\documentclass[11pt]{article}
\usepackage[english]{babel}
\usepackage{amssymb, amsmath, amsthm,mathrsfs} 
\usepackage[a4paper, centering]{geometry}
\usepackage{bbm,amsmath,amsthm,epsfig,latexsym,marvosym, esint}
\usepackage{amsfonts}
\usepackage{bigints}
\usepackage{amsmath}
\usepackage{bbm}
\usepackage{amssymb}
\usepackage{enumerate}
\usepackage{ esint }
\usepackage[pdftex,breaklinks,colorlinks, linkcolor=blue]{hyperref}
\usepackage{color}
\usepackage[font={small,up}]{caption}
\newcommand{\res}{\mathop{\hbox{\vrule height 7pt width .5pt depth 0pt
\vrule height .5pt width 6pt depth 0pt}}\nolimits}

\numberwithin{equation}{section}

\usepackage{amsthm}
\makeatletter
\def\th@plain{%
  \thm@notefont{}
  \itshape 
}
\def\th@definition{%
  \thm@notefont{}
  \normalfont 
}
\makeatother
\newtheorem{thm}{Theorem}[section]
\newtheorem{prop}[thm]{Proposition}
\newtheorem{lem}[thm]{Lemma}

\newtheorem{remark}[thm]{Remark}

\theoremstyle{definition}
\newtheorem{defn}[thm]{Definition}

\newtheorem{oss}[thm]{Remark}

\newcommand{\Z}{\mathbb{Z}}

\newcommand{\N}{\mathbb{N}}
\newcommand{\R}{\mathbb{R}}

\newcommand{\B}{\mathcal{B}}
\newcommand{\A}{\mathcal{A}}
\newcommand{\mS}{\mathbb{S}}
\renewcommand{\epsilon}{\varepsilon}

\newcommand{\ssubset}{\subset \! \subset}
\renewcommand{\vec}{\mathbf}    
\newcommand{\vecg}{\boldsymbol}
\newcommand{\Rnn}{\mathbb{R}^{n \times n}}
\newcommand{\vn}{\vec{n}}
\newcommand{\dd}{\mathrm{d}}
\newcommand{\La}{\mathcal{L}}
\newcommand{\rhotilde}{\tilde{\vecg\rho}}
\newcommand{\weakcs}{\overset{*}{\rightharpoonup}}

\newcommand{\weakc}{\rightharpoonup}
\newcommand{\weakly}{\rightharpoonup}
\newcommand{\hvpe}{\widetilde{\vecg\varphi}_\eta}

\DeclareMathOperator{\Det}{Det}

\DeclareMathOperator{\dist}{dist}

\DeclareMathOperator{\cof}{cof}
\DeclareMathOperator{\adj}{adj}

\DeclareMathOperator{\imT}{im_{T}}
\DeclareMathOperator{\imG}{im_{G}}

\DeclareMathOperator{\mec}{mec}
\DeclareMathOperator{\nem}{nem}

\DeclareMathOperator{\ess}{ess}
\DeclareMathOperator{\osc}{osc}

\def\e{\epsilon}
\usepackage{enumitem}
\setlist[description]{nosep}

\makeindex

\title{Relaxation of nonlinear elastic energies related to Orlicz-Sobolev nematic elastomers}

\author{Giovanni Scilla and Bianca Stroffolini }
\newcommand{\Addresses}{{
  \bigskip
  \footnotesize

G.~Scilla, \textsc{Dipartimento di Matematica ed Applicazioni ``R. Caccioppoli'', Universit\`{a} di Napoli Federico II, Via Cintia Monte Sant'Angelo, 80126 Napoli, Italy}\par\nopagebreak
  \textit{E-mail address}, G.~Scilla: \texttt{giovanni.scilla@unina.it}

\medskip

B.~Stroffolini, \textsc{Dipartimento di Ingegneria Elettrica e delle Tecnologie dell'Informazione, Universit\`{a} di Napoli Federico II, Via Claudio, 80125 Napoli, Italy}\par\nopagebreak
  \textit{E-mail address}, B.~Stroffolini: \texttt{bstroffo@unina.it}
}}
\date{}
\begin{document}

\maketitle

\begin{abstract} 
We compute the relaxation of the total energy related to a variational model for nematic elastomers, involving a nonlinear elastic mechanical energy depending on the orientation of the molecules of the nematic elastomer, and a nematic Oseen--Frank energy in the deformed configuration. The main assumptions are that the quasiconvexification of the mechanical term is polyconvex and that the deformation belongs to an Orlicz-Sobolev space with an integrability just above the space dimension minus one, and does not present cavitation. We benefit from the fine properties of orientation-preserving maps satisfying that regularity requirement proven in \cite{HS} and extend the result of \cite{MCOl} to Orlicz spaces with a suitable growth condition at infinity.
\end{abstract}
\noindent
{\bf Keywords:} nematic elastomers, Orlicz-Sobolev spaces, relaxation, nonlinear elasticity, orientation-preserving maps.\\
\\
{\bf AMS Classification:} 49M20, 49J45, 46E30, 74B20.

\tableofcontents

\section{Introduction}

The longstanding problem in nonlinear elasticity about the formation of cavities has been proposed in the keystone paper of Ball \& Murat~\cite{BaMu84}.
Subsequently, M\"uller \& Spector~\cite{MuSp95} investigated with many examples what are the conditions to impose in order to prevent cavities.
They introduced a topological condition: \lq INV \rq , formulated in terms of degree for maps in $W^{1,p}(\Omega, \mathbb{R}^n), p>n$. Later, Conti \& De Lellis~\cite{CoDeL} were relaxing this condition to map in $W^{1,n-1}$ obtaining some partial results.
Henao \& Mora Corral extensively studied Lusin properties and local invertibility,  \cite{HM10, HM11, HM12, HM15}.
Later on, Barchiesi, Henao \& Mora-Corral~\cite{HBMC}, using these  latter results, were able to  characterize the class of orientation preserving  maps in $W^{1,p}, p>n-1$ for which the cavities do not occur. 
This conditions are related to distributional Jacobian,  surface measure and degree. 

 In a previous paper \cite{HS} the second author with Henao proved that many properties of orientation preserving maps, such as local invertibility and a.e. differentiability, can be pushed to a special class of Orlicz-Sobolev spaces, with an integrability exponent just above the space dimension minus one, in the logarithmic scale. 

This kind of generalization could have not only a mathematical interest per se but it is also related to questions of integrability of Jacobian determinants and mappings of finite distortion {(see, e.g., \cite{HM, HK, IM,V,GISS})}.

The drawback of this generalization is an existence theorem for models of magnetic elastomers, liquid crystals and magnetoelasticity, see, e.g., \cite{BDesimone, calderer, kruzik}.  The  existence theorems were proved in the scale of Sobolev spaces with $p>n-1$  in \cite{HBMC}.
Both the theorems were provided assuming polyconvexity in the mechanic energy and quadratic growth in the deformed configuration (nematic). \par
The scope of this  article is the study of the magnetoelastic model without any policonvexity or quasiconvexity assumption. This gives rise to finding the quasiconvex envelope.
However, it can be shown that the two energies could be treated separately:
the quasiconvex envelope is the sum of the two envelopes: the quasiconvex for the mechanical, the tangential quasiconvexification for the nematic term (see \cite{MCOl} for the case $t^p, p>n-1$).\\
\\
\noindent
{\bf Main results:} Let $A(t)$ be a fixed $N$-function such that $A(t)\sim t^{n-1}\log^\alpha (e+t)$ for some ${\alpha\in(n-2,n-1)}$, and denote by $W^{1,A}(\Omega,\R^n)$ the corresponding Orlicz-Sobolev space. The energy $I(\vec u, \vn)$ associated to the deformation $\vec u$ and the director $\vn$ is the sum of two terms: (a) the mechanical energy of the deformation $I_{\mec}(\vec u, \vn)$, of the form
\begin{equation*}
\displaystyle \int_{\Omega} W(D\vec u(\vec x),\vn(\vec u(\vec x)))\, \dd \vec x,
\end{equation*} 
where the potential $W$ is assumed to comply with an Orlicz-growth condition (with respect to the deformation) as 
\begin{equation}
|W(\vec F, \vec n)| \leq c_W \left(A(\| \vec F \|) + h(\det \vec F) + 1 \right), \qquad \vec F \in \Rnn_+ , \quad \vn \in \mS^{n-1}
\label{condWintro} 
\end{equation} 
for a suitable convex function $h$, and an equicontinuity property with respect to $\vn$. The domain of $I_{\mec}$ is the class $\mathcal{A}_{{\beta}}(\Omega)$ of the admissible deformations, consisting of those maps in $W^{1,A}(\Omega,\R^n)$ which are orientation preserving and such that no cavitation occurs (see Definition~\ref{admissible}) { with a supplementary integrability condition for the cofactor (see Section~\ref{sec:cut})}; (b) the nematic energy in the deformed configuration $I_{\nem}(\vec u, \vn)$, given by
\begin{equation*}
\displaystyle \int_{\imT(\vec u,\Omega)} V(\vn(\vec y), D\vn(\vec y)) \, \dd \vec y,
\end{equation*}
where $\imT(\vec u,\Omega)$ denotes the \emph{topological image} of $\Omega$ by $\vec u$ (see Definition~\ref{de:imTomega}) and $|V(\vec z, \vecg\xi)|\leq C(1+A(\|\vecg\xi\|))$ for every $\vec z\in\mS^{n-1}$, $\vecg\xi \in (T_{\vec z} \mS^{n-1})^n$, $T_{\vec z} \mS^{n-1}$ being the tangent space at $\vec z$ to $\mS^{n-1}$.

Under the additional assumption that $W^{qc}$ - the quasiconvexification of $W$ in the first variable - is polyconvex, with Theorem~\ref{relaxation} we prove that the relaxation on $L^1 (\Omega, \R^n) \times L^1 (\R^n, \R^n)$ of the functional
\begin{equation}
\begin{split}
I(\vec u, \vn):&= I_{\mec}(\vec u, \vn) + I_{\nem}(\vec u, \vn)\\
                   &=\displaystyle \int_{\Omega} W(D\vec u(\vec x),\vn(\vec u(\vec x)))\, \dd \vec x +  \displaystyle \int_{\imT(\vec u,\Omega)} V(\vn(\vec y), D\vn(\vec y)) \, \dd \vec y
\end{split}
\end{equation}
is the energy given by the sum of the relaxed energies $I^*_{\mec}$ and $I^*_{\nem}$; namely,
\begin{equation}
\begin{split}
I^*(\vec u, \vn):&= I^*_{\mec}(\vec u, \vn) + I^*_{\nem}(\vec u, \vn)\\
                   &=\displaystyle \int_{\Omega} W^{qc}(D\vec u(\vec x),\vn(\vec u(\vec x)))\, \dd \vec x +  \displaystyle \int_{\imT(\vec u,\Omega)} V^{tqc}(\vn(\vec y), D\vn(\vec y)) \, \dd \vec y,
\end{split}
\end{equation} 
where $V^{tqc}$ is the tangential quasiconvexification of $V$ (see Definition~\ref{defn:tangq}).

The lower bound is an immediate consequence of the lower semicontinuity result for $I$ with respect to the strong topology of $L^1$ provided by Proposition~\ref{prop:lowersemicont}, which in turn relies on the lower semicontinuity properties of $I_{\mec}$ and $I_{\nem}$. As for the upper bound, with Proposition~\ref{prop:limsup} we exhibit the construction of a mutual recovery sequence $\{(\vec u_j, \vn_j)\}$ providing a limsup inequality for both the mechanical term and the nematic term separately. Our argument provides an extension to the logarithmic scale of Orlicz-Sobolev spaces of the approach by Conti-Dolzmann \cite{CoDo15} and Mora Corral-Oliva \cite{MCOl}.\\
\\
\noindent
{\bf Overview of the paper:} The paper is organized as follows. In Section~\ref{notation} we fix the main notation which will be used throughout the paper. Section~\ref{orlicz} collects some basic definitions and results in Convex Analysis, concerning $N$-functions and the Orlicz-Sobolev spaces. Then, with Section~\ref{prelim}, we recall basic definitions in geometric measure theory, necessary in order to tackle the analysis of the energy functionals in the deformed configuration. In particular, we recall the notions of geometric image (Definition~\ref{de:O0}),  the concept of topological degree in Orlicz-Sobolev maps and define the topological image of a set (Section~\ref{sec:degtop}), \cite{HS}. The class of admissible deformations $\mathcal{A}(\Omega)$ is introduced in Section~\ref{sec:admissible}, where we recall also their fine properties (Section~\ref{sec:fine}); in particular, openness and local invertibility, and we investigate their stability under composition with Lipschitz functions, useful for the change of variables (Section~\ref{sec:cut}). {Here we introduce the subclass  $\mathcal{A}_{\beta}(\Omega)$.} In Section~\ref{complow} we state the main results of compactness, lower semicontinuity (Proposition~\ref{prop:lowersemicont}) and existence of minimizers (Theorem~\ref{th:existence}) for the functionals defined in the deformed configuration. Finally, in order to obtain the main relaxation theorem (Theorem~\ref{relaxation}), we provide the construction of a recovery sequence (Theorem~\ref{recseq}) in the spirit of Conti-Dolzmann's approach (Lemma~\ref{approxubyz} and Proposition~\ref{prop:limsup}). There, an extension to the Orlicz-Sobolev setting of the concepts of tangential quasiconvexity and the corresponding results of lower semicontinuity and relaxation {come} into play (Theorem~\ref{th:tqc}).

\section{Notation}\label{notation}

In this section we fix the notation and introduce some definitions used in the paper.

Let $n\geq3$. In all the paper, $\Omega$ will be a non-empty open, bounded set of $\R^n$, which represents the body in its reference configuration. There, the coordinates will be denoted by $\vec x$, while in the deformed configuration by $\vec y$. Vector-valued and matrix-valued functions will be written in boldface. The closure of a set $A$ is denoted by $\bar{A}$ and its topological boundary by $\partial A$. Given two sets $U,V$ of $\R^n$, we will write $U \ssubset V$ if $U$ is bounded and $\bar{U} \subset V$. The open ball of radius $r>0$ centred at $\vec x \in \R^n$ is denoted by $B(\vec x, r)$, while $\overline{B(\vec x, r)}$ stands for its closure; when $\vec x=\vec 0$, we will simply write $B_r$ and $\overline{B_r}$, respectively. The $(n-1)$-dimensional sphere in $\R^n$ centred at $\vec x_0$, with radius $r$, is denoted by $S(\vec x_0; r)$ or $S_r(\vec x_0)$.
Given a square matrix $\vec M \in \Rnn$, its determinant is denoted by $\det \vec M$.
The adjugate matrix $\adj \vec M \in \Rnn$ satisfies $(\det \vec M) \vec I = \vec M \adj \vec M$, where $\vec I$ denotes the identity matrix.
The transpose of $\adj \vec M$ is the cofactor $\cof \vec M$.
If $\vec M$ is invertible, its inverse is denoted by $\vec M^{-1}$.
The inner product of vectors and of matrices will be denoted by $\cdot$ and their associated norms are denoted by $\left\| \cdot \right\|$.
Given $\vec a, \vec b \in \R^n$, the tensor product $\vec a \otimes \vec b$ is the $n \times n$ matrix whose component $(i,j)$ is $a_i \, b_j$.
The set $\Rnn_+$ denotes the subset of matrices in $\Rnn$ with positive determinant. 
The set $\mS^{n-1}$ denotes the unit sphere in $\R^n$. The identity function in $\R^n$ is denoted by $\vec {id}$.

The Lebesgue measure in $\R^n$ is denoted by $\left| \cdot \right|$ or $\La^n$, and the $(n-1)$-dimensional Hausdorff measure by $\mathcal{H}^{n-1}$. The abbreviation \emph{a.e.} stands for \emph{almost everywhere} or \emph{almost every}; unless otherwise stated, it refers to $\La^n$.
For $\Phi$ a Young function,  $L^\Phi$ denotes the corresponding Orlicz space and $W^{1,\Phi}, W^{1,\Phi}_0$ the Orlicz-Sobolev spaces  (see Section~\ref{orlicz} for the precise definitions).
The symbols $C^1_c$ and $C^\infty_c$ stand for the spaces of $C^1$ and $C^\infty$ functions, respectively, with compact support.
The derivative of a Sobolev-Orlicz or a smooth vector-valued function $\vec u$ is written $D \vec u$. The set of (positive or vector-valued) Radon measures is denoted by $\mathcal{M}$, while $BV$ is the space of functions with bounded variation.

The strong convergence in $L^\Phi$ or $W^{1,\Phi}$ and the a.e. convergence are denoted by $\to$, while the symbol for the weak convergence is $\weakc$, that for the weak$^*$ convergence in $L^{\infty}$ is $\weakcs$. Given a measurable set $A$, the symbol $\fint_A \vec u(\vec x)\,\dd\vec x$ denotes the average value of $\vec u$ on $A$.

\section{Some basic facts on Orlicz-Sobolev spaces}\label{orlicz}

We recall here few basic definitions and results concerning $N$-functions and Orlicz-Sobolev spaces, useful in the sequel. For a detailed treatment of the topic, we refer to \cite{Kras, Kufn, Bennett, Adams}. \\

An $N$-function $A$ is a convex function from $[0,\infty)$ to $[0,\infty)$ which vanishes only at 0 and such that 
$$\lim_{s\to 0^+}\frac{A(s)}{s}=0 \,\quad, \, \quad  \lim_{s\to\infty} \frac{A(s)}{s}=\infty.$$
 If $A$ is an $N$-function, then we denote by $A^*$ the \emph{Young-Fenchel-Yosida} dual or conjugate transform of $A$; namely, the $N$-function defined as
\begin{equation}
A^*(s):=\sup\{sr-A(r):\,0<r<+\infty\}.
\end{equation}

In this paper, we restrict our analysis to functions $A$ whose growth at infinity is at least such that
 \begin{equation} \label{eq:L2logL}
 \int_{t_0}^\infty \left ( \frac{t}{A(t)}\right)^\frac{1}{n-2}\mathrm{d}t < \infty.
 \end{equation}
for some $t_0\geq 0.$
The condition is satisfied, in particular, when $A(t)=t^{n-1}\log^\alpha (e+t)$ for every $\alpha>n-2$.

An $N$-function $A$ is said to satisfy the \emph{$\Delta_2$-condition near infinity} if it is finite-valued and there exist a constant $\mu>2$ and $t_0>0$ such that
 \begin{equation}
  A(2t)\leq \mu A(t)\quad \text{for}\ t\geq t_0.
\label{delta2}
 \end{equation}
If \eqref{delta2} holds for every $t>0$, we say that $A$ satisfies the \emph{$\Delta_2$-condition globally}.

\begin{remark}\label{delta22}
 We notice that our function $A(t)=t^{n-1}\log^\alpha (e+t)$ for every $\alpha>n-2$ verifies the $\Delta_2$ condition together with its conjugate. {We will also been dealing with the function $B(t)=t\log^\beta (e+t)$ for a $\beta>0$ (see Section~\ref{sec:admissible}):} this function verifies the $\Delta_2$-condition globally. It is worth noting that its conjugate $B^*$ is equivalent to $C(t)=e^{t^{1/\beta}}-1$ (see the remarks below Theorem~\ref{inclusion}) that, instead, does not satisfy $\Delta_2$-condition at infinity, since it holds that $C(2t)>2C(t)$ definitely for $t>0$. 
 \end{remark}
 An equivalent property to \eqref{delta2}, very useful in the computations, is the following: for every constant $\gamma>1$, there exists a constant $c_{\mu,\gamma}>0$ such that
\begin{equation}
A(\gamma t) \leq c_{\mu,\gamma} A(t)\quad \text{for}\ t\geq t_0.
\end{equation}

Let $\Omega$ be a measurable subset of $\R^n$. The Orlicz space $L^A(\Omega)$ built upon a Young function $A$ is the Banach function space of those real-valued measurable functions $u$ on $\Omega$ for which the Luxemburg norm
\begin{equation}
\|u\|_{L^A(\Omega)}:=\inf\left\{\lambda>0:\, \int_\Omega A\left(\frac{|u(\vec x)|}{\lambda}\right)\,\mathrm{d}\vec x\leq1\right\}
\end{equation}
is finite. 

Since $A$ is non-decreasing,
 \begin{align}
  \int_\Omega A(|u(\vec x)|)\dd\vec x <\infty \ \Rightarrow\ \|u\|_{L^A(\Omega)}\leq 1.
 \end{align}
If $A$ satisfies the $\Delta_2$-condition at infinity then 
 \begin{align} \label{eq:slicing1}
   u\in L^A(\Omega)\ \Leftrightarrow\ \int_\Omega A(|u(\vec x)|)\dd\vec x <\infty.
 \end{align}
 
 \begin{prop}[generalized H\"older inequality] Let $A$ be an $N$-function and $A^*$ its dual. Then it holds that
\begin{equation}
\left|\int_\Omega u(\vec x)v(\vec x)\,\mathrm{d}\vec x\right|\leq 2 \|u\|_{L^A(\Omega)}\|v\|_{L^{A^*}(\Omega)},
\end{equation}
for every $u\in L^A(\Omega)$ and $v\in L^{A^*}(\Omega)$.
\end{prop}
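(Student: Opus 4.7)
The plan is to reduce everything to the pointwise Young inequality associated with the pair $(A, A^*)$, which is essentially a restatement of the Legendre/Young-Fenchel duality already introduced. Specifically, I would first record that for every $a,b\geq 0$ one has $ab\leq A(a)+A^*(b)$; this is immediate from the definition $A^*(b)=\sup_{r>0}\{br-A(r)\}$, by taking $r=a$.

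Next, I would normalize. If $\|u\|_{L^A(\Omega)}=0$ or $\|v\|_{L^{A^*}(\Omega)}=0$ the inequality is trivial, so assume both are strictly positive, pick $\lambda>\|u\|_{L^A(\Omega)}$ and $\mu>\|v\|_{L^{A^*}(\Omega)}$, and use the definition of the Luxemburg norm to obtain
\begin{equation*}
\int_\Omega A\!\left(\frac{|u(\vec x)|}{\lambda}\right)\mathrm d\vec x\leq 1,\qquad \int_\Omega A^*\!\left(\frac{|v(\vec x)|}{\mu}\right)\mathrm d\vec x\leq 1.
\end{equation*}
Applying the pointwise Young inequality with $a=|u(\vec x)|/\lambda$ and $b=|v(\vec x)|/\mu$ and integrating over $\Omega$ yields
\begin{equation*}
\frac{1}{\lambda\mu}\int_\Omega |u(\vec x)v(\vec x)|\,\mathrm d\vec x\leq \int_\Omega A\!\left(\frac{|u(\vec x)|}{\lambda}\right)\mathrm d\vec x + \int_\Omega A^*\!\left(\frac{|v(\vec x)|}{\mu}\right)\mathrm d\vec x\leq 2,
\end{equation*}
so that $\int_\Omega |uv|\,\mathrm d\vec x\leq 2\lambda\mu$. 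Letting $\lambda\downarrow\|u\|_{L^A(\Omega)}$ and $\mu\downarrow\|v\|_{L^{A^*}(\Omega)}$ and using the obvious bound $|\int_\Omega uv|\leq \int_\Omega|uv|$ gives the claim.

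There is really no serious obstacle: the only point worth a moment's care is the passage to the limit at the end, which is immediate by the monotone behaviour of the integrals in $\lambda,\mu$; one does not need any $\Delta_2$ assumption or any particular regularity on $\Omega$ here, which is why this statement can be placed at the very beginning of the Orlicz preliminaries and used freely throughout the rest of the paper.
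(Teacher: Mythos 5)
Your argument is correct: the pointwise Young inequality $ab\leq A(a)+A^*(b)$, the normalization via the Luxemburg norm with $\lambda>\|u\|_{L^A(\Omega)}$ and $\mu>\|v\|_{L^{A^*}(\Omega)}$, and the final limit are all handled properly, and you rightly note that no $\Delta_2$ condition is needed. The paper states this proposition without proof (referring to the standard Orlicz-space references), and what you have written is precisely the classical textbook argument that those references give, so there is nothing to compare beyond noting full agreement.
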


Note that we may introduce another norm on $L^A(\Omega)$, the \emph{Orlicz norm} or \emph{dual norm}, defined as
\begin{equation}
|u|_A:=\sup\left\{\int_\Omega u(\vec x)v(\vec x)\,\mathrm{d}\vec x:\, v\in L^{A^*}(\Omega), \|v\|_{L^{A^*}(\Omega)}\leq1\right\}.
\end{equation}
The norms $\|\cdot\|_{L^A(\Omega)}$ and $|\cdot|_A$ are equivalent, since it holds that
\begin{equation}
\|u\|_{L^A(\Omega)}\leq|u|_A\leq 2\|u\|_{L^A(\Omega)}, \quad u\in L^A(\Omega).
\end{equation}

We denote by $E^A(\Omega)$ the closure of all bounded measurable functions defined on $\Omega$ with respect to the norm $\|\cdot\|_{L^A(\Omega)}$. Now we remark that the $\Delta_2$-condition comes into play for separability and reflexivity:
\begin{prop}  $A$ satisfies $\Delta_2$-condition iff $E^A(\Omega)=L^A(\Omega)$.
\end{prop}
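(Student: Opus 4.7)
My plan is to exploit the standard characterization that $u\in E^A(\Omega)$ if and only if
\begin{equation*}
\int_\Omega A(|u(\vec x)|/\lambda)\,\dd \vec x<\infty \quad \text{for every } \lambda>0,
\end{equation*}
which reduces the proposition to asking whether finiteness of the modular at some $\lambda>0$ propagates to all $\lambda>0$. The forward implication of this characterization follows by dominated convergence on the truncations $u_k:=u\chi_{\{|u|\le k\}}$, since $A(|u-u_k|/\lambda)\le A(|u|/\lambda)\in L^1(\Omega)$ and $|u-u_k|\to 0$ a.e.; the reverse follows from convexity: choosing bounded $v_n$ with $\|u-v_n\|_{L^A}<\lambda/2$ one has $\int A(2|u-v_n|/\lambda)\le 1$, and $A(|u|/\lambda)\le \tfrac12 A(2|u-v_n|/\lambda)+\tfrac12 A(2\|v_n\|_\infty/\lambda)$ gives the conclusion using $|\Omega|<\infty$.

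For the $(\Leftarrow)$ direction of the proposition, I take $u\in L^A(\Omega)$ under the $\Delta_2$ assumption and apply \eqref{eq:slicing1} to get $\int_\Omega A(|u|)\,\dd\vec x<\infty$. Iterating $A(2t)\le \mu A(t)$ for $t\ge t_0$ and handling $[0,t_0]$ by a constant yields, for every $\lambda>0$, a bound $A(t/\lambda)\le C_\lambda A(t)+D_\lambda$ valid for all $t\ge 0$. Since $|\Omega|<\infty$, this gives $\int_\Omega A(|u|/\lambda)\,\dd\vec x<\infty$ for every $\lambda>0$, whence $u\in E^A(\Omega)$ by the characterization above.

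For the $(\Rightarrow)$ direction I argue by contrapositive. Assuming $\Delta_2$ fails at infinity, for each $k$ I pick $t_k\ge k$ with $A(2t_k)\ge 2^k A(t_k)$ (using the failure of \eqref{delta2} with $\mu=2^k$ and $t_0=k$), so $t_k\uparrow\infty$. I then select pairwise disjoint measurable $E_k\subset\Omega$ with $|E_k|=2^{-k}/A(t_k)$ (feasible up to discarding finitely many $k$, since $A(t_k)\to\infty$ makes $\sum_k|E_k|$ summable and bounded by $|\Omega|$) and set $u:=\sum_k t_k\chi_{E_k}$. Then $\int_\Omega A(|u|)\,\dd\vec x=\sum_k 2^{-k}<\infty$, so $u\in L^A(\Omega)$, while $\int_\Omega A(2|u|)\,\dd\vec x\ge\sum_k 2^{-k}\cdot 2^k=\infty$, so the modular at $\lambda=1/2$ is infinite and $u\notin E^A(\Omega)$. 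The main obstacle here is precisely to engineer a single $u$ whose modular is finite at scale $1$ yet infinite at scale $1/2$: the quantitative boost $A(2t_k)\ge 2^k A(t_k)$ is what makes these two competing requirements compatible within one construction.
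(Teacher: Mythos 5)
Your proof is correct. Note first that the paper offers no proof of this proposition at all: it is stated as a classical fact of Orlicz-space theory (Krasnosel'ski\u{\i}--Ruticki\u{\i}, Kufner--John--Fu\v{c}\'{\i}k), so there is no ``paper approach'' to compare against; what you have written is essentially the standard textbook argument, and it holds together. The reduction to the modular characterization $u\in E^A(\Omega)\Leftrightarrow\int_\Omega A(|u|/\lambda)\,\dd\vec x<\infty$ for all $\lambda>0$ is the right pivot, and both halves of that lemma are proved correctly (though your labels ``forward'' and ``reverse'' are swapped: the truncation/dominated-convergence argument proves that finiteness of all modulars implies membership in $E^A$, while the convexity argument with the approximants $v_n$ proves the converse). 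The $\Delta_2\Rightarrow E^A=L^A$ direction via the iterated doubling bound $A(t/\lambda)\le C_\lambda A(t)+D_\lambda$ is fine, and your contrapositive construction $u=\sum_k t_k\chi_{E_k}$ with $A(2t_k)\ge 2^kA(t_k)$ and $|E_k|=2^{-k}/A(t_k)$ is the classical counterexample; it uses the nonatomicity of Lebesgue measure on the open set $\Omega$, which is available here. Two cosmetic points: since the paper's definition \eqref{delta2} requires $\mu>2$, your choice $\mu=2^k$ should start from $k\ge 2$; and since $\Omega$ is bounded, the sharp statement is that $E^A=L^A$ is equivalent to the $\Delta_2$-condition \emph{near infinity} --- your argument correctly negates exactly that version, and your positive direction only uses it near infinity (absorbing $[0,t_0]$ into the constant $D_\lambda$ via $|\Omega|<\infty$), so the proof is consistent with the form of the condition actually relevant for finite-measure domains.
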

 If $A$ does not satisfy $\Delta_2$-condition, it turns out that
\begin{equation*}
E^A(\Omega)\subsetneq L^A(\Omega),
\end{equation*}
Moreover, $E^A(\Omega)$ is separable and $C^\infty_0(\Omega)$ is dense in $E^A(\Omega)$.\\

The Orlicz space $L^A(\Omega,\R^n)$ of vector-valued measurable functions on $\Omega$ is defined as $L^A(\Omega,\R^n)=(L^A(\Omega))^n$, and is equipped with the norm $\|\vec u\|_{L^A(\Omega,\R^n)}=\|\,\|\vec u\|\,\|_{L^A(\Omega)}$ for $\vec u\in L^A(\Omega,\R^n)$. The Orlicz space $L^A(\Omega,\Rnn)$ of matrix-valued measurable functions on $\Omega$ can be defined analogously.

We denote by $W^{1,A}(\Omega)$ the Orlicz-Sobolev space defined by
\begin{equation*}
W^{1,A}(\Omega):=\{u\in L^A(\Omega):\, \mbox{ $u$ is weakly differentiable and } Du\in L^A(\Omega,\R^n)\}.
\end{equation*}

The space $W^{1,A}(\Omega)$, equipped with the norm
\begin{equation*}
\|u\|_{W^{1,A}(\Omega)}:=\|u\|_{L^A(\Omega)}+\|Du\|_{L^A(\Omega,\R^n)}
\end{equation*}
is a Banach space. 

The space $WE^{1,A}(\Omega)$ is defined analogously, by replacing the space $L^A(\Omega)$ with $E^A(\Omega)$. The space $W^{1,A}_0(\Omega)$ is the closure of $C_c^\infty(\Omega)$ in the $W^{1,A}$ norm.

The Orlicz space $W^{1,A}(\Omega,\R^n)$ of vector-valued measurable functions on $\Omega$ is defined as $W^{1,A}(\Omega,\R^n)=(W^{1,A}(\Omega))^n$, and is equipped with the norm $\|\vec u\|_{W^{1,A}(\Omega,\R^n)}=\|\vec u\|_{L^A(\Omega,\R^n)}+\|D\vec u\|_{L^A(\Omega,\Rnn)}$ for $\vec u\in W^{1,A}(\Omega,\R^n)$. The analogous spaces for matrix-valued functions are defined in the same way.\\

We now introduce a notion of ordering for Young functions (see, e.g., \cite[Definition~3.5.6]{Kufn}).

Let $A,B$ be Young functions. $A$ is said to \emph{dominate} $B$, and we write $B\prec A$, if there exists a positive constant $c_0$ such that
\begin{equation}
B(t)\leq A(c_0t),\quad \mbox{for every $t>0$.}
\label{dom}
\end{equation} 
As customary, if there exists also $t_0>0$ such that \eqref{dom} holds for every $t\geq t_0$, we say that $A$ \emph{dominates} $B$ \emph{near infinity}. If $A\prec B$ and $B\prec A$, the functions $A$ and $B$ are said to be \emph{equivalent}, and we write $A\sim B$.

The following result holds (see, e.g., \cite[Theorem~3.17.1]{Kufn} and subsequent remarks).

\begin{thm}\label{inclusion}
Let $A, B$ be Young functions. Then we have:\\
(i) $L^A(\Omega)\subseteq L^B(\Omega)$ if and only if $B\prec A$;\\
(ii) $L^A(\Omega) = L^B(\Omega)$ if and only if $B\sim A$.
\end{thm}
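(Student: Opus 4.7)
\medskip

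\textbf{Proof plan for Theorem~\ref{inclusion}.} Part (ii) follows at once from part (i) applied in both directions, so the plan focuses on (i). The forward implication ``$B\prec A \Rightarrow L^A(\Omega)\subseteq L^B(\Omega)$'' is the soft direction and is the first thing I would settle. Given $u\in L^A(\Omega)$, pick $\lambda>\|u\|_{L^A(\Omega)}$ so that $\int_\Omega A(|u|/\lambda)\,\dd\vec x\leq 1$. Using monotonicity of $B$ together with the domination inequality $B(t)\leq A(c_0 t)$, I estimate
\begin{equation*}
\int_\Omega B\!\left(\frac{|u(\vec x)|}{c_0\lambda}\right)\dd\vec x\,\leq\, \int_\Omega A\!\left(\frac{|u(\vec x)|}{\lambda}\right)\dd\vec x\,\leq\, 1,
\end{equation*}
which immediately gives $\|u\|_{L^B(\Omega)}\leq c_0\lambda$, hence $u\in L^B(\Omega)$. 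Letting $\lambda\downarrow \|u\|_{L^A(\Omega)}$ shows that the inclusion is continuous with operator norm at most $c_0$.

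The converse ``$L^A(\Omega)\subseteq L^B(\Omega)\Rightarrow B\prec A$'' is the substantive part. The first step is to upgrade the set-theoretic inclusion to a continuous embedding. Both $(L^A,\|\cdot\|_{L^A})$ and $(L^B,\|\cdot\|_{L^B})$ are Banach spaces, and norm convergence in either implies convergence in measure on any bounded subset of $\Omega$; hence if $u_k\to u$ in $L^A$ and $u_k\to v$ in $L^B$, passing to a pointwise a.e.\ subsequence forces $u=v$. The closed graph theorem then yields a constant $K>0$ with
\begin{equation*}
\|u\|_{L^B(\Omega)}\,\leq\, K\,\|u\|_{L^A(\Omega)}\qquad\text{for every } u\in L^A(\Omega).
\end{equation*}

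The second step is to test this inequality on the family of functions $u_{t,E}=t\,\chi_E$ with $t>0$ and $E\subseteq\Omega$ measurable of positive measure $s=|E|$. A direct computation from the definition of the Luxemburg norm gives
\begin{equation*}
\|u_{t,E}\|_{L^A(\Omega)}\,=\,\frac{t}{A^{-1}(1/s)},\qquad \|u_{t,E}\|_{L^B(\Omega)}\,=\,\frac{t}{B^{-1}(1/s)},
\end{equation*}
so the continuity inequality translates into $A^{-1}(1/s)\leq K\, B^{-1}(1/s)$. Since $\Omega$ has positive Lebesgue measure, the values $1/s$ attained by $s=|E|$ with $E\subseteq\Omega$ form a dense subset of $(1/|\Omega|,+\infty)$; by continuity of the (generalized) inverses of the Young functions $A$ and $B$, this inequality extends to all sufficiently large arguments. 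Setting $r=B^{-1}(1/s)$ and using $A(A^{-1}(\sigma))\leq \sigma$ together with monotonicity then gives $B(r)\leq A(Kr)$ for every $r$ large enough, which is precisely $B\prec A$ near infinity; on compact sub-intervals of $(0,\infty)$ one gets the bound by the boundedness of the quotient of the two continuous Young functions away from zero, and a possible increase of the constant $c_0$ handles small $t$, yielding the full domination $B(t)\leq A(c_0 t)$ for all $t>0$.

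\textbf{Main obstacle.} The delicate point is the converse direction: the reduction from a mere set inclusion to a quantitative domination requires first invoking the closed graph theorem (with the measure-convergence argument to check closedness of the graph) and then the characteristic-function test, together with a careful bookkeeping of the values of $s=|E|$ that $\Omega$ can actually realise. Everything else, including the passage from (i) to (ii), is routine.
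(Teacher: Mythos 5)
The paper does not actually prove this statement: it is quoted from \cite[Theorem~3.17.1]{Kufn} ("and subsequent remarks"), so there is no in-paper argument to compare against. Your plan is the standard textbook proof (direct estimate for the soft direction; closed graph theorem plus the characteristic-function test for the converse), and most of it is sound: the identity $\|t\chi_E\|_{L^A}=t/A^{-1}(1/|E|)$ is correct, the closed-graph step is legitimately justified via convergence in measure, and the reduction of (ii) to (i) is immediate. One cosmetic slip: to convert $A^{-1}(1/s)\leq K\,B^{-1}(1/s)$ into $B(r)\leq A(Kr)$ you need $A(A^{-1}(\sigma))\geq\sigma$, not $\leq\sigma$; since an $N$-function is a continuous, strictly increasing bijection of $[0,\infty)$ onto itself this is in fact an equality, so nothing is lost.

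The genuine gap is the final step of the converse, where you pass from $B(r)\leq A(Kr)$ ``for all $r$ large enough'' to the global domination $B(t)\leq A(c_0t)$ for all $t>0$ by ``a possible increase of the constant''. When $|\Omega|<\infty$ this is false: with $A(t)=t^2$ and $B(t)=t^{3/2}$ on a bounded domain one has $L^A=L^2\subseteq L^{3/2}=L^B$, yet $t^{3/2}\leq A(c_0t)=c_0^2t^2$ fails as $t\to0$ for every $c_0$; your compact-subinterval argument only covers $[\delta,t_0]$ with $\delta>0$ fixed and cannot reach $t=0$. What your characteristic-function test actually proves in the finite-measure case is domination \emph{near infinity}, which is precisely the content of the paper's remark immediately following the theorem (``If $\Omega$ has finite measure, the last condition could be replaced by $B\sim A$ near infinity''). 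When $|\Omega|=\infty$, on the other hand, $s=|E|$ ranges over all of $(0,\infty)$, so the test already yields $B(r)\leq A(Kr)$ for every $r>0$ and no extension step is needed. You should therefore split the converse according to whether $|\Omega|$ is finite, conclude near-infinity domination in the first case and global domination in the second, and delete the small-$t$ patch.
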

 If $\Omega$ has finite measure, the last  condition could be replaced by $B\sim A$ near infinity.
In case $(i)$, it can be seen (e.g., \cite[Theorem~13.3]{Kras}) that there exists a constant $c>0$ such that $$\|u\|_{L^B(\Omega)}\leq c \|u\|_{L^A(\Omega)},\quad u\in L^A(\Omega).$$

Now, setting ${\tilde{A}}(t):= t\log^\beta(e+t)$, with ${\beta>0}$, and ${\tilde{B}}(t):=e^{t^{\frac{1}{\beta}}}-1$, the corresponding Orlicz spaces $L^{{\tilde{A}}}(\Omega)$ and $L^{{\tilde{B}}}(\Omega)$ are well-known in literature as Zygmund spaces (see, e.g., \cite[Section~4]{Bennett}, {\cite{Opick}}), and are denoted by $\mbox{L}{\mbox{Log}}^\beta\mbox{L}(\Omega)$ and $\mbox{Exp}_{\frac{1}{\beta}}(\Omega)$, respectively. {We will denote by $exp_\frac{1}{\beta}(\Omega)$ the closure of the class of measurable and bounded functions on $\Omega$ with respect to the norm of $\mbox{Exp}_{\frac{1}{\beta}}(\Omega)$. This space is separable, so it coincides with the closure of the smooth functions on $\Omega$ with respect to the same norm.}

{We recall here a result which clarifies the relationship between the exponential Orlicz spaces and the Lebesgue spaces (see, e.g.~\cite[Lemma~2.3 and 2.4]{MT}).}

{\begin{lem}\label{lem:inclusions}
The following hold:
\begin{itemize}
\item[(i)] $exp_s(\R^n)\not\hookrightarrow L^\infty(\R^n)$, thus ${\rm Exp}_{s}(\R^n)\not\hookrightarrow L^\infty(\R^n)$, $s\geq1$;
\item[(ii)] ${\rm Exp}_{s}(\R^n)\not\hookrightarrow L^r(\R^n)$, for all $1\leq r<s$, $s>1$;
\item[(iii)] $L^q(\R^n)\cap L^\infty(\R^n) \hookrightarrow exp_s(\R^n)$, for all $1\leq q\leq s$. Moreover,
\begin{equation*}
\|u\|_{{\rm Exp}_{s}} \leq \frac{1}{(\log 2)^s} (\|u\|_q+\|u\|_\infty)\,;
\end{equation*}
\item[(iv)] for every $1\leq s\leq q <\infty$, it holds that
\begin{equation*}
\|u\|_q\leq \left(\Gamma\left(\frac{q}{s}+1\right)\right)^\frac{1}{q}\|u\|_{{\rm Exp}_{s}},
\end{equation*}
where $\Gamma(x):=\int_0^\infty\tau^{x-1}e^{-\tau}\,\mathrm{d}\tau$, $x>0$.
\end{itemize}
\end{lem}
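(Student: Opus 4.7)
The statement is a collection of four facts about the exponential Orlicz spaces ${\rm Exp}_{s}(\R^n)$ and their subspaces $exp_s(\R^n)$; I would address the items separately, since each rests on a different elementary idea.

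For (i), the plan is to exhibit an explicit unbounded function in $exp_s(\R^n)$. A suitable choice is $u(\vec x):=\log^{\gamma}(1+1/|\vec x|)\chi_{B_1}(\vec x)$ for any fixed $\gamma<1/s$: since $|u|^s$ grows strictly slower than $\log(1/|\vec x|)$ near the origin, the integral $\int_{\R^n}(e^{(|u|/\lambda)^s}-1)\,\dd\vec x$ is finite for every $\lambda>0$, which combined with the truncation argument $u_k:=\min(u,k)$ yields $u\in exp_s\setminus L^\infty$. The corresponding failure for the larger space ${\rm Exp}_{s}$ follows at once. For (ii), the absence of a continuous embedding into $L^r$ is witnessed by the one-parameter family $u_k:=c_k \chi_{E_k}$ on sets of measure $|E_k|=k$: normalizing $\|u_k\|_{{\rm Exp}_{s}}=1$ forces $c_k=(\log(1+1/k))^{1/s}\sim k^{-1/s}$, whence $\|u_k\|_{L^r}\sim k^{1/r-1/s}\to\infty$ whenever $r<s$. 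Either the closed graph theorem, or summing a suitably rescaled subsequence of such functions with disjoint supports to produce a single element of ${\rm Exp}_{s}\setminus L^r$, then closes the argument.

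For (iii), I would argue directly from the definition of the Luxemburg norm. Setting $\lambda:=c(\|u\|_q+\|u\|_\infty)$ and expanding via the Taylor series of $e^t-1$,
\begin{equation*}
\int_{\R^n}\bigl(e^{(|u|/\lambda)^s}-1\bigr)\,\dd\vec x=\sum_{k=1}^\infty \frac{1}{k!\,\lambda^{ks}}\int_{\R^n}|u|^{ks}\,\dd\vec x.
\end{equation*}
Since $q\leq s$ forces $ks\geq q$ for every $k\geq 1$, the interpolation $|u|^{ks}\leq \|u\|_\infty^{ks-q}|u|^q$ together with $\|u\|_\infty/\lambda,\|u\|_q/\lambda\leq 1/c$ reduces the right-hand side to $e^{c^{-s}}-1$; choosing $c$ so that this is $\leq 1$ delivers the bound with the stated constant. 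For (iv), the same Taylor identity, now applied with $\lambda=\|u\|_{{\rm Exp}_{s}}$, makes each summand individually at most $1$, hence $\|u\|_{ks}\leq (k!)^{1/(ks)}\lambda$ for every integer $k\geq 1$, which upon substituting $q=ks$ gives precisely the claimed inequality with constant $\Gamma(q/s+1)^{1/q}$. For non-integer values of $q/s$ I would either interpolate via log-convexity of the $L^p$-norm in $p$, or pass through the layer-cake formula together with the Chebyshev estimate $|\{|u|>t\}|\leq (e^{(t/\lambda)^s}-1)^{-1}$ and the substitution $\tau=(t/\lambda)^s$, which reproduces the Gamma integral exactly.

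The step I expect to be the most delicate is matching the precise constants: the Taylor-series argument yields bounds of the correct qualitative form but some care is required in optimizing the interpolation in (iii) so as to recover the stated $(\log 2)^{-s}$ rather than a weaker multiplicative constant, and similarly in (iv) one must either use log-convexity or the Chebyshev/layer-cake route in order to obtain the sharp $\Gamma(q/s+1)^{1/q}$ for all admissible $q$, not just for $q$ that are integer multiples of $s$. The remainder of the lemma reduces to the two counterexample constructions in (i) and (ii) together with this direct Taylor/Chebyshev computation.
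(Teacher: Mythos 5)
The paper offers no proof of this lemma at all: it is quoted directly from \cite[Lemmas~2.3 and 2.4]{MT}, so there is no in-text argument to compare yours against. Your treatments of (i), (ii) and (iii) are sound; in (iii) the Taylor-series computation you describe actually produces the constant $(\log 2)^{-1/s}$, which for $s\geq 1$ is smaller than the stated $(\log 2)^{-s}$, so the claimed inequality follows a fortiori.

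The genuine gap is in (iv) for non-integer $q/s$, and neither of your two proposed repairs delivers the stated constant. The layer-cake/Chebyshev route does not ``reproduce the Gamma integral exactly'': the distribution bound coming from the Luxemburg normalization is $|\{|u|>t\}|\leq (e^{(t/\lambda)^s}-1)^{-1}$, and after the substitution $\tau=(t/\lambda)^s$ the integral $q\int_0^\infty t^{q-1}(e^{(t/\lambda)^s}-1)^{-1}\,\dd t$ equals $\lambda^q\,\Gamma(q/s+1)\,\zeta(q/s)$, so you pick up an extra factor $\zeta(q/s)$ that moreover blows up as $q/s\downarrow 1$. The log-convexity route fails in the opposite direction: bracketing $q$ between $ks$ and $(k+1)s$ and applying Lyapunov's inequality gives $\int|u|^q\leq (k!)^{\alpha}((k+1)!)^{1-\alpha}\lambda^q$ with $q/s=k+1-\alpha$, and by the log-convexity of $\Gamma$ the geometric mean $\Gamma(k+1)^{\alpha}\Gamma(k+2)^{1-\alpha}$ is $\geq \Gamma(q/s+1)$, not $\leq$ it. The correct one-line fix is the pointwise inequality $y^{\theta}\leq \Gamma(\theta+1)(e^{y}-1)$ for all $y\geq 0$ and $\theta\geq 1$ (check it separately for $y\leq 1$, where $e^y-1\geq y\geq y^{\theta}$ and $\Gamma(\theta+1)\geq 1$, and for $y\geq 1$, where $y^{\theta}e^{-y}\leq \theta^{\theta}e^{-\theta}\leq \Gamma(\theta+1)/\sqrt{2\pi\theta}$ while $1-e^{-y}\geq 1-e^{-1}$); applying it with $y=(|u|/\lambda)^s$ and $\theta=q/s\geq 1$ and integrating yields $\int|u|^q\leq \lambda^q\,\Gamma(q/s+1)\int(e^{(|u|/\lambda)^s}-1)\leq \lambda^q\,\Gamma(q/s+1)$, which is exactly the statement.
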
}

We cannot find the explicit form of the complementary function ${\tilde{A}^*}$. However, from the immediate inequality
\begin{equation}
st\leq s\log^\beta(e+s) + e^{t^{\frac{1}{\beta}}}-1,\quad\mbox{for every $s,t>0$,}
\end{equation}
we deduce that ${\tilde{A}^*}\prec{\tilde{B}}$. In fact, by virtue of \cite[Theorem~6.2]{Kras}, it holds that ${\tilde{A}^*}\sim{\tilde{B}}$ near infinity. Moreover, $(\mbox{L}{\mbox{Log}}^\beta\mbox{L}(\Omega))^*=\mbox{Exp}_{\frac{1}{\beta}}(\Omega)$.

Clearly, since ${\tilde{B}}$ does not satisfy the $\Delta_2$-condition, then also ${\tilde{A}^*}$ does not.\\

In the sequel, we will use the following Poincar\'e inequality, whose proof can be found, e.g., in \cite[Prop. 2.13]{MSZ}, \cite[Lemma~5.7]{Gossez} and, in the case of the ball, e.g., in \cite[Lemma~3]{Ta1}.
\begin{prop}\label{poincare}
Let $\Omega$ be an open set of finite measure, and assume that $A$ satisfies $\Delta_2$-condition with constant $\mu$. There exists a constant $C=C(n,\mu,|\Omega|)$ such that
\begin{equation}
\int_\Omega A(|\vec u(\vec x)|)\,\dd \vec x\leq C \int_\Omega A(\|D\vec u(\vec x)\|)\,\dd \vec x,\quad \mbox{for all $\vec u\in W^{1,A}_0(\Omega,\R^n)$.}
\label{poinc}
\end{equation}
In particular, if $\Omega=B_r$ then $C=C(r)=r$ if $0<r\leq1$, while $C=C(\mu,r)\leq \mu^{\lfloor\log_2r\rfloor+1}$ if $r>1$.
\end{prop}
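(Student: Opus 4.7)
The plan is to combine the classical one-dimensional slicing proof of the Poincar\'e inequality with the convexity of $A$, the $\Delta_2$-condition, and Jensen's inequality, which together play the role that the homogeneity of $t\mapsto t^p$ plays in the Lebesgue case. First I would reduce to $\vec u\in C^\infty_c(\Omega,\R^n)$: since $A$ satisfies $\Delta_2$, smooth compactly supported maps are dense in $W^{1,A}_0(\Omega,\R^n)$ and both sides of \eqref{poinc} are continuous in the $W^{1,A}$-norm, so the general case follows by approximation. After extending $\vec u$ by zero and translating so that $\Omega\subset[-L,L]^n$, averaging the two one-sided fundamental theorem identities that use $\vec u(\pm L,x_2,\ldots,x_n)=0$ yields the pointwise bound $|\vec u(\vec x)|\leq\tfrac12\int_{-L}^{L}\|D\vec u(t,x_2,\ldots,x_n)\|\,\dd t=L\fint_{-L}^{L}\|D\vec u\|\,\dd t$.

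Applying $A$ to both sides, I would absorb the factor $L$ inside the argument of $A$ using the $\Delta_2$-condition in its equivalent form $A(\gamma t)\leq c_{\mu,\gamma}A(t)$ for $\gamma>1$; concretely, this amounts to at most $\lceil\log_2 L\rceil$ iterations of $A(2t)\leq\mu A(t)$. A single application of Jensen's inequality then converts $A\bigl(\fint_{-L}^{L}\|D\vec u\|\,\dd t\bigr)$ into $\fint_{-L}^{L}A(\|D\vec u\|)\,\dd t$, and the pointwise estimate becomes $A(|\vec u(\vec x)|)\leq C(\mu,L)\fint_{-L}^{L}A(\|D\vec u\|)\,\dd t$. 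Integrating over $\vec x\in\Omega$ and using Fubini (the right-hand side is independent of $x_1$, so the $x_1$-integration contributes a factor $2L$ that cancels the $1/(2L)$ built into the average) yields the global bound with constant depending on $n$, $\mu$ and $|\Omega|$.

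The refinement for $B_r$ is obtained by running the same argument along the chord of $B_r$ through $\vec x$ in the direction $e_1$: since $\vec u$ vanishes at both endpoints $\vec x^\pm=(x_1^\pm,x_2,\ldots,x_n)$, averaging the two one-sided identities gives $|\vec u(\vec x)|\leq\tfrac12(x_1^+-x_1^-)\fint_{x_1^-}^{x_1^+}\|D\vec u\|\,\dd t\leq r\fint_{x_1^-}^{x_1^+}\|D\vec u\|\,\dd t$, using that the half-chord length is $\sqrt{r^2-|\vec x^\perp|^2}\leq r$. For $0<r\leq 1$, the convexity of $A$ with $A(0)=0$ gives the sharper estimate $A(rs)\leq rA(s)$ with no use of $\Delta_2$, yielding $C=r$. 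For $r>1$, the smallest integer $k$ with $2^k\geq r$ satisfies $k\leq\lfloor\log_2 r\rfloor+1$, so that $k$ iterations of $\Delta_2$ give $A(rt)\leq\mu^{\lfloor\log_2 r\rfloor+1}A(t)$; the Jensen and Fubini steps are unchanged. The only technical care lies in the order of the estimates (FTC, then apply $A$, then $\Delta_2$ or convexity, then Jensen, then integrate) so that the length factor is absorbed once and for all before Jensen is invoked; I do not anticipate significant further obstacles.
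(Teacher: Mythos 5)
Your slicing argument is sound in outline, and for the ball $B_r$ --- the only case the paper actually invokes quantitatively (e.g.\ in the proof of Lemma~\ref{approxubyz}) --- it is complete and reproduces the stated constants exactly: averaging the two one-sided FTC identities along the chord gives $|\vec u(\vec x)|\le \tfrac12(x_1^+-x_1^-)\fint\|D\vec u\|\,\dd t\le r\fint\|D\vec u\|\,\dd t$, then $A(rs)\le rA(s)$ for $r\le 1$ (convexity with $A(0)=0$) or $A(rs)\le\mu^{\lceil\log_2 r\rceil}A(s)\le\mu^{\lfloor\log_2 r\rfloor+1}A(s)$ for $r>1$, followed by Jensen and Fubini, in the order you describe. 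The reduction to $C^\infty_c$ is also fine, since $W^{1,A}_0$ is by definition the closure of $C^\infty_c$ and the $\Delta_2$-condition makes both modulars continuous under norm convergence. Note that the paper gives no proof of this proposition at all --- it only cites \cite{MSZ}, \cite{Gossez} and \cite{Ta1} --- so your elementary argument is a genuinely different, self-contained route.

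There is, however, one genuine mismatch with the statement as written. The proposition claims a constant $C=C(n,\mu,|\Omega|)$ for \emph{any} open set of finite measure, whereas your one-directional slicing requires $\Omega\subset[-L,L]^n$ (finite measure does not imply boundedness) and produces a constant governed by the width $L$ of $\Omega$ in the chosen direction, not by $|\Omega|$: a long thin tube has small measure but forces a huge $L$, so $c_{\mu,L}$ is not a function of $(n,\mu,|\Omega|)$. To obtain genuine $|\Omega|$-dependence one replaces the single-direction bound by the direction-averaged representation $|\vec u(\vec x)|\le (n\omega_n)^{-1}\int_\Omega\|D\vec u(\vec y)\|\,|\vec x-\vec y|^{1-n}\,\dd\vec y$ (with $\omega_n=|B_1|$), observes by rearrangement that $\int_\Omega|\vec x-\vec y|^{1-n}\,\dd\vec y\le n\omega_n\rho$ where $|B_\rho|=|\Omega|$, and then applies Jensen with respect to the sub-probability measure $(n\omega_n\rho)^{-1}|\vec x-\vec y|^{1-n}\,\dd\vec y$ on $\Omega$ together with $A(\rho s)\le c_{\mu,\rho}A(s)$ and Fubini; this yields $C=c_{\mu,\rho}$ with $\rho=(|\Omega|/\omega_n)^{1/n}$, which is essentially what the cited references do. For the paper's purposes the distinction is harmless (the domain is always bounded, and the quantitative case is the ball), but as a proof of the proposition as stated your argument needs this repair, or the hypothesis must be strengthened to ``bounded''.
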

Recently, also the following alternative version of this inequality has been proved (see \cite[Theorem~3.9]{CIA}). It holds under the assumption that $\Omega$ has the \emph{cone property}; i.e., there exists a finite cone $P$ such that each point $x\in\Omega$ is the vertex of a finite cone $P_x$ contained in $\Omega$ and congruent to $P$.

\begin{prop}\label{poinc:2}
Let $\Omega\subset\R^n$ be an open bounded domain having the cone property, let $A$ be an $N$-function satisfying the $\Delta_2$-condition. Let $\vec u \in W^{1,A}(\Omega,\R^n)$. Then
\begin{equation*}
\int_\Omega A(\|\vec u(\vec x) - \vec u_B\|)\, \dd \vec x \leq C \int_\Omega A(\|D\vec u(\vec x)\|)\, \dd \vec x,
\end{equation*}
where 
\begin{equation*}
\vec u_B:=\fint_B \vec u(\vec y)\,\dd\vec y, 
\end{equation*}
$B$ is any ball such that $B\ssubset\Omega$ and $C$ is a positive constant depending only on $\Omega$ and $B$.
\end{prop}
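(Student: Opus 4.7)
My strategy is a contradiction argument based on a compact embedding, which is the standard route for this type of global Poincaré inequality. Assume the estimate is false. Then one can find a sequence $\{\vec u_k\}\subset W^{1,A}(\Omega,\R^n)$ for which
\begin{equation*}
\int_\Omega A(\|\vec u_k - (\vec u_k)_B\|)\,\dd\vec x = 1, \qquad \int_\Omega A(\|D\vec u_k\|)\,\dd\vec x \to 0.
\end{equation*}
Replacing $\vec u_k$ by $\vec u_k-(\vec u_k)_B$, I may assume that $(\vec u_k)_B=\vec 0$. The two integral bounds together with the $\Delta_2$-condition and the equivalence \eqref{eq:slicing1} give a uniform bound for $\{\vec u_k\}$ in $W^{1,A}(\Omega,\R^n)$ (in the Luxemburg norm).

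The key analytic input is the compact embedding $W^{1,A}(\Omega,\R^n) \hookrightarrow L^A(\Omega,\R^n)$ valid when $\Omega$ has the cone property and $A$ satisfies $\Delta_2$: this is the Orlicz-Sobolev analogue of Rellich--Kondrachov, proved by constructing an extension operator to $\R^n$ from the cone property, regularizing by mollification, and using $\Delta_2$ to pass from modular to norm convergence. Applying it, I extract a (non-relabeled) subsequence converging strongly in $L^A$ and a.e.\ to some $\vec u_\infty\in W^{1,A}(\Omega,\R^n)$. Convexity of $A$ and weak lower semicontinuity of the modular $\int_\Omega A(\|D\cdot\|)\,\dd\vec x$ force $D\vec u_\infty\equiv\vec 0$, so $\vec u_\infty$ is a constant $\vec c$. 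Passing to the limit in the condition $(\vec u_k)_B=\vec 0$ yields $\vec c=\vec 0$, while strong $L^A$ convergence together with $\Delta_2$ gives $\int_\Omega A(\|\vec u_k\|)\,\dd\vec x\to0$, contradicting $\int_\Omega A(\|\vec u_k\|)\,\dd\vec x=1$. Since the contradiction is reached for any $\Omega$ satisfying the hypotheses and any fixed $B\ssubset\Omega$, the constant $C$ depends only on $\Omega$ and $B$.

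The main obstacle is the compact embedding. Without the cone property it would fail; with it, the extension-by-reflection technique used to build the classical extension operator needs to be checked to be continuous in the Orlicz topology, which works precisely because $\Delta_2$ gives the equivalence of modular and norm convergence.

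An alternative that avoids compactness is a chaining argument: cover $\bar\Omega$ by a finite collection of balls $B=B_0,B_1,\ldots,B_N$ with $B_i\cap B_{i+1}$ of comparable measure to $B_i$, apply Proposition~\ref{poincare} on each $B_i$, and iterate the triangle-type inequality $|\vec u_{B_i}-\vec u_{B_{i+1}}|\leq c\,\fint_{B_i\cap B_{i+1}}\|\vec u-\vec u_{B_i}\|+c\,\fint_{B_i\cap B_{i+1}}\|\vec u-\vec u_{B_{i+1}}\|$, absorbing the multiplicative constants that appear via the $\Delta_2$-condition (Jensen's inequality in the form $A\bigl(\fint A^{-1}\circ g\bigr)\leq \fint g$ together with the doubling property of $A$). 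Summing over the finite chain yields the conclusion with $C$ depending only on the geometric data of the cover.
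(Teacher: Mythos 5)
The paper does not actually prove this proposition: it is quoted from Cianchi \cite[Theorem~3.9]{CIA}, whose argument runs through a pointwise representation $\|\vec u(\vec x)-\vec u_B\|\leq C(\Omega,B)\int_\Omega \|D\vec u(\vec y)\|\,|\vec x-\vec y|^{1-n}\,\dd\vec y$ (valid on cone-property domains) followed by a modular estimate for the Riesz potential; that is also why the constant can be taken to depend only on $\Omega$ and $B$ and not on $A$. So the comparison here is purely about whether your argument closes, and it does not.

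The gap is in the very first step of your contradiction argument. The inequality to be proved is a \emph{modular} inequality, and the modular $\vec v\mapsto\int_\Omega A(\|\vec v\|)\,\dd\vec x$ is not homogeneous. Negating the statement gives, for each $k$, some $\vec u_k$ with $(\vec u_k)_B=\vec 0$ and $\int_\Omega A(\|\vec u_k\|)\,\dd\vec x> k\int_\Omega A(\|D\vec u_k\|)\,\dd\vec x$, but you cannot pass from this to a sequence with $\int_\Omega A(\|\vec u_k\|)\,\dd\vec x=1$ and $\int_\Omega A(\|D\vec u_k\|)\,\dd\vec x\to0$. If you rescale $\vec v_k=\lambda_k\vec u_k$ so that the first modular equals $1$, the two moduli transform by \emph{different} factors: convexity and $\Delta_2$ only give $\lambda\,A(t)\leq A(\lambda t)\leq c\,\lambda^{\log_2\mu}A(t)$ for $\lambda\geq1$ (and the reverse sandwich for $\lambda\leq1$), so the ratio of the two moduli can deteriorate by an uncontrolled factor $\lambda_k^{\log_2\mu-1}$ and need not stay above $k$. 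What the compactness argument genuinely proves is the \emph{norm} inequality $\|\vec u-\vec u_B\|_{L^A}\leq C\|D\vec u\|_{L^A}$ (where normalization is legitimate by homogeneity of the Luxemburg norm), but the modular inequality does not follow from the norm inequality, so the contradiction never materializes. Two further points: (i) even if repaired, both of your routes produce a constant depending on the $\Delta_2$ constant $\mu$ of $A$, whereas the statement asserts $C=C(\Omega,B)$; (ii) your chaining alternative invokes Proposition~\ref{poincare} on each ball $B_i$, but that proposition is for $W^{1,A}_0$ functions (zero boundary values), not for $\vec u-\vec u_{B_i}$, so the base case of the chain — a local mean-value modular Poincar\'e inequality on balls — is exactly what is missing and is itself the substance of Cianchi's theorem.
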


Another useful tool will be the following general version of \emph{Chebyshev's inequality}: if $A$ is a non-negative and non-decreasing function defined for $t\geq0$, then for every $\lambda>0$ we have
\begin{equation}
\left|\{x\in\Omega:\, |v(\vec x)|\geq \lambda\}\right|\leq\frac{1}{A(\lambda)}\int_\Omega A(|v(\vec x)|)\, \dd \vec x.
\label{cheb}
\end{equation}
The proof of \eqref{cheb} is very simple. Denoting by $\mathbbm{1}_U$ the indicator function of set $U$ and unsing the monotonicity of $A$, we have
\begin{equation*}
\begin{split}
\left|\{x\in\Omega:\, |v(\vec x)|\geq \lambda\}\right|&=\int_\Omega \mathbbm{1}_{\{|v|\geq\lambda\}}(\vec x)\,\dd \vec x\leq \int_\Omega \mathbbm{1}_{\{A(|v|)\geq A(\lambda)\}}(\vec x)\,\dd \vec x\\
& \leq \int_{\{A(|v|)\geq A(\lambda)\}} \frac{A(|v(\vec x)|)}{A(\lambda)}\,\dd \vec x\leq\frac{1}{A(\lambda)}\int_{\Omega} A(|v(\vec x)|)\,\dd \vec x.
\end{split}
\end{equation*}

A general criterion for the equi-absolute continuity of the integrals of a family of functions in $L^A(\Omega)$ is given by the following version of \emph{Vall\'ee Poussin's Theorem} (see, e.g., \cite[Ch. II, §11.1]{Kras}):
\begin{thm}
Let $A$ be an $N$-function, and $\mathcal{F}$ be a family of functions in $L^A(\Omega)$. If there exists $C>0$ such that
\begin{equation*}
\int_\Omega A(|u(\vec x)|)\, \dd \vec x\leq C,\quad u\in\mathcal{F},
\end{equation*}
then the family $\mathcal{F}$ has equi-absolutely continuous integrals.
\label{lavpous}
\end{thm}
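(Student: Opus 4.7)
The plan is to exploit the defining superlinearity of an $N$-function, namely $A(t)/t\to\infty$ as $t\to\infty$, in order to control the contribution of the large values of $|u|$ uniformly over the family $\mathcal{F}$. Recall that equi-absolute continuity of the integrals means: given $\varepsilon>0$, one must produce $\delta>0$ such that $\int_E |u(\vec x)|\,\dd\vec x<\varepsilon$ for every $u\in\mathcal{F}$ and every measurable $E\subset\Omega$ with $|E|<\delta$.

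First I would fix $\varepsilon>0$ and, using that $A$ is an $N$-function, choose a threshold $M=M(\varepsilon,C)>0$ large enough that
\begin{equation*}
t\leq \frac{\varepsilon}{2C}\, A(t) \qquad\text{for every }t\geq M.
\end{equation*}
Such an $M$ exists precisely because $\lim_{t\to\infty}A(t)/t=+\infty$. Then, for an arbitrary $u\in\mathcal{F}$ and a measurable $E\subset\Omega$, I would split
\begin{equation*}
\int_E |u(\vec x)|\,\dd\vec x=\int_{E\cap\{|u|\leq M\}} |u(\vec x)|\,\dd\vec x+\int_{E\cap\{|u|>M\}} |u(\vec x)|\,\dd\vec x .
\end{equation*}
The first integral is at most $M|E|$, while the second, thanks to the pointwise bound above and the uniform modular estimate $\int_\Omega A(|u|)\,\dd\vec x\leq C$, is bounded by $\frac{\varepsilon}{2C}\int_\Omega A(|u(\vec x)|)\,\dd\vec x\leq \frac{\varepsilon}{2}$.

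It then suffices to take $\delta:=\varepsilon/(2M)$: for every $u\in\mathcal{F}$ and every measurable $E\subset\Omega$ with $|E|<\delta$, the two contributions are each at most $\varepsilon/2$, hence $\int_E|u(\vec x)|\,\dd\vec x<\varepsilon$ uniformly in $u$. There is no real obstacle here: the single non-trivial ingredient is the superlinear growth of the $N$-function, which decouples the choice of $M$ (absorbing the tail in the modular bound $C$) from the choice of $\delta$ (handling the small-measure part), and makes both choices independent of the particular $u\in\mathcal{F}$. As an alternative, the Chebyshev-type inequality \eqref{cheb} could be invoked to bound $|\{|u|>M\}|$ and then close the argument via generalized H\"older, but the direct pointwise estimate above is the most economical route.
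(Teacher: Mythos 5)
Your proof is correct. Note that the paper does not actually prove this statement --- it is quoted as a known version of de la Vall\'ee Poussin's criterion with a citation to Krasnosel'ski\u{\i}--Ruticki\u{\i} --- so there is no internal proof to compare against; your argument (choose $M$ with $t\leq \frac{\varepsilon}{2C}A(t)$ for $t\geq M$ via the superlinearity $A(t)/t\to\infty$, split $E$ according to $|u|\lessgtr M$, and take $\delta=\varepsilon/(2M)$) is exactly the standard textbook proof, and every step is sound and uniform in $u\in\mathcal{F}$.
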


Let $\{\vec v_j\}$ be a sequence of functions in $L^A(\Omega,\R^n)$ and let $\vec v\in L^A(\Omega,\R^n)$. If $A$ is $\Delta_2$ near infinity, then
\begin{equation*}
\lim_{j\to+\infty}\|\vec v_j-\vec v\|_{L^A(\Omega,\R^n)}=0 \, \Leftrightarrow\, \lim_{j\to+\infty}\int_\Omega A(\|\vec v_j-\vec v\|)\,\dd \vec x=0.
\end{equation*}
Note that, if $A$ does not satisfy $\Delta_2$-condition, the implication ``$\Leftarrow$'' fails. If $A\in\Delta_2$ near infinity, instead, we have
\begin{equation*}
\lim_{j\to+\infty}\|\vec v_j-\vec v\|_{L^A(\Omega,\R^n)}=0 \, \Rightarrow\, \lim_{j\to+\infty}\int_\Omega A(\|\vec v_j\|)\,\dd \vec x=\int_\Omega A(\|\vec v\|)\,\dd \vec x.
\end{equation*}

\section{Definitions and preliminary results}\label{prelim}

This section collects some basic definitions and preliminary results.

\begin{defn}\label{Ncond} A function $\vec u:\Omega\longrightarrow\R^n$ defined everywhere satisfies \emph{Lusin's $N$ condition} if the image of a subset of $\Omega$ of measure zero is a set of measure zero. We say that $\vec u$ satisfies \emph{Lusin's $N^{-1}$ condition} if the preimage of a subset of $\R^n$ of measure zero is a set of measure zero.\end{defn}

Let $\vec u:\Omega\longrightarrow\R^n$ be a measurable function and let $\vec x_0\in\Omega$. If $\vec u$ is \emph{approximately differentiable} at $\vec x_0$, we denote by $\nabla \vec u (\vec x_0)$ its approximate differential at $\vec x_0$. We denote  the set of approximate differentiability points of $\vec u$ by $\Omega_d$.
If $\vec u$ is approximately differentiable a.e., for any $E \subset \R^n$ and $\vec y \in \R^n$, we define 
\begin{equation}
\mathcal{N}_E (\vec y):=\mathcal{H}^0(\{\vec x \in \Omega_d \cap E:\, \vec u (\vec x) = \vec y\}).
\label{N_E}
\end{equation}

Now, we recall the definition of \emph{almost everywhere (a.e.) invertibility} for a vector-valued function.

\begin{defn}\label{df:1-1ae}
A function $\vec u:\Omega\longrightarrow\R^n$ is said to be one-to-one a.e. in a subset $E\subset\Omega$ if there exists a subset $N\subset E$, with $\mathcal{L}^n(N)=0$, such that $\vec u_{|_{E\backslash N}}$ is one-to-one.
\end{defn}

The following is the notion of \emph{geometric image} of a set adapted to the context of Orlicz spaces (see \cite[Section~2.2]{HS}).

\begin{defn}\label{de:O0}
Let $\vec u\in W^{1,A}(\Omega,\R^n)$ and assume that $\mbox{det}D\vec u(\vec x)\neq0$ for a.e. $\vec x\in\Omega$. Let $\Omega_0$ be the subset of $\vec x\in\Omega$ where the following are satisfied:\\
i) $\vec u$ is approximately differentiable at $\vec x$ and $\mbox{det}\nabla \vec u(\vec x)\neq0$;
\\
ii) there exist $\vec w\in C^1(\R^n,\R^n)$ and a compact set $K\subset\Omega$ of density 1 at $\vec x$ such that $\vec u_{|_K}=\vec w_{|_K}$ and $\nabla \vec u_{|_K}=D\vec w_{|_K}$.

For any measurable $E\subset\Omega$, the geometric image of $E$ under $\vec u$ is defined as
\begin{equation}
\imG(\vec u,E):=\vec u(E\cap\Omega_0).
\label{eqn:geoimg}
\end{equation}
\end{defn}

It turns out that $\Omega_0$ is a set of full measure in $\Omega$ (see the remarks after \cite[Def.~2.4]{HS}). 

\subsection{A class of good open sets}
We consider a class of ``good'' sets, such that the restrictions of Orlicz-Sobolev functions to their boundaries enjoy some desirable properties (Definition~\ref{df:good_open_sets}(i)-(iv)).

Let be given $U \ssubset \Omega$ a nonempty, open set with a $C^2$ boundary. We call $d: \Omega \to \R$ 
the signed distance function from $U$ and consider its super-level sets 
\begin{equation}\label{eq:Ut}
 U_t := \left\{ \vec x \in \Omega : d(\vec x)>t \right\} ,
\end{equation}
for each $t \in \R$.
It is well-known (see, e.g., \cite[p.\ 112]{Sverak88} or \cite[p.\ 48]{MuSp95}) that there exists $\delta>0$ such that for all $t \in (-\delta,\delta)$, the set $U_t$ is open, $\overline{U_t}\ssubset\Omega$ and has a $C^2$ boundary.

Let $A$ be an $N$-function satisfying the growth at infinity \eqref{eq:L2logL} and the $\Delta_2$-condition 
  at infinity \eqref{delta2}. It is stated in \cite[Remark~3.2]{CarozzaCianchi19}, \cite[Prop.2.6]{HS}, that maps $\vec u \in W^{1,A}(\Omega, \R^n)$ have a continuous representative on $(n-1)$-dimensional $C^1$ manifolds. Therefore, for some $C^1$ open set $U\subset \subset \Omega$ the notation $\vec u|_{\partial U}$ will be referred to the continuous representative of $\vec u$ on $\partial U$. In addition, \cite[Prop.~2.6]{HS}, Federer's change of variables formula holds true: for any $\mathcal{H}^{n-1}$-measurable subset $E \subset \partial U$,
\begin{equation}\label{federerchange}
\mathcal{H}^{n-1}(\vec u(E))=\int_E |(\cof\nabla\vec u(\vec x)) \vecg \nu(\vec x)| \dd \mathcal{H}^{n-1}(\vec x),
\end{equation}
where $ \vecg \nu(\vec x)$ denotes the outward unit normal to $\partial U$ at $\vec x$.
\begin{defn}\label{df:good_open_sets}   
We define the class of \emph{good open sets} $\mathcal{U}_{\vec u}$ as the family of nonempty open sets $U \ssubset \Omega$ with a $C^2$ boundary where the following conditions are satisfied:
\begin{description}
\item[(i)] $\vec u|_{\partial U} \in W^{1,A}(\partial U, \R^n)$, and $(\cof \nabla \vec u)|_{\partial U} \in L^1 (\partial U, \Rnn)$;\\

\item[(ii)] $\partial U \subset \Omega_0$ $\mathcal{H}^{n-1}$-a.e., where $\Omega_0$ is the set of Definition \ref{de:O0}, and $\nabla (\vec u|_{\partial U})(\vec x) = \nabla \vec u(\vec x) |_{T_{\vec x} \partial U}$ for $\mathcal H^{n-1}$-a.e.\ $\vec x\in \partial U$, where $T_{\vec x} \partial U$ denotes the linear tangent space of $\partial U$ at $\vec x$;\\

\item[(iii)] $\displaystyle \lim_{\varepsilon \searrow 0} \fint_0^{\varepsilon} \left| \int_{\partial U_t} |\cof \nabla \vec u| \, \dd \mathcal{H}^{n-1} - \int_{\partial U} |\cof \nabla \vec u| \, \dd \mathcal{H}^{n-1} \right| \dd t = 0 $;\\

\item[(iv)] For every $\vec g \in C^1 (\R^n, \R^n)$ with $(\adj D \vec u) (\vec g \circ \vec u) \in L^1_{\mbox{loc}} (\Omega, \R^n)$,
\begin{equation*}
\begin{split}
  \lim_{\e \searrow 0} &\fint_0^{\e} \biggl| \int_{\partial U_t} \! \vec g (\vec u (\vec x)) \cdot \left( \cof \nabla \vec u (\vec x) \, \vecg \nu_t (\vec x) \right) \dd \mathcal{H}^{n-1} (\vec x) \\
&- \int_{\partial U} \! \vec g (\vec u (\vec x)) \cdot \left( \cof \nabla \vec u (\vec x) \, \vecg \nu (\vec x) \right) \dd \mathcal{H}^{n-1} (\vec x) \biggr| \dd t = 0 ,
\end{split}
\end{equation*}
where $\vecg \nu_t$ denotes the unit outward normal to $U_t$ for each $t \in (0, \e)$, and $\vecg \nu$ 
the unit outward normal to~$U$.
\end{description}
\end{defn}

\subsection{Degree for Orlicz-Sobolev maps and topological image of a set}\label{sec:degtop}

In order to introduce the concept of \emph{topological image}, we need to recall the notion of topological degree for continuous functions (see, e.g., \cite{Deimling85,FoGa95book}). 

Let $U$ be a bounded open set of $\R^n$. We've already recalled that every map $\vec u\in W^{1,A}(\partial U,\R^n)$, with $A$ verifying \eqref{eq:L2logL} and the $\Delta_2$-condition at infinity, is continuous on  $(n-1)$-dimensional $C^1$ manifolds and so admits a continuous representative $\bar{\vec u}:\partial U \longrightarrow \R^n$, that can be extended to a continuous $\tilde{\vec u}:\overline{U} \longrightarrow \R^n$ by virtue of Tietze's theorem (see, e.g., \cite[Theorem~35.1]{Mun75}). 
Therefore, the following definition of degree is consistent since the degree only depends on the boundary values (see, e.g., \cite[Th.\ 3.1 (d6)]{Deimling85}).
\begin{defn}
The degree $\deg (\bar{\vec u}, U, \cdot): \R^n \setminus \bar{\vec u} (\partial U) \to \Z$ of $\bar{\vec u}$ on $U$ is defined as the degree $\deg (\tilde{\vec u}, U, \cdot): \R^n \setminus \bar{\vec u}(\partial U) \to \Z$ of $\tilde{\vec u}$ on $U$. 
\end{defn}
  With a slight abuse of notation, we will denote by $\deg ({\vec u}, U, \cdot)$ the degree of $\vec u\in W^{1,A}(\partial U,\R^n)$, tacitly referring to the degree of its continuous representative.\\

Following the approach of \v{S}ver\'ak \cite{Sverak88} (see also \cite{MuSp95}), we are now in position to define the concept of topological image.
\begin{defn}\label{def:INV_CDL}
Let $A$ be an $N$-function satisfying \eqref{eq:L2logL}
and let $U\ssubset \R^n$ be a nonempty open set with a $C^1$ boundary.
If $\vec u \in W^{1,A}(\partial U, \R^n)$, we define $\imT(\vec u, U)$, the topological image of $U$ under 
$\vec u$, as the set of $\vec y \in \R^n \setminus \vec u (\partial U)$ such that $\deg(\vec u, U, \vec y) \neq 0$.
\end{defn}

The continuity of function $\deg (\vec u, U,\cdot)$ implies that the set $\imT(\vec u, U)$ is open and $\partial \imT(\vec u, U) \subset \vec u(\partial U)$. In addition, as $\deg(\vec u, U, \cdot)=0$ in the unbounded component of $\R^n \setminus \vec u (\partial U)$ (see, e.g., \cite[Sect.\ 5.1]{Deimling85}), it follows that $\imT (\vec u, U)$ is bounded.

\section{The class $\mathcal{A}(\Omega)$ of admissible functions} \label{sec:admissible}

{First, we denote by $\A(\Omega)$  the class } of admissible deformations consisting, roughly speaking, of Sobolev-Orlicz functions which are orientation preserving and such that no cavitation occurs.

{From now on, we fix as $N$-function $A$ satisfying \eqref{eq:L2logL} and the $\Delta_2$-condition at infinity \eqref{delta2} the function $A(t):= t^{n-1}\log^\alpha(e+t)$ for $\alpha\in(n-2,n-1)$.}

\begin{defn}\label{energydefn}
Let $\vec u\in W^{1,A}(\Omega,\R^n)$ and assume that $\mbox{det} D\vec u\in L^1(\Omega)$. For {$\phi\in C^1_c(\Omega)$} and $\vec g\in C^1_c(\R^n,\R^n)$ we define the energy
\begin{equation}
\mathcal{E}_\Omega(\vec u,\phi,\vec g):=\int_\Omega \bigl[\mbox{cof}\,D\vec u(\vec x)\cdot(\vec g(\vec u(\vec x))\otimes D\phi(\vec x))+\mbox{det}D\vec u(\vec x)\phi(\vec x)\mbox{div}\vec g(\vec u(\vec x))\bigr]\,\mathrm{d}\vec x.
\label{energy}
\end{equation}
\end{defn}

\begin{oss}
{
We notice that if $\vec u\in W^{1,A}(\Omega,\R^n)$, then $D\vec u\in L^A(\Omega,\Rnn)$, so $\cof D\vec u\in \mbox{LLog}^{\frac{\alpha}{n-1}}\mbox{L}(\Omega,\Rnn)$. In particular, $\cof D\vec u\in L^1_{\rm loc}(\Omega,\Rnn)$. This implies that the energy \eqref{energy} is finite.}
\end{oss}

\begin{defn}\label{def:SE}
Let $\vec u : \Omega \longrightarrow \R^n$ be measurable and approximately differentiable a.e.
Assume that $\det \nabla \vec u \in L^1_{\mbox{loc}} (\Omega)$ and $\cof \nabla \vec u \in L^1_{\mbox{loc}} (\Omega,\Rnn)$.

For every $\vec f \in C^1_c (\Omega \times \R^n,\R^n)$, define
\begin{equation}\label{eq:Euf}
 \bar{\mathcal{E}}_\Omega (\vec u, \vec f) := \int_{\Omega} \left[ \cof \nabla \vec u (\vec x) \cdot D \vec f (\vec x, \vec u (\vec x)) + \det \nabla \vec u (\vec x)\, \mbox{div}\,\vec f (\vec x, \vec u (\vec x))  \right] \dd \vec x
\end{equation}
and
\[
 \bar{\mathcal{E}}_\Omega (\vec u) := \sup \biggl\{ \bar{\mathcal{E}}_\Omega (\vec u, \vec f) : \ \vec f \in C^1_c (\Omega \times \R^n,\R^n), \ \| \vec f \|_{\infty} \leq 1 \biggr\} .
\]
\end{defn}
In equation \eqref{eq:Euf}, $D \vec f (\vec x, \vec y)$ denotes the derivative of $\vec f (\cdot, \vec y)$ evaluated at $\vec x$, while $\mbox{div}\,\vec f (\vec x, \vec y)$ is the divergence of $\vec f (\vec x, \cdot)$ evaluated at $\vec y$.

Note that if we restrict the definition of $\mathcal{E}_\Omega(\vec u,\phi,\vec g)$ to test functions $\phi\in C^1_c(\Omega)$, then defining $\vec f(\vec x,\vec y):=\phi(\vec x)\vec g(\vec y)$ we have $\mathcal{E}_\Omega(\vec u,\phi,\vec g)=\bar{\mathcal{E}}_\Omega (\vec u, \vec f)$. Therefore, passing to the supremum: $\mathcal{E}_\Omega(\vec u):= \sup \{\mathcal{E}_\Omega(\vec u,\phi,\vec g):\, \|\phi \vec g\|\le 1\}$. Along the lines of the proof of \cite[Theorem~4.6]{HM11}, it can be shown that $\mathcal{E}_\Omega(\vec u)=\bar{\mathcal{E}}_\Omega (\vec u)$ for all functions $\vec u\in W^{1,A}(\Omega,\R^n)$ such that $\mbox{det} D\vec u\in L^1(\Omega)$ and $\mbox{det} D\vec u>0$ a.e.

The energy $\bar{\mathcal{E}}_\Omega (\vec u)$ was introduced in \cite{HM10} and measures the new surface in the deformed configuration created by $\vec u$. For our purposes, we are interested into deformations $\vec u$ such that $\bar{\mathcal{E}}_\Omega (\vec u)=0$; i.e., that do not exhibit cavitation.

It is useful for the sequel to recall the definition of distributional determinant (see, e.g., \cite{Ball1}).
In the expression below, the symbol $\langle \cdot , \cdot \rangle$ denotes the duality product between a distribution and a smooth function.

\begin{defn}\label{de:Det}
Let $\vec u \in W^{1,1} (\Omega, \R^n)$ satisfy $(\adj D \vec u) \, \vec u \in L^1_{\mbox{loc}} (\Omega, \R^n)$.
The distributional determinant of $\vec u$ is the distribution $\Det D \vec u$ defined as
\[
 \langle \Det D\vec u , \phi \rangle := -\frac{1}{n} \int_\Omega \vec u(\vec x) \cdot (\cof D\vec u(\vec x)) \, D\phi(\vec x) \, \dd \vec x, \qquad \phi\in C_c^\infty(\Omega) .
\]
\end{defn}

The equality $\Det D\vec u= \det D\vec u$, when $\Det D\vec u\in L^1(\Omega)$, can be intended as
\begin{equation*}
-\frac{1}{n} \int_\Omega \vec u(\vec x) \cdot (\cof D\vec u(\vec x)) \, D\phi(\vec x) \, \dd \vec x=\int_\Omega\det D\vec u(\vec x)\phi(\vec x)\, \dd \vec x, \qquad \phi\in C_c^\infty(\Omega) .
\end{equation*}

We introduce the class $\mathcal{A}(\Omega)$ of \emph{admissible functions} as follows.

\begin{defn}
A function $\vec u\in W^{1,A}(\Omega,\R^n)$ is said to be \emph{admissible}, and we write $\vec u\in\mathcal{A}(\Omega)$, if $\mbox{det} D\vec u\in L^1(\Omega)$, $\mbox{det} D\vec u>0$ a.e. and $\mathcal{E}_\Omega(\vec u)=0.$
\label{admissible}
\end{defn}

For each such an admissible deformation $\vec u$, $\bar{\mathcal{E}}_\Omega(\vec u)={\mathcal{E}}_\Omega(\vec u)=0$. Furthermore, the conditions $\bar{\mathcal{E}}_\Omega(\vec u)=0$ and $\det D\vec u=\Det D\vec u$ are equivalent, as expressed by the following theorem.

\begin{thm}{\rm \cite[Theorem~1.1]{HS}}\label{thm:StroffoHenao} Let $A$ be a Young function satisfying \eqref{eq:L2logL} and 
assume that $\vec u\in W^{1,A}(\Omega. \R^n)$ satisfies $\det D\vec u\in L^1_{loc}(\Omega)$. Then the following are equivalent:
\begin{itemize}
\item{}$\bar{\mathcal{E}}_\Omega(\vec u)=0$ and $\det D\vec u>0$ a.e.; 
\item{}$(\adj D\vec u)\vec u\in L^1_{loc}(\Omega. \R^n)$, $\det D\vec u(\vec x)\not=0$ for a.e.\ $\vec x\in \Omega$, 
  $\det D\vec u=\Det D\vec u$ and $\deg (\vec u, B(\vec x, r))\ge 0$ for every $\vec x\in \Omega$ and a.e.\ $r\in (0, \dist(\vec x, 
  \partial \Omega))$.
\end{itemize}
\end{thm}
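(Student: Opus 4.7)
The plan is to split the biconditional into two implications, treating the distributional identity, the integrability, and the sign of the degree as complementary pieces of structural information. The integrability $(\adj D\vec u)\vec u \in L^1_{loc}$ holds under either set of hypotheses: since $D\vec u \in L^A$ with $A(t)\sim t^{n-1}\log^\alpha(e+t)$, the cofactor entries, being $(n-1)$-fold products of entries of $D\vec u$, lie locally in a Zygmund-type space of $\mbox{L}\mbox{Log}^\alpha\mbox{L}$-type, while the Orlicz--Sobolev embedding places $\vec u$ locally in the conjugate exponential class $\mbox{Exp}_{1/\alpha}$; the generalized Hölder inequality then supplies local integrability of the product. Assuming $\bar{\mathcal{E}}_\Omega(\vec u)=0$ and $\det D\vec u >0$ a.e., I would test the vanishing energy against separable fields $\vec f(\vec x,\vec y) = \phi(\vec x)\chi_R(\vec y)\vec y$, with $\phi\in C^1_c(\Omega)$ and $\chi_R \in C^\infty_c(\R^n)$ equal to $1$ on a sufficiently large ball so that $\chi_R(\vec u(\vec x))=1$ and $D\chi_R(\vec u(\vec x))=0$ on $\mathrm{supp}\,\phi$ after a truncation argument. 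A direct computation then gives
\begin{equation*}
\bar{\mathcal{E}}_\Omega(\vec u,\vec f) = \int_\Omega (\adj D\vec u)\vec u \cdot D\phi\, \dd \vec x + n\int_\Omega \phi \det D\vec u\,\dd \vec x,
\end{equation*}
which, compared with Definition~\ref{de:Det}, immediately yields $\det D\vec u = \Det D\vec u$ as distributions. The non-negativity of the degree is then obtained from the area/change of variables formula on good balls: for a.e.\ $r\in(0,\dist(\vec x,\partial\Omega))$, the ball $B(\vec x,r)$ belongs to $\mathcal{U}_{\vec u}$ and the degree admits the integer-valued representation
\begin{equation*}
\deg(\vec u, B(\vec x,r), \vec y) = \sum_{\vec x'\in \vec u^{-1}(\vec y)\cap B(\vec x,r)\cap \Omega_0} \mathrm{sgn}\, \det\nabla\vec u(\vec x') \geq 0
\end{equation*}
for $\La^n$-a.e.\ $\vec y \not\in \vec u(\partial B(\vec x,r))$, where the sign is used in the last step.

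\textbf{Reverse direction.} Conversely, assume the four conditions of the second alternative. The identity $\det D\vec u = \Det D\vec u$, together with the computation of the previous paragraph (read backwards), yields $\bar{\mathcal{E}}_\Omega(\vec u,\phi\chi_R\vec{id})=0$ for all separable test fields of that form. To upgrade this to arbitrary $\vec f \in C^1_c(\Omega\times\R^n, \R^n)$, I would approximate $\vec f$ uniformly together with $D_{\vec x}\vec f$ and $\mathrm{div}_{\vec y}\vec f$ on its compact support by finite sums of tensor products $\phi_k(\vec x)\vec g_k(\vec y)$ (via Stone--Weierstrass in $C^1$ on the compact product domain), and pass to the limit using $(\adj D\vec u)\vec u$ and $\det D\vec u$ as $L^1_{loc}$ dominants. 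The positivity $\det D\vec u>0$ a.e.\ then follows from the hypothesis $\det D\vec u\neq 0$ combined with the degree condition: were $E:=\{\det D\vec u<0\}$ of positive measure, a Lebesgue density argument at a point of $E$ would produce a good ball $B(\vec x,r)\in\mathcal{U}_{\vec u}$ on which the signed count above delivers a set of values $\vec y$ of positive $\La^n$-measure with $\deg(\vec u,B(\vec x,r),\vec y)<0$, contradicting the assumption.

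\textbf{Main obstacle.} The technical crux lies in performing all these approximation and change-of-variables steps inside the Orlicz--Sobolev scale at hand: $L^A$ is in general not separable and smooth functions are not norm-dense, while the dual class $\mbox{Exp}_{1/\alpha}$ fails the $\Delta_2$-condition, so the standard density and dominated convergence machinery from the $L^p$ theory is unavailable off the shelf. In particular, the area and degree formulas on almost every spherical boundary rely on the fine trace and continuity properties of $\vec u$ on $(n-1)$-dimensional $C^1$ manifolds under \eqref{eq:L2logL}, the Federer change of variables \eqref{federerchange}, and the good-ball machinery of Definition~\ref{df:good_open_sets}; moreover the truncation in the $\vec y$ variable that legitimizes the computation for $\vec f=\phi\chi_R\vec{id}$ must be handled so as to preserve the support and integrability bounds needed to close the dominated-convergence argument.
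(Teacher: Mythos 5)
The paper does not actually prove this statement: it is imported verbatim from \cite[Theorem~1.1]{HS}, so there is no internal argument to compare against. Judged on its own terms, your sketch has two genuine gaps. First, the integrability $(\adj D\vec u)\vec u\in L^1_{loc}$ does not ``hold under either set of hypotheses'' by H\"older. For $A(t)\sim t^{n-1}\log^\alpha(e+t)$ with $n\ge 3$ the space $W^{1,A}$ is subcritical (the gradient is barely better than $L^{n-1}$, not $L^n$), so the Orlicz--Sobolev embedding places $\vec u$ locally only in a power-type Zygmund class, nowhere near the exponential class $\mathrm{Exp}_{(n-1)/\alpha}$ conjugate to the space $\mathrm{L}\mathrm{Log}^{\alpha/(n-1)}\mathrm{L}$ containing $\cof D\vec u$. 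In the implication $(1)\Rightarrow(2)$ this integrability is a nontrivial \emph{conclusion}: it follows from $\vec u\in L^\infty_{loc}$, which is itself extracted from the weak monotonicity and degree identities that $\bar{\mathcal E}_\Omega(\vec u)=0$ forces (cf.\ Proposition~\ref{th:char1}(i),(v)), not from an embedding. Your truncation step, which requires $\chi_R(\vec u)=1$ on $\mathrm{supp}\,\phi$, presupposes exactly this local boundedness.

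Second, and more seriously, the reverse direction does not close. The identity $\Det D\vec u=\det D\vec u$ is precisely the vanishing of $\mathcal E_\Omega(\vec u,\phi,\vec g)$ for the \emph{single} field $\vec g=\vec{id}$ (suitably truncated); it gives no information about $\mathcal E_\Omega(\vec u,\phi,\vec g)$ for a general $\vec g\in C^1_c(\R^n,\R^n)$. Your Stone--Weierstrass step approximates $\vec f$ by sums $\sum_k\phi_k\otimes\vec g_k$ with arbitrary $\vec g_k$, for which nothing has been established, so the limit passage has nothing to pass to. Bridging from ``$\Det=\det$, $\det\neq 0$ a.e.\ and $\deg\ge 0$'' to ``$\bar{\mathcal E}_\Omega(\vec u)=0$'' is the hard implication in \cite{HS} and \cite{HBMC}; it runs through the identity $\deg(\vec u,U,\cdot)=\mathcal N_U$ and the surface/degree integration-by-parts formula on good open sets $U\in\mathcal U_{\vec u}$, and the degree hypothesis enters essentially there. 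Your derivation of $\bar{\mathcal E}_\Omega(\vec u)=0$ never invokes $\deg\ge 0$ at all (you use it only for the sign of the Jacobian), which is a structural sign that the argument cannot be correct as written.
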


\subsection{Some properties of class $\A(\Omega)$: fine properties, openness and local invertibility}\label{sec:fine}

In this section, we preliminarly recall some fine properties for admissible deformations $\vec u\in\A(\Omega)$ (\cite[Proposition~4.2]{HS}). Here, $\mathcal{U}_{\vec u}$ is the class of good open sets introduced with Def.~\ref{df:good_open_sets}, and $\mathcal{N}_U$ is the number defined by \eqref{N_E}.

\begin{prop}
 \label{th:char1}
Let $\vec u \in \mathcal A(\Omega)$. Then the following properties hold:
  \begin{description}
   \item[(i)] $\vec u \in L^\infty_{{\rm loc}}(\Omega, \R^n)$;\\
   \item[(ii)] $\Det D \vec u = \det D \vec u$;\\
   \item[(iii)] For all $U \in \mathcal{U}_{\vec u}$,
 \begin{equation}\label{eq:degUNU}
 \deg (\vec u, U, \cdot) = \mathcal{N}_U \ \text{ a.e.}
 \quad \text{and}\quad \imT(\vec u, U)=\imG(\vec u, U)\ \text{a.e.};\\
  \end{equation}
   \item[(iv)] For every $U_1$, $U_2\in \mathcal{U}_{\vec u}$ with $U_1\ssubset U_2$,
   \begin{equation*}
    \label{pr:43d}
 \deg (\vec u, U_1, \cdot) \leq \deg (\vec u, U_2, \cdot) \text{ a.e.\ and in } \R^n \setminus \vec u (\partial U_1 \cup \partial U_2),
\end{equation*}
and
\begin{equation*}
\overline{\imT(\vec u, U_1)} \subset \overline{\imT(\vec u, U_2)};
\end{equation*}
   \item[(v)] The components of $\vec u$ are weakly 1-pseudomonotone.
  \end{description}
\end{prop}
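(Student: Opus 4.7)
The plan is to establish the five properties in the order (ii), (iii), (iv), (i), (v), since each step rests on the preceding ones. The engine of the argument is the identification $\mathcal{E}_\Omega(\vec u)=\bar{\mathcal{E}}_\Omega(\vec u)$ recalled after Definition~\ref{def:SE} together with Theorem~\ref{thm:StroffoHenao}, which translates the vanishing of the cavitation energy into concrete analytic and topological identities. Property (ii) is then immediate: admissibility gives $\det D\vec u>0$ a.e.\ and $\mathcal{E}_\Omega(\vec u)=0$, hence $\bar{\mathcal{E}}_\Omega(\vec u)=0$; Theorem~\ref{thm:StroffoHenao} yields at once $(\adj D\vec u)\,\vec u\in L^1_{\rm loc}(\Omega,\R^n)$ and $\Det D\vec u=\det D\vec u$.

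For (iii), fix $U\in\mathcal{U}_{\vec u}$ and $\phi\in C^\infty_c(\R^n)$. Choosing $\vec g\in C^1(\R^n,\R^n)$ with $\Div\vec g=\phi$, the classical integral representation of the degree for continuous maps yields, for each small $t>0$,
\[
\int_{\R^n}\deg(\vec u,U_t,\vec y)\,\phi(\vec y)\,\dd\vec y = \int_{\partial U_t}\vec g(\vec u(\vec x))\cdot(\cof\nabla\vec u(\vec x)\,\vecg\nu_t(\vec x))\,\dd\mathcal{H}^{n-1}(\vec x).
\]
Conditions (iii)--(iv) of Definition~\ref{df:good_open_sets} then permit averaging over $t\in(0,\varepsilon)$ and passing to the limit $\varepsilon\searrow 0$, so that the boundary integral on $\partial U_t$ can be replaced by the analogous one on $\partial U$. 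Combining with the divergence structure provided by (ii), one obtains
\[
\int_{\R^n}\deg(\vec u,U,\vec y)\,\phi(\vec y)\,\dd\vec y = \int_U \phi(\vec u(\vec x))\,\det\nabla\vec u(\vec x)\,\dd\vec x.
\]
Since $\det\nabla\vec u>0$ on $U\cap\Omega_0$ and $\vec u$ is approximately differentiable there, the area formula rewrites the right-hand side as $\int_{\R^n}\phi(\vec y)\,\mathcal{N}_U(\vec y)\,\dd\vec y$. Arbitrariness of $\phi$ yields $\deg(\vec u,U,\cdot)=\mathcal{N}_U$ a.e., and the identity $\imT(\vec u,U)=\imG(\vec u,U)$ a.e.\ follows because both sets coincide with $\{\mathcal{N}_U\geq 1\}$ up to a null set.

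Property (iv) is now quick: $\mathcal{N}_{U_1}\leq\mathcal{N}_{U_2}$ whenever $U_1\subset U_2$, so the degree inequality holds a.e.; both sides are locally constant integer-valued on $\R^n\setminus\vec u(\partial U_1\cup\partial U_2)$, which promotes the a.e.\ inequality to a pointwise one there. The closure containment of topological images is a direct consequence of $\partial\imT(\vec u,U)\subset\vec u(\partial U)$ together with (iii). For (i), recall that $\imT(\vec u,U)$ is bounded for every $U\in\mathcal{U}_{\vec u}$; by (iii) the set $\vec u(U\cap\Omega_0)$ is bounded as well, hence $\vec u$ is essentially bounded on $U$, and exhausting $\Omega$ by a countable family of good open sets gives $\vec u\in L^\infty_{\rm loc}(\Omega,\R^n)$.

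Finally, (v) is obtained from the non-negativity of $\deg(\vec u,B(\vec x,r),\cdot)$ for a.e.\ $r\in(0,\dist(\vec x,\partial\Omega))$, granted by Theorem~\ref{thm:StroffoHenao}: each scalar component $u_i$ inherits a one-dimensional INV-type property, from which weak 1-pseudomonotonicity follows by the slicing argument of \v{S}ver\'ak and M\"uller--Spector. The main obstacle is the limiting procedure underlying (iii): one must combine the continuity of $\vec u$ on $(n-1)$-dimensional $C^1$ manifolds (which requires the growth condition~\eqref{eq:L2logL} on $A$) with the $L^1$-convergence of the cofactor traces $\cof\nabla\vec u\,\vecg\nu_t$ along $\partial U_t$, and it is precisely the delicate interplay between these two ingredients, encoded in the very definition of $\mathcal{U}_{\vec u}$, that extends the classical Sobolev results to the logarithmic Orlicz-Sobolev scale.
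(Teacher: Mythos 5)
The paper does not actually prove this proposition: it is recalled verbatim from \cite[Proposition~4.2]{HS}, so there is no internal argument to compare yours against. Your sketch reconstructs the standard route of that reference (and of \cite{HBMC}): (ii) directly from Theorem~\ref{thm:StroffoHenao}; (iii) from the integral representation of the degree, the averaging in $t$ permitted by Definition~\ref{df:good_open_sets}(iii)--(iv), the identity $\bar{\mathcal{E}}_\Omega(\vec u)=0$ and the area formula; and (iv), (i), (v) as consequences — this is correct in outline. The only thin spots are routine but worth flagging: in (i), passing from ``$\imG(\vec u,U)=\imT(\vec u,U)$ a.e.'' (an identity of subsets of the \emph{target}) to essential boundedness of $\vec u$ on $U$ requires Lusin's condition $N^{-1}$ (which holds because $\det\nabla\vec u>0$ a.e.), and the closure inclusion in (iv) rests on $\La^n(\vec u(\partial U_2))=0$ (a consequence of Definition~\ref{df:good_open_sets}(i) and \eqref{federerchange}) together with the pointwise degree inequality, rather than on $\partial\imT(\vec u,U)\subset\vec u(\partial U)$ alone.
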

\begin{defn}
A function $ u\in W^{1,1}(\Omega)$ is called $K$-weakly pseudomonotone if for every $x\in \Omega$ and a.e. $0<r<\dist(x, \partial \Omega)$,
\begin{equation*}
\displaystyle\mathop{\ess \osc}_{B(x,r)} u\le K\mathop{\ess \osc}_{S(x,r)} u
\end{equation*}
Notice that the oscillation on the left is meant to be essential with respect to the Lebesgue measure while on the right with respect to the $(n-1)$-Hausdorff measure.
\end{defn}

\begin{defn}\label{de:imTux}
Let $\vec u \in \mathcal{A}(\Omega)$.
We define the \emph{topological image} of a point $\vec x \in \Omega$ by $\vec u$ as 
\begin{equation*}
 \imT(\vec u, \vec x) := \bigcap_{\substack{r>0 \\ B(\vec x, r) \in \mathcal U_{\vec u}}} \overline{\imT(\vec u, B(\vec x,r))},
\end{equation*}
and $NC:=\{\vec x\in\Omega \colon \mathcal{H}^0 (\imT(\vec u,\vec x)) >1\}$.
\end{defn}

It's worth noting that both the definitions of $\imT(\vec u, \vec x)$ and $NC$ do not depend on the particular representative of $\vec u$ (see \cite[Remark\ 5.7.(c)]{HBMC} for explanations). 

\begin{prop}{\rm \cite[Proposition~4.5]{HS}} 
 \label{pr:fine}
For every $\vec u \in \mathcal{A}(\Omega)$ the following are satisfied:
\begin{description}
 \item[(i)] $\mathcal{H}^1(NC)=0$;
 \item[(ii)] For every $\vec x_0 \in \Omega \setminus NC$, 
\begin{equation*}
\lim_{r\searrow 0}  \fint_{B(\vec x_0, r)} \vec u(\vec x)\,\dd \vec x=:\vec u^*(\vec x_0)\in \R^n;
\end{equation*} 
 \item[(iii)] The map $\hat{\vec u}$ defined \emph{everywhere} in $\Omega$ by
 \begin{equation}
 \hat {\vec u}(\vec x):=
\begin{cases} 
  \vec u^*(\vec x) & \text{if }
  \vec x \in \Omega \setminus NC, \\ 
\text{any element of $\imT(\vec u,\vec x)$} & \text{if } \vec x \in NC  
\end{cases}
\end{equation}
 is such that $\hat{\vec u}(\vec x)=\vec u(\vec x)$ for every $\vec x \in \Omega_0$ and
 $\hat {\vec u}(\vec x)\in \imT(\vec u, \vec x)$ for every $\vec x\in \Omega$.
 Moreover, it is continuous at every point of $\vec x \in \Omega\setminus NC$,
 differentiable a.e., and such that $\mathcal{L}^n(\hat {\vec u}(N))=0$
 for every $N\subset \Omega$ with $\mathcal{L}^n(N)=0$.
\end{description}

\end{prop}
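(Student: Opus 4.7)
The plan is to adapt the approach of \cite{HBMC} (for $W^{1,p}$, $p>n-1$) to the Orlicz-Sobolev setting, exploiting that the growth $A(t)\sim t^{n-1}\log^{\alpha}(e+t)$ with $\alpha>n-2$ gives just enough integrability of the cofactor (in fact $\cof D\vec u \in \mbox{L}{\mbox{Log}}^{\alpha/(n-1)}\mbox{L}_{\mbox{loc}}(\Omega,\Rnn) \subset L^1_{\mbox{loc}}$) to close the dimensional arguments. The overarching idea is that the admissibility condition $\mathcal{E}_\Omega(\vec u)=0$ (no cavitation) forces $NC$ to be small, and off this set the topological image is a single point, providing a natural continuous representative.

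For (i), I would fix $\vec x\in NC$ so that $\imT(\vec u,\vec x)$ contains two distinct points $\vec y_1, \vec y_2$ with $\delta_{\vec x}:=|\vec y_1 -\vec y_2|>0$. By the monotonicity in Prop.~\ref{th:char1}(iv), for every admissible radius $r$ the open set $\imT(\vec u, B(\vec x,r))$ has diameter at least $\delta_{\vec x}/2$, hence by an isoperimetric lower bound its perimeter is at least $c\delta_{\vec x}^{n-1}$. Since $\partial \imT(\vec u, B(\vec x,r))\subset \vec u(\partial B(\vec x,r))$, Federer's formula \eqref{federerchange} gives
\begin{equation*}
c\,\delta_{\vec x}^{n-1}\leq \int_{\partial B(\vec x,r)}|(\cof\nabla\vec u)\vecg\nu|\,\dd\mathcal{H}^{n-1}
\end{equation*}
for a.e.\ small $r$. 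Integrating in $r$, combining with a Vitali-type covering by balls of arbitrarily small radius centered on $NC$, and invoking the equi-absolute continuity of the integrals of $|\cof D\vec u|$ on the Orlicz scale (Theorem~\ref{lavpous} applied to $\mbox{L}{\mbox{Log}}^{\alpha/(n-1)}\mbox{L}$), one obtains that $NC$ can be covered by balls with arbitrarily small total $\mathcal{H}^1$-content, hence $\mathcal{H}^1(NC)=0$.

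For (ii) and the structural parts of (iii), I would fix $\vec x_0\in\Omega\setminus NC$: then $\imT(\vec u,\vec x_0)$ is a singleton $\{\vec u^*(\vec x_0)\}$. Using $\imG(\vec u, B(\vec x_0,r))=\imT(\vec u, B(\vec x_0,r))$ a.e.\ from Prop.~\ref{th:char1}(iii) together with the fact that $\Omega_0\cap B(\vec x_0,r)$ has full measure in $B(\vec x_0,r)$, one has $\vec u(\vec x')\in \overline{\imT(\vec u, B(\vec x_0,r))}$ for every $\vec x'\in\Omega_0\cap B(\vec x_0,r)$. Since the diameter of this topological image tends to zero as $r\searrow 0$ (by the nested intersection defining $\imT(\vec u,\vec x_0)$), the averages converge and identify with $\vec u^*(\vec x_0)$. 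This same diameter bound yields continuity of $\hat{\vec u}$ at every $\vec x_0\in \Omega\setminus NC$; a.e.\ approximate differentiability is inherited from $\vec u\in W^{1,A}$ together with $\hat{\vec u}=\vec u$ on $\Omega_0$.

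The Lusin $N$ condition is the subtlest point: for $N\subset\Omega$ with $\mathcal{L}^n(N)=0$, I would split $\hat{\vec u}(N)=\hat{\vec u}(N\setminus NC)\cup\hat{\vec u}(N\cap NC)$. The first piece agrees modulo a null set with $\vec u(N\cap\Omega_0)$, which is $\mathcal{L}^n$-null by the area formula for approximately differentiable maps. The second piece lies in $\bigcup_{\vec x\in N\cap NC}\imT(\vec u,\vec x)$; absence of cavitation bounds the total volume created by such topological cells by $\int\det D\vec u$, and since $\mathcal{H}^1(NC)=0$ this contribution is $\mathcal{L}^n$-null as well. The main obstacle is the step for (i): obtaining the precise $\mathcal{H}^1$-null statement (rather than only $\sigma$-finite $\mathcal{H}^{n-p}$ as in the $W^{1,p}$ case) from the equi-integrability of $\cof D\vec u$ on the $\mbox{L}{\mbox{Log}}^{\alpha/(n-1)}\mbox{L}$ scale --- precisely here the condition $\alpha>n-2$ enters and must be exploited via the Orlicz version of the Vitali covering theorem.
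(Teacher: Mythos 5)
This proposition is not proved in the paper at all: it is quoted from \cite[Proposition~4.5]{HS}, so your reconstruction must be measured against the argument there (which adapts \cite{HBMC}). Your treatment of (ii) and (iii) follows that route correctly in outline (nested compact sets $\overline{\imT(\vec u,B(\vec x_0,r))}$ with a singleton intersection have diameters shrinking to $0$; the Lusin $N$ property is obtained by covering $N\setminus\Omega_0$ with balls and using $|\imT(\vec u,B(\vec x,r))|=\int_{B(\vec x,r)}\det D\vec u$, which is the precise meaning of your ``absence of cavitation bounds the volume''). The problem is part (i), which you yourself flag as the crux: the step ``$\diam \imT(\vec u,B(\vec x,r))\geq \delta_{\vec x}/2$ hence by an isoperimetric lower bound $\mathcal{H}^{n-1}(\partial\,\imT(\vec u,B(\vec x,r)))\geq c\,\delta_{\vec x}^{n-1}$'' is false. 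A bounded open set of diameter $\geq\delta$ can have arbitrarily small boundary measure (a tube of length $\delta$ and thickness $\epsilon$ has boundary of order $\delta\epsilon^{n-2}$), and the genuine isoperimetric inequality bounds the perimeter from below by $|\cdot|^{(n-1)/n}$, which degenerates here since $|\imT(\vec u,B(\vec x,r))|=\int_{B(\vec x,r)}\det D\vec u\to 0$ as $r\searrow 0$. So the cofactor/surface route via \eqref{federerchange} cannot produce the needed lower bound, and the subsequent covering argument has nothing to run on.

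The correct quantitative input, as in \cite{HS} and \cite{HBMC}, is the oscillation estimate on spheres rather than a perimeter estimate: since $\partial\,\imT(\vec u,B(\vec x,r))\subset \vec u(\partial B(\vec x,r))$ one has $\delta_{\vec x}\leq \diam\,\vec u(\partial B(\vec x,r))=\osc_{\partial B(\vec x,r)}\vec u$, and the Cianchi--Morrey embedding $W^{1,A}(\partial B(\vec x,r))\hookrightarrow C^0$ on the $(n-1)$-dimensional sphere --- valid exactly under \eqref{eq:L2logL}, i.e.\ $\alpha>n-2$ --- converts this into a lower bound for $\int_{\partial B(\vec x,r)}A(c\|D\vec u\|)\,\dd\mathcal{H}^{n-1}$ for a.e.\ small $r$; integration in $r$ gives $\int_{B(\vec x,\rho)}A(c\|D\vec u\|)\,\dd\vec x\geq C(\delta_{\vec x})\,\rho$, and the Vitali--Besicovitch covering argument is then run against the absolutely continuous measure $A(c\|D\vec u\|)\La^n$ (for which plain absolute continuity of the integral of a single $L^1$ density suffices; de la Vall\'ee Poussin is not needed) to conclude $\mathcal{H}^1(NC_\delta)=0$ for each $\delta>0$ and hence $\mathcal{H}^1(NC)=0$. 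So your instinct that $\alpha>n-2$ is the decisive hypothesis is right, but it enters through the sup-norm embedding on $(n-1)$-manifolds, not through an ``Orlicz Vitali covering'' for $\cof D\vec u$; as written, step (i) of your proof does not close.
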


Note that  equality \eqref{eq:degUNU} in Prop.~\ref{th:char1} implies an \emph{openness} property for $\vec u$:
		for every $U\in \mathcal{U}_{\vec u}$,
		\begin{equation}\label{eq:imTUimGU}
		 \imT (\vec u, U) = \imG (\vec u, U) \quad \text{a.e.}
		\end{equation}
		
\begin{defn} \label{de:imTomega}
Let $\vec u\in \mathcal A(\Omega)$. Define $$\mathcal{U}^N_{\vec u} := \left\{ U \in \mathcal{U}_{\vec u}:\, \partial U \cap NC = \varnothing \right\}$$
and
\[
 \imT (\vec u, \Omega) := \bigcup_{U\in\mathcal U^N_{\vec u}} \imT (\vec u, U).
\]
\end{defn}

We will see in Section \ref{complow} that $\imT (\vec u, \Omega)$ plays the role of the \emph{deformed configuration}.
By the continuity of the degree, $\imT (\vec u, U)$ is open, and hence, so is $\imT (\vec u, \Omega)$.
Moreover, it does not depend on the particular representative of $\vec u$ (\cite[Lemma 5.18.(b)]{HBMC}).\\

We recall here some results of local invertibility for functions $\vec u\in\A(\Omega)$.

\begin{defn} \label{de:Oinv}
Let $\vec u\in \mathcal A(\Omega)$.
We denote by $\mathcal{U}^{\mbox{in}}_{\vec u}$ the class of $U\in \mathcal U_{\vec u}$ such that $\vec u$ is one-to-one a.e.\ in $U$
(see Definition \ref{df:1-1ae}), 
and by $\mathcal{U}^{N,\mbox{in}}_{\vec u}$ the set $\mathcal{U}^N_{\vec u} \cap \mathcal{U}^{\mbox{in}}_{\vec u}$. 
Define $$\Omega_{\mbox{in}}:=\bigcup \{U:\, U\in\mathcal{U}^{\mbox{in}}_{\vec u}\}.$$
\end{defn}

The set $\Omega_{\mbox{in}}$
consists of the sets of points around which $\vec u$ is locally a.e.\ invertible: 
$\vec x \in \Omega_{\mbox{in}}$ if and only if there exists $r>0$ such that $\vec u$ is one-to-one a.e.\ in $B (\vec x, r)$.
It does not depend on the particular representative of $\vec u$ and it turns out that $\Omega_{\mbox{in}}$ is of full measure in $\Omega$ (see \cite[Proposition~4.9]{HS}).

Equality \eqref{eq:imTUimGU} allows us to define the local inverse having for domain the open set $\imT (\vec u, U)$.
\begin{defn}\label{de:inverse}
Let $\vec u \in \mathcal{A}(\Omega)$ and $U \in \mathcal{U}^{\mbox{in}}_{\vec u}$.
The inverse $(\vec u|_U)^{-1} : \imT (\vec u, U) \to \R^n$ is defined a.e.\ as $(\vec u|_U)^{-1} (\vec y) = \vec x$, 
for each $\vec y \in \imG (\vec u, U)$, and where $\vec x \in U \cap \Omega_0$ satisfies $\vec u (\vec x) = \vec y$.
\end{defn}

The following results hold (see \cite[Propositions~4.11 and 4.12]{HS}).

\begin{prop}\label{prop:locINV}
Let $\vec u \in \mathcal{A}(\Omega)$ and $U \in \mathcal{U}^{\emph{in}}_{\vec u}$. Then 
\[
(\vec u|_U)^{-1} \in W^{1,1} (\imT (\vec u, U), \R^n) \quad \text{and} 
\quad D (\vec u|_U)^{-1} = \left( D \vec u \circ (\vec u|_U)^{-1} \right)^{-1} \text{ a.e.}
\]
\end{prop}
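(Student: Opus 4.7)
The plan rests on three ingredients: the a.e.\ injectivity of $\vec u|_U$ (built into $U\in\mathcal U^{\mathrm{in}}_{\vec u}$), the identifications $\imT(\vec u,U)=\imG(\vec u,U)=\vec u(U\cap\Omega_0)$ a.e.\ provided by Proposition~\ref{th:char1}(iii), and the Piola identity $\bar{\mathcal E}_\Omega(\vec u)=0$ valid for admissible $\vec u\in\mathcal A(\Omega)$, combined with Federer's change of variables \eqref{federerchange}, available in the Orlicz-Sobolev range considered here thanks to \eqref{eq:L2logL}.

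First I would verify well-definedness and membership in $L^1$: since $\vec u|_U$ is one-to-one outside an $\mathcal L^n$-null set and $\imT(\vec u,U)=\vec u(U\cap\Omega_0)$ up to a null set, the inverse $(\vec u|_U)^{-1}$ of Definition~\ref{de:inverse} is defined a.e.\ on $\imT(\vec u,U)$, takes values in the bounded set $\overline U$, and $\imT(\vec u,U)$ itself is bounded; hence $(\vec u|_U)^{-1}\in L^\infty\subset L^1$. Next I would introduce the candidate weak derivative
\[
\vec H(\vec y):=\bigl(D\vec u\bigl((\vec u|_U)^{-1}(\vec y)\bigr)\bigr)^{-1},
\]
which is a.e.\ defined because $\det D\vec u>0$ a.e.\ on $\Omega_0$, and use the identity $(D\vec u)^{-1}\det D\vec u=(\cof D\vec u)^T$ together with Federer's change of variables to bound
\[
\int_{\imT(\vec u,U)}\|\vec H(\vec y)\|\,\dd\vec y \;\le\; c\int_{U\cap\Omega_0}\|\cof D\vec u(\vec x)\|\,\dd\vec x \;<\;+\infty,
\]
the right-hand side being finite because, by the remark after Definition~\ref{energydefn}, $\cof D\vec u\in L^1_{\mathrm{loc}}(\Omega,\Rnn)$ and $\overline U\subset\Omega$.

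The core step is to identify $\vec H$ as the distributional derivative of $(\vec u|_U)^{-1}$. For every $\varphi\in C^1_c(\imT(\vec u,U))$ and every pair of indices $j,k$, the identity to prove reads
\[
\int_{\imT(\vec u,U)}\bigl((\vec u|_U)^{-1}\bigr)_j\,\partial_k\varphi\,\dd\vec y \;=\;-\int_{\imT(\vec u,U)}\vec H_{jk}\,\varphi\,\dd\vec y.
\]
Applying Federer's change of variables to both sides rewrites it as
\[
\int_U x_j\,(\partial_k\varphi)(\vec u(\vec x))\,\det D\vec u(\vec x)\,\dd\vec x \;=\;-\int_U(\cof D\vec u(\vec x))_{kj}\,\varphi(\vec u(\vec x))\,\dd\vec x,
\]
which is precisely the vanishing of $\bar{\mathcal E}_\Omega(\vec u,\vec f)$ tested against
\[
\vec f(\vec x,\vec y):=\rho(\vec x)\,x_j\,\varphi(\vec y)\,\vec e_k,
\]
where $\rho\in C^1_c(\Omega)$ is a cutoff identically equal to $1$ on a neighbourhood of $\overline U$. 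The fact that $\bar{\mathcal E}_\Omega(\vec u,\vec f)=0$ (and not merely $\le 0$) for all such $\vec f$ follows from the supremum definition of $\bar{\mathcal E}_\Omega(\vec u)$ being zero, combined with the symmetry $\vec f\mapsto -\vec f$ and homogeneity; equivalently, it is the identity $\det D\vec u=\Det D\vec u$ supplied by Theorem~\ref{thm:StroffoHenao}.

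The main obstacle is arranging the cutoff $\rho$ so that the spurious contributions generated by $D\rho$ drop out of the Piola identity: this requires $\mathrm{supp}\,\varphi$ to be compactly contained in $\imT(\vec u,U)$ and $\rho\equiv 1$ on the essential preimage $\vec u^{-1}(\mathrm{supp}\,\varphi)\cap U\cap\Omega_0$. Both facts rest on the openness of $\imT(\vec u,U)$ and on the fine properties collected in Proposition~\ref{pr:fine}; together with the Orlicz-Sobolev Federer-type formula established in \cite{HS}, they are precisely the ingredients that allow the proof to carry over from the $W^{1,p}$, $p>n-1$, case of \cite{HBMC} to the logarithmic-scale setting considered here.
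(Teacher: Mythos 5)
The paper does not prove this proposition itself: it is quoted verbatim from \cite[Proposition~4.11]{HS}. Judged on its own, your skeleton is the standard one (a.e.\ injectivity of $\vec u|_U$, the identification $\imT(\vec u,U)=\imG(\vec u,U)$ a.e., an $L^1$ bound on the candidate derivative $\vec H=(D\vec u\circ(\vec u|_U)^{-1})^{-1}$ via $\adj D\vec u=(D\vec u)^{-1}\det D\vec u$, and a Piola-type identity to recognize $\vec H$ as the weak derivative), and the algebra of your key displayed identity on $U$ is correct. But the step that produces that identity is where the argument fails. The functional $\bar{\mathcal E}_\Omega(\vec u,\vec f)$ with $\vec f(\vec x,\vec y)=\rho(\vec x)\,x_j\,\varphi(\vec y)\,\vec e_k$ integrates over all of $\Omega$, and its integrand is supported on $\{\rho\neq 0\}\cap \vec u^{-1}(\operatorname{supp}\varphi)$, which in general meets $\Omega\setminus U$: membership in $\mathcal U^{\mathrm{in}}_{\vec u}$ only gives a.e.\ injectivity \emph{inside} $U$, and no INV-type condition in the definition of $\mathcal A(\Omega)$ forbids material points of $\Omega\setminus\overline U$ from being mapped into $\imT(\vec u,U)$ (think of a deformation that is two-to-one globally but one-to-one on $U$). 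Consequently $\bar{\mathcal E}_\Omega(\vec u,\vec f)=0$ leaves you not only with the $D\rho$ term but also with the uncontrolled contribution
\[
\int_{(\Omega\setminus U)\cap\{\rho=1\}}\Bigl[\varphi(\vec u(\vec x))\,(\cof D\vec u(\vec x))_{kj}+x_j\,(\partial_k\varphi)(\vec u(\vec x))\,\det D\vec u(\vec x)\Bigr]\,\dd\vec x ,
\]
and no choice of a cutoff $\rho$ in the $\vec x$ variable can remove it. Declaring that $\rho\equiv 1$ on ``the essential preimage $\vec u^{-1}(\operatorname{supp}\varphi)\cap U\cap\Omega_0$'' does not help, because the problematic preimage points lie \emph{outside} $U$.

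The correct localization goes through $\partial U$, and this is precisely why the proposition is stated for good open sets $U\in\mathcal U^{\mathrm{in}}_{\vec u}\subset\mathcal U_{\vec u}$: conditions (iii)--(iv) of Definition~\ref{df:good_open_sets}, together with $\mathcal E_\Omega(\vec u)=0$ applied on the collars $U_t$, yield a divergence identity on $U$ with a surface term, valid for $\phi\in C^1(\overline U)$ \emph{not} compactly supported,
\[
\int_{U}\Bigl[\cof\nabla\vec u\cdot\bigl(\vec g(\vec u)\otimes D\phi\bigr)+\det\nabla\vec u\,\phi\,\Div\vec g(\vec u)\Bigr]\dd\vec x
=\int_{\partial U}\phi\,\vec g(\vec u)\cdot\bigl(\cof\nabla\vec u\,\vecg\nu\bigr)\,\dd\mathcal H^{n-1}.
\]
Choosing $\phi(\vec x)=x_j$ and $\vec g=\varphi\,\vec e_k$ with $\operatorname{supp}\varphi\ssubset\imT(\vec u,U)$, the boundary term vanishes because $\partial\,\imT(\vec u,U)\subset\vec u(\partial U)$, so $\varphi\circ\vec u\equiv 0$ on $\partial U$; this gives exactly your identity on $U$, and the rest of your computation then goes through. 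Two secondary points: the change of variables needed both for the $L^1$ bound on $\vec H$ and for rewriting the two sides is the $n$-dimensional area formula for a.e.\ approximately differentiable, a.e.\ injective maps (on $U\cap\Omega_0$), not the $(n-1)$-dimensional formula \eqref{federerchange}; and the finiteness of $\int_U\|\adj D\vec u\|$ indeed follows from $\cof D\vec u\in L^1_{\rm loc}$ and $\overline U\subset\Omega$, as you say.
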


\begin{prop}\label{th:ujINV}
For each $j \in \N$, let $\vec u_j, \vec u \in \mathcal{A}(\Omega)$ satisfy $\vec u_j \weakc \vec u$ in $W^{1,A} (\Omega, \R^n)$ as $j \to \infty$.
The following assertions hold:
\begin{description}
\item[(i)]\label{item:thINV2} For any $U \in \mathcal{U}^N_{\vec u}$ and any 
compact set $K \subset \imT (\vec u, U)$ there exists a subsequence for which 
	${K} \subset \imT (\vec u_j, \Omega)$ for all $j \in \N$.
\item[(ii)]\label{item:thINV5} For a subsequence, there exists a disjoint family
\[
\{ B_k \}_{k \in \N} \subset \biggl(\mathcal{U}^{N,\emph{in}}_{\vec u} \cap \bigcap_{j \in \N} \mathcal{U}^{N,\emph{in}}_{\vec u_j}\biggr)
\]
such that $\Omega = \bigcup_{k \in \N} B_k$ a.e.\ and, for each $k \in \N$,
\begin{equation}\label{eq:Bkinv}
 \vec u_j \to \vec u\quad  \text{uniformly on } \partial B_k, \text{ as } j \to \infty .
\end{equation}

\item[(iii)]\label{item:thINV3} Let $B \in \mathcal{U}^{\emph{in}}_{\vec u} \cap \bigcap_{j \in \N} \mathcal{U}^{\emph{in}}_{\vec u_j}$ and take an open set $V \ssubset \imT (\vec u, B)$ such that $V \subset \imT (\vec u_j, B)$ for all $j \in \N$.
Then
\begin{description}
\item[(a)]\label{item:ujINVi} $(\vec u_j|_B)^{-1} \weakcs (\vec u|_B)^{-1}$ in $BV (V, \R^n)$ as $j \to \infty$;

\item[(b)]\label{item:ujINViii} for any minor $M$, we have $M (D (\vec u_j|_B)^{-1})$, $M (D (\vec u|_B)^{-1})\in L^1 (V)$ for all $j \in \N$ and 
\[
 M\left( D (\vec u_j|_B)^{-1} \right) \weakcs M \left( D (\vec u|_B)^{-1} \right) \quad \text{in } \mathcal{M} (V) 
\text{ as } j \to \infty .
\]
\end{description}
If, in addition, the sequence $\{ \det D (\vec u_j|_B)^{-1} \}_{j \in \N}$ is equiintegrable in $V$, then the convergence in {\bf (iii)-(a)} holds in the weak topology of $W^{1,1} (V, \R^n)$, and the convergence in {\bf (iii)-(b)} holds in the weak topology of $L^1 (V)$.

\item[(iv)]\label{item:thINV4} For a subsequence we have that $\chi_{\imT (\vec u_j, \Omega)} \to \chi_{\imT (\vec u, \Omega)}$ a.e.\ and in $L^1 (\R^n)$ as $j \to \infty$.

\end{description}
\end{prop}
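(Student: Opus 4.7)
The plan is to reduce every part of the statement to two structural facts: first, a Fubini/coarea argument yielding, on a conveniently chosen level set, simultaneous uniform convergence $\vec u_j \to \vec u$ and convergence of the cofactor traces; second, the classical stability of the topological degree under uniform boundary convergence. Once these are in hand, the identity $\imT(\vec u, U) = \{\vec y : \deg(\vec u, U, \vec y) \neq 0\}$, the coincidence $\imT(\vec u, U) = \imG(\vec u, U)$ a.e.\ from Proposition~\ref{th:char1}(iii), and the adjugate formula $(D \vec u)^{-1} \det D \vec u = \cof D \vec u$ will handle the rest.

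For part (i), I would start from $U \in \mathcal{U}^N_{\vec u}$ and a compact $K \subset \imT(\vec u, U)$. Since $K$ avoids $\vec u(\partial U)$ and the degree is locally constant off $\vec u(\partial U)$, I slightly perturb $U$ to some $U_t$ with $t>0$ small, still containing $K$ in its topological image. Using the continuous representative of $W^{1,A}$-maps on $C^1$ manifolds (\cite[Prop.~2.6]{HS}, available thanks to \eqref{eq:L2logL}), a Fubini-in-$t$ argument on a thin annular neighborhood of $\partial U$, the Orlicz--Poincar\'e inequality of Proposition~\ref{poincare}, and Vall\'ee Poussin's Theorem~\ref{lavpous} applied to the bounded families $\{A(\|D\vec u_j\|)\}$ and $\{|\cof D\vec u_j|\}$, I extract a subsequence and a radius along which $\vec u_j \to \vec u$ uniformly on $\partial U_t$. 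Classical degree stability then yields $\deg(\vec u_j, U_t, \cdot) = \deg(\vec u, U_t, \cdot) \neq 0$ on $K$, so $K \subset \imT(\vec u_j, U_t) \subset \imT(\vec u_j, \Omega)$. For (ii), I would pick a countable family of balls dense in $\Omega_{\mathrm{in}}$ witnessing the local injectivity of $\vec u$, apply the same slicing/diagonal procedure at each ball to obtain a common good radius lying in $\mathcal{U}^{N,\mathrm{in}}_{\vec u_j}$ for every $j$ (the injectivity transfer from $\vec u$ to $\vec u_j$ rests on the non-negativity of $\deg(\vec u_j, \cdot, \cdot)$ guaranteed by Theorem~\ref{thm:StroffoHenao}), and finally thin to a disjoint Vitali-type family exhausting $\Omega$ up to a null set.

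For (iii), on $B \in \mathcal{U}^{\mathrm{in}}_{\vec u} \cap \bigcap_j \mathcal{U}^{\mathrm{in}}_{\vec u_j}$ and $V \subset \imT(\vec u_j, B)$ for all $j$, I would use the change of variables (Proposition~\ref{prop:locINV}) to rewrite, for each $\vec f \in C^0_c(\R^n, \R^n)$ and $\phi \in C^0_c(V)$,
\[
\int_V \vec f\bigl((\vec u_j|_B)^{-1}(\vec y)\bigr)\,\phi(\vec y)\,\dd \vec y = \int_B \vec f(\vec x)\,\phi(\vec u_j(\vec x))\,\det D \vec u_j(\vec x)\,\dd \vec x,
\]
and represent the minors of $D(\vec u_j|_B)^{-1}$ via the Piola identity in terms of $\cof D\vec u_j$. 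This reduces weak-$*$ $BV$ and weak-$*$ $\mathcal{M}$ convergence of the inverses and their minors to the weak convergence of $D\vec u_j$ and $\cof D\vec u_j$ on $B$ (inherited from the $W^{1,A}$ weak convergence via \eqref{eq:L2logL}) together with the uniform boundary convergence secured in (ii). Equiintegrability of $\det D(\vec u_j|_B)^{-1}$ then upgrades the weak-$*$ $BV$ convergence to weak $W^{1,1}$ via Theorem~\ref{lavpous}. Finally, (iv) follows by applying (i) to an exhausting sequence of compact subsets $K_\ell \nearrow \imT(\vec u, \Omega)$ and (ii) to the exterior, combined with the area-formula identity $|\imT(\vec u_j, B_k)| = \int_{B_k} \det D\vec u_j$ on each injective piece and dominated convergence on $\R^n$ (all $\imT(\vec u_j, \Omega)$ sit in a common bounded set by Proposition~\ref{th:char1}(i) and the uniform $W^{1,A}$ bounds). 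The main obstacle I expect is the synchronization step in (i)--(ii): in the borderline regularity dictated by \eqref{eq:L2logL} one cannot appeal to the usual $W^{1,p}$ trace theory, so the common-good-radius selection for the whole sequence $\{\vec u_j\}$ must be engineered using the $\Delta_2$-condition, Proposition~\ref{poincare}, and the equi-absolute continuity furnished by Theorem~\ref{lavpous}.
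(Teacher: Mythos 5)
The paper does not prove this proposition at all: it is imported verbatim from \cite[Propositions~4.11 and 4.12]{HS} (itself an Orlicz version of \cite[Theorem~6.3]{HBMC}), so there is no in-paper argument to compare against. Your outline does follow the route taken in those references --- Fubini/coarea selection of good level sets $U_t$ on which $\vec u_j\to\vec u$ uniformly (via the a.e.\ boundedness of $\int_{\partial U_t}A(\|D\vec u_j\|)\,\dd\mathcal H^{n-1}$, the embedding of $W^{1,A}(\partial U_t)$ into continuous functions, and Arzel\`a--Ascoli), stability of the degree under uniform boundary convergence, and the change-of-variables identity to transfer weak convergence of $\det D\vec u_j$ and $\cof D\vec u_j$ to the inverses in part (iii). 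One technical point you should make explicit in (i): to conclude $K\subset\imT(\vec u_j,\Omega)$ from $K\subset\imT(\vec u_j,U_t)$ you need $U_t\in\mathcal U^N_{\vec u_j}$ for every $j$ (Definition~\ref{de:imTomega}), so the a.e.-$t$ selection must simultaneously enforce all conditions of Definition~\ref{df:good_open_sets} for the whole sequence; this is a countable union of null sets of bad $t$'s, hence harmless, but it is part of the ``synchronization'' you defer.

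The genuine gap is the injectivity transfer in (ii). You assert that $\vec u_j$ is one-to-one a.e.\ on the chosen balls because $\deg(\vec u_j,\cdot,\cdot)\ge 0$ by Theorem~\ref{thm:StroffoHenao}. Non-negativity of the degree is a property that \emph{every} map in $\mathcal A(\Omega)$ enjoys on \emph{every} ball, so it cannot single out balls on which $\vec u_j$ is a.e.\ injective; a.e.\ injectivity is an upper bound on multiplicity, not a sign condition. The correct mechanism is Proposition~\ref{th:char1}(iii): since $\deg(\vec u_j,B,\cdot)=\mathcal N_B$ a.e., a.e.\ injectivity of $\vec u_j$ in $B$ is equivalent to $\deg(\vec u_j,B,\cdot)\le 1$ a.e.; choosing $B\in\mathcal U^{N,\mathrm{in}}_{\vec u}$ gives $\deg(\vec u,B,\cdot)\le 1$ a.e., and the uniform convergence $\vec u_j\to\vec u$ on $\partial B$ forces $\deg(\vec u_j,B,\cdot)=\deg(\vec u,B,\cdot)$ for $j$ large, whence $\mathcal N_B(\vec u_j,\cdot)\le 1$ a.e.\ for those $j$ (the finitely many remaining indices are handled by further shrinking the balls inside $\bigcap_j\Omega^{j}_{\mathrm{in}}$, which has full measure). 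Since (iii) is stated on balls in $\bigcap_j\mathcal U^{\mathrm{in}}_{\vec u_j}$, this repair is needed before the rest of your plan goes through. A smaller omission: in (iv) your argument only delivers $\liminf_j\chi_{\imT(\vec u_j,\Omega)}\ge\chi_{\imT(\vec u,\Omega)}$ a.e.; the reverse ($\limsup$) direction requires comparing the measures $|\imT(\vec u_j,\Omega)|$ with $\int_\Omega\det D\vec u_j$ through the multiplicity $\mathcal N_\Omega$, and is not a direct consequence of dominated convergence.
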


\subsection{{The subclass $\mathcal{A}_\beta(\Omega)$:} cut-and-paste and composition with Lipschitz functions}\label{sec:cut}


{In order to prove the semicontinuity result, we need to establish some stability properties with respect to cut-and-paste operations and the composition with suitable smooth functions. To this aim, we restrict ourselves to the subclass $\mathcal{A}_{\beta}(\Omega)$ of $\mathcal{A}(\Omega)$ where we require, in addition, that
\begin{equation*}
\cof D\vec u\in \frac{\mbox{L}^{\frac{p}{n-1}}}{\mbox{Log}^{\beta}\mbox{L}}(\Omega,\Rnn)
\end{equation*}
where $p>n-1, \beta>0$.  We notice that we are requiring a smaller space for the integrability of the cofactor. Consequently, the space of test functions $\phi$ is  $W^1 {\mbox L}^{\frac{np}{n-p+1}} \mbox{Log}^{\beta}{\mbox L}(\Omega)$, that is larger than the Sobolev-Orlicz space associated to the function $e^{t^{\frac{n-1}{\alpha}}}-1$. For relations of Lebesgue (and Orlicz) with $exp_{\frac{n-1}{\alpha}}(\Omega)$ see Lemma~\ref{lem:inclusions}.}  \par
A first result - showing that when we glue two functions in the class $\mathcal{A}_{{\beta}}(\Omega)$ that coincide in a neighborhood of an open set, the resulting function is also in $\mathcal{A}_{{\beta}}(\Omega)$ - is essentially a rewriting of \cite[Lemma~3.8]{MCOl}. Indeed, the only condition to check is that $\mathcal{E}_\Omega(\vec w)=0$ (see the definition of $\vec w$ below), and this property only involves the test functions. Therefore the proof will be omitted.
\begin{lem}
Let $U,U'$ be open sets such that $U'\ssubset U\ssubset\Omega$, and let $\vec u\in\mathcal{A}_{{\beta}}(\Omega)$, $\vec v\in\mathcal{A}_{{\beta}}(U)$ satisfy $\vec u=\vec v$ a.e. in $U\backslash U'$. Then the function
\begin{equation*}
\vec w(\vec x):=
\begin{cases}
\vec v(\vec x), & \mbox{ if $\vec x\in U'$}\\
\vec u(\vec x), & \mbox{ if $\vec x\in \Omega\backslash U'$}
\end{cases}
\end{equation*}
belongs to $\mathcal{A}_{{\beta}}(\Omega)$.
\end{lem}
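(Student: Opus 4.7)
The plan is to verify that $\vec w$ satisfies the three defining properties of the class $\mathcal{A}_\beta(\Omega)$: membership in $W^{1,A}(\Omega,\R^n)$ with the prescribed Orlicz integrability of $\cof D\vec w$, positivity and $L^1$-integrability of $\det D\vec w$, and the vanishing of the surface energy $\mathcal{E}_\Omega(\vec w)=0$. Since the authors explicitly note that only the last item requires checking, the bulk of the argument is a cut-off decomposition on test functions.

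For the Sobolev and integrability conditions, the hypothesis $\vec u=\vec v$ a.e.\ on $U\setminus U'$ ensures that $\vec w$ coincides with $\vec v$ a.e.\ on all of $U$ and with $\vec u$ on $\Omega\setminus U'$. Hence $\vec w\in W^{1,A}(\Omega,\R^n)$ with weak derivative $D\vec w=D\vec v$ a.e.\ on $U$ and $D\vec w=D\vec u$ a.e.\ on $\Omega\setminus U'$, so the integrability of $D\vec w$ in $L^A$ together with the $\mathcal{A}_\beta$-integrability of $\cof D\vec w$ follows by adding the piecewise bounds. Similarly, $\det D\vec w\in L^1(\Omega)$ and $\det D\vec w>0$ a.e.

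The core point is to show $\mathcal{E}_\Omega(\vec w,\phi,\vec g)=0$ for every $\phi\in C^1_c(\Omega)$ and $\vec g\in C^1_c(\R^n,\R^n)$, whence $\mathcal{E}_\Omega(\vec w)=0$ by passing to the supremum. Since $U'\ssubset U$, I would fix a cut-off $\eta\in C^\infty_c(U)$ with $\eta\equiv 1$ on an open neighborhood $V$ of $\overline{U'}$ and split $\phi=\eta\phi+(1-\eta)\phi$. Observing that the integrand in the definition of $\mathcal{E}_\Omega$ is linear in the pair $(\phi,D\phi)$ once $\vec w$ and $\vec g$ are fixed, one obtains
$$\mathcal{E}_\Omega(\vec w,\phi,\vec g)=\mathcal{E}_\Omega(\vec w,\eta\phi,\vec g)+\mathcal{E}_\Omega(\vec w,(1-\eta)\phi,\vec g).$$
The test function $\eta\phi$ lies in $C^1_c(U)$, and $\vec w=\vec v$ a.e.\ on $U$, so the first summand equals $\mathcal{E}_U(\vec v,\eta\phi,\vec g)$, which vanishes by $\vec v\in\mathcal{A}_\beta(U)$. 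The remaining piece $(1-\eta)\phi$ belongs to $C^1_c(\Omega)$ and vanishes on $V\supset\overline{U'}$, so its support is contained in $\Omega\setminus U'$, where $\vec w=\vec u$; hence the second summand equals $\mathcal{E}_\Omega(\vec u,(1-\eta)\phi,\vec g)=0$ by $\vec u\in\mathcal{A}_\beta(\Omega)$. Adding the two identities gives the claim.

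There is no genuine obstacle in this argument: the only point requiring attention is the existence of the separating cut-off $\eta$, which is standard since $\overline{U'}\subset U$ is compact. This is precisely why the authors remark that the statement is essentially a rewriting of \cite[Lemma~3.8]{MCOl}, as the enhanced integrability of the cofactor in $\mathcal{A}_\beta$ also splits additively along the decomposition $\Omega=U'\cup(\Omega\setminus U')$ and plays no role in the identity $\mathcal{E}_\Omega(\vec w)=0$ beyond guaranteeing that the integrand in $\mathcal{E}_\Omega$ is meaningful.
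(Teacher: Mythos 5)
Your proof is correct and follows exactly the route the paper intends: the authors omit the argument precisely because, as they note, only $\mathcal{E}_\Omega(\vec w)=0$ needs checking and this reduces to a test-function decomposition as in \cite[Lemma~3.8]{MCOl}, which is the cut-off splitting $\phi=\eta\phi+(1-\eta)\phi$ you carry out, using linearity of $\mathcal{E}_\Omega(\vec w,\cdot,\vec g)$ in $(\phi,D\phi)$ and the locality of weak derivatives. Your remark that the gluing of the Sobolev, determinant and cofactor conditions is additive along $\Omega=U'\cup(\Omega\setminus\overline{U'})$ is also accurate.
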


Now, along the lines of the proof of \cite[Lemma~3.9]{MCOl}, we prove that $\mathcal{E}_\Omega(\vec u,\phi,\vec g)=0$ if $\vec u\in\mathcal{A}_{{\beta}}(\Omega)$ and {$\phi\in W^{1}_0\mbox{L}^{\frac{np}{n-p+1}} \mbox{Log}^{\beta}\mbox{L}(\Omega)$.}

\begin{lem}\label{lemzero}
Let $\vec u\in\mathcal{A}(\Omega)$, $\vec g\in C^1_c(\R^n,\R^n)$ and {$\phi\in W^{1}_0{\rm L}^{\frac{np}{n-p+1}} {\rm Log}^{\beta}{\rm L}(\Omega)\cap L^\infty(\Omega)$}. Then $\mathcal{E}_\Omega(\vec u,\phi,\vec g)=0$.
\end{lem}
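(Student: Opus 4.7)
The plan is to extend the identity $\mathcal{E}_\Omega(\vec u,\phi,\vec g)=0$---which holds whenever $\phi\in C^1_c(\Omega)$, by admissibility ($\mathcal{E}_\Omega(\vec u)=0$ together with linearity of $\mathcal{E}_\Omega(\vec u,\cdot,\vec g)$ in its second slot, the reverse inequality coming from $\phi\mapsto -\phi$)---to the larger test function space by density. Since the $N$-function $t\mapsto t^{np/(n-p+1)}\log^\beta(e+t)$ is $\Delta_2$, $C^\infty_c(\Omega)$ is dense in $W^{1}_0\mathrm{L}^{np/(n-p+1)}\mathrm{Log}^\beta\mathrm{L}(\Omega)$. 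Concretely, I would select an exhaustion $K_k\ssubset\Omega$ with $\dist(K_k,\partial\Omega)>2/k$ and define $\phi_k:=\eta_{1/k}*(\chi_{K_k}\phi)$ for a standard mollifier $\eta_{1/k}$. Then $\phi_k\in C^\infty_c(\Omega)$, $\phi_k\to\phi$ a.e.\ and in the Orlicz--Sobolev norm, and $\|\phi_k\|_\infty\le\|\phi\|_\infty$ because convolution against a non-negative unit-mass kernel is an averaging operation. In particular $\mathcal{E}_\Omega(\vec u,\phi_k,\vec g)=0$ for every $k$.

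The remaining task is the passage to the limit. Splitting $\mathcal{E}_\Omega(\vec u,\phi,\vec g)=T_1(\phi)+T_2(\phi)$ with
\begin{equation*}
T_1(\phi):=\int_\Omega \cof D\vec u\cdot(\vec g(\vec u)\otimes D\phi)\,\dd\vec x,\qquad T_2(\phi):=\int_\Omega \det D\vec u\,\phi\,\mathrm{div}\,\vec g(\vec u)\,\dd\vec x,
\end{equation*}
the convergence $T_2(\phi_k)\to T_2(\phi)$ follows at once from Lebesgue's dominated convergence theorem, with integrable majorant $\|\phi\|_\infty\|\mathrm{div}\,\vec g\|_\infty|\det D\vec u|$ (using $\det D\vec u\in L^1(\Omega)$). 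For $T_1$ I would use the additional integrability encoded in the subclass $\mathcal{A}_\beta(\Omega)$ of Section~\ref{sec:cut}, namely $\cof D\vec u\in \mathrm{L}^{p/(n-1)}/\mathrm{Log}^\beta\mathrm{L}(\Omega,\Rnn)$. This Orlicz space was chosen precisely to sit, modulo equivalence of $N$-functions, in H\"older duality with the space $\mathrm{L}^{np/(n-p+1)}\mathrm{Log}^\beta\mathrm{L}(\Omega,\R^n)$ to which $D\phi_k-D\phi$ belongs and in which it vanishes. Since $\vec g(\vec u)$ is globally bounded (as $\vec g\in C^1_c(\R^n,\R^n)$), the generalized H\"older inequality gives
\begin{equation*}
|T_1(\phi_k)-T_1(\phi)|\le 2\|\vec g\|_\infty\|\cof D\vec u\|_{\mathrm{L}^{p/(n-1)}/\mathrm{Log}^\beta\mathrm{L}}\|D(\phi_k-\phi)\|_{\mathrm{L}^{np/(n-p+1)}\mathrm{Log}^\beta\mathrm{L}}\longrightarrow 0.
\end{equation*}
Combining the two limits, $0=\mathcal{E}_\Omega(\vec u,\phi_k,\vec g)\to\mathcal{E}_\Omega(\vec u,\phi,\vec g)$, which proves the claim.

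The main obstacle is ensuring that the smoothing step simultaneously preserves the $L^\infty$ bound, converges in the Orlicz--Sobolev norm, and produces a.e.\ convergence. Mollification with a non-negative unit-mass kernel delivers all three when the underlying $N$-function is $\Delta_2$; without $\Delta_2$, norm-convergence of mollifiers would fail and one would need a more delicate combined truncation-and-mollification device (of the kind used in \cite{MCOl} for the polynomial scale). A secondary subtle point is the bookkeeping of logarithmic factors needed to verify that $t\mapsto t^{p/(n-1)}/\log^\beta(e+t)$ and $s\mapsto s^{np/(n-p+1)}\log^\beta(e+s)$ really are in Young duality (at least asymptotically), so that the H\"older pairing used for $T_1$ is licit; this is exactly what motivates the specific form of the cofactor integrability in the definition of $\mathcal{A}_\beta(\Omega)$.
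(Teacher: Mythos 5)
Your overall strategy coincides with the paper's proof of this lemma: approximate $\phi$ by smooth compactly supported test functions $\phi_k$, use admissibility to get $\mathcal{E}_\Omega(\vec u,\phi_k,\vec g)=0$, and pass to the limit by pairing $\cof D\vec u\in \mathrm{L}^{p/(n-1)}/\mathrm{Log}^\beta\mathrm{L}(\Omega,\Rnn)$ (the extra integrability from $\mathcal{A}_\beta(\Omega)$ --- note the lemma as printed says $\mathcal{A}(\Omega)$, but the hypothesis actually used, here and in the paper, is the $\mathcal{A}_\beta$ one) against $D\phi_k-D\phi$ via the generalized H\"older inequality, and by handling the determinant term with the boundedness of $\phi_k$ and $\det D\vec u\in L^1(\Omega)$. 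The paper phrases the second limit through weak-$*$ convergence of $\phi_k$ in $L^\infty$ where you use a uniform bound plus a.e.\ convergence and dominated convergence; the effect is identical.

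The one step that fails as written is the explicit construction $\phi_k:=\eta_{1/k}*(\chi_{K_k}\phi)$. Multiplication by $\chi_{K_k}$ destroys weak differentiability: the distributional gradient of $\chi_{K_k}\phi$ carries a singular part supported on $\partial K_k$, which is an \emph{interior} hypersurface where $\phi$ has no reason to be small. Consequently $D\phi_k=(D\eta_{1/k})*(\chi_{K_k}\phi)$ contains a contribution of magnitude of order $k$ on a shell of width of order $1/k$ around $\partial K_k$; since the exponent $\frac{np}{n-p+1}$ exceeds $1$, this contribution diverges in the Orlicz norm instead of vanishing, so $D\phi_k\not\to D\phi$ in $\mathrm{L}^{\frac{np}{n-p+1}}\mathrm{Log}^\beta\mathrm{L}(\Omega,\R^n)$. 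The repair is standard and cheap: $W^1_0\mathrm{L}^{\frac{np}{n-p+1}}\mathrm{Log}^\beta\mathrm{L}(\Omega)$ is by definition the closure of $C^\infty_c(\Omega)$ in that norm, so an approximating sequence $\psi_k$ exists by fiat, and the $\Delta_2$ property makes norm convergence equivalent to modular convergence. To restore the uniform $L^\infty$ control, replace $\psi_k$ by $T\circ\psi_k$ with $T\in C^1(\R)$ a smooth truncation satisfying $T(s)=s$ for $|s|\le\|\phi\|_\infty$, $\|T\|_\infty\le\|\phi\|_\infty+1$ and $\|T'\|_\infty\le1$: this preserves compact support, gives $\|T\circ\psi_k\|_\infty\le\|\phi\|_\infty+1$, and $T\circ\psi_k\to T\circ\phi=\phi$ in the Orlicz--Sobolev norm (the gradient difference splits as $T'(\psi_k)(D\psi_k-D\phi)+(T'(\psi_k)-T'(\phi))D\phi$, the first term controlled by $\|T'\|_\infty\le 1$ and the second by dominated convergence along a subsequence). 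With that substitution your limit passage goes through unchanged and the proof matches the paper's.
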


\proof

It will suffice to construct a sequence of functions $\phi_j$ in $C^1_c(\Omega)$ such that $\phi_j\to\phi$ in {$W^{1}{\rm L}^{\frac{np}{n-p+1}} {\rm Log}^{\beta}{\rm L}(\Omega)$} and $\phi_j\rightharpoonup^*\phi$ in $L^\infty(\Omega)$ as $j\to+\infty$. Indeed, by Def.~\ref{admissible} we have that $\mathcal{E}_\Omega(\vec u,\phi_j,\vec g)=0$ for every $j\in\N$ and, as a consequence of H\"older inequality,
\begin{equation}
\begin{split}
&\left|\int_\Omega \mbox{cof}\,D\vec u(\vec x)\cdot(\vec g(\vec u(\vec x))\otimes D\phi_j(\vec x))\,\mathrm{d}\vec x - \int_\Omega \mbox{cof}\,D\vec u(\vec x)\cdot(\vec g(\vec u(\vec x))\otimes D\phi(\vec x))\,\mathrm{d}\vec x\right|\\
&=\left|\int_\Omega \mbox{cof}\,D\vec u(\vec x)\cdot(\vec g(\vec u(\vec x))\otimes (D\phi_j(\vec x)-D\phi(\vec x))\,\mathrm{d}\vec x\right|\\
&\leq C \|\mbox{cof}\,D\vec u\|_{{\frac{{\rm L}^{\frac{p}{n-1}}}{{\rm Log}^{\beta}{\rm L}}(\Omega,\Rnn)}}\|D\phi_j-D\phi\|_{{{\rm L}^{\frac{np}{n-p+1}}{\rm Log}^{\beta}{\rm L}(\Omega,\R^n)}}\to0.
\end{split}
\end{equation}
Moreover, since $\mbox{det}D\vec u\in L^1(\Omega)$ and $\mbox{div}(\vec g\circ\vec u)$ is bounded, we have
\begin{equation}
\begin{split}
&\left|\int_\Omega \mbox{det}D\vec u(\vec x)\phi_j(\vec x)\mbox{div}\vec g(\vec u(\vec x))\,\mathrm{d}\vec x - \int_\Omega \bigl[\mbox{det}D\vec u(\vec x)\phi(\vec x)\mbox{div}\vec g(\vec u(\vec x))\bigr]\,\mathrm{d}\vec x\right|\\
&= \left|\int_\Omega \mbox{det}D\vec u(\vec x)\mbox{div}\vec g(\vec u(\vec x))(\phi_j(\vec x)-\phi(\vec x))\,\mathrm{d}\vec x\right|\to0.
\end{split}
\end{equation}

\endproof

With the following lemma, we prove that the class $\mathcal{A}_{{\beta}}(\Omega)$ is stable under the composition with suitable Lipschitz functions.

\begin{lem}\label{lem3.10}
Let $\vec u\in\mathcal{A}_{{\beta}}(\Omega)$, $B\ssubset\Omega$ a ball, $\vecg\rho:B\longrightarrow\overline{B}$ Lipschitz such that $\vecg\rho_{|_{\partial B}}={\vec {id}}_{|_{\partial B}}$, $\emph{det}D\vecg\rho>0$ a.e. and
\begin{equation}
{\int_B\log^\beta\left(e+\frac{1}{\det D\vecg\rho(\vec x)}\right)(\det D\vecg\rho(\vec x))^{-\frac{(p-1)(n+1)}{n-p+1}}\,\dd \vec x <+\infty.} 
\label{conddet}
\end{equation}
Define 
\begin{equation}
\vec z:=
\begin{cases}
\vec u\circ\vecg\rho & \mbox{ in $B$,}\\
\vec u & \mbox{ in $\Omega\backslash B$.}
\end{cases}
\end{equation}
Assume that $\vec z\in W^{1,A}(\Omega,\R^n)$, $D\vec z=(D\vec u\circ\vecg\rho)D\vecg\rho$ in $B$ and $\emph{det}D\vec z\in L^1(\Omega)$. Then $\vec z\in\mathcal{A}_{{\beta}}(\Omega)$.
\end{lem}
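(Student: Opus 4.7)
My plan is to verify each of the conditions in the definition of $\mathcal{A}_{\beta}(\Omega)$. The regularity $\vec z\in W^{1,A}(\Omega,\R^n)$ and $\det D\vec z\in L^1(\Omega)$ are part of the hypotheses, and $\det D\vec z>0$ a.e.\ follows from the factorization $\det D\vec z=(\det D\vec u\circ\vecg\rho)\,\det D\vecg\rho$ on $B$ together with Lusin's $N^{-1}$ condition for the Lipschitz map $\vecg\rho$, which has positive Jacobian a.e. The two substantive properties left to establish are the vanishing of the energy, $\mathcal{E}_\Omega(\vec z)=0$, and the cofactor integrability $\cof D\vec z\in\mathrm{L}^{p/(n-1)}/\mathrm{Log}^{\beta}\mathrm{L}(\Omega,\Rnn)$.

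For the energy condition I would follow the strategy of \cite[Lemma~3.10]{MCOl} adapted to the logarithmic scale. Fix $\phi\in C^1_c(\Omega)$ and $\vec g\in C^1_c(\R^n,\R^n)$. Splitting $\Omega=(\Omega\setminus B)\cup B$, using that $\vec z=\vec u$ off $B$ and $\mathcal{E}_\Omega(\vec u,\phi,\vec g)=0$ reduces the problem to comparing $\mathcal{E}_B(\vec u\circ\vecg\rho,\phi,\vec g)$ with $\mathcal{E}_B(\vec u,\phi,\vec g)$. Applying the chain rule $D\vec z=(D\vec u\circ\vecg\rho)D\vecg\rho$ together with the multiplicative formulas for $\det$ and $\cof$, then changing variables $\vec y=\vecg\rho(\vec x)$ --- valid because $\vecg\rho$ is Lipschitz with $\det D\vecg\rho>0$ a.e.\ and $\vecg\rho|_{\partial B}=\vec{id}$ forces $\vecg\rho$ to be a.e.\ bijective from $B$ onto $B$ by a degree argument --- and introducing $\tilde\phi:=\phi\circ\vecg\rho^{-1}$ on $B$, the Piola-type identity
\[
 \det D\vecg\rho(\vec x)\,D\tilde\phi(\vecg\rho(\vec x))=\cof D\vecg\rho(\vec x)\,D\phi(\vec x)
\]
yields $\mathcal{E}_B(\vec u\circ\vecg\rho,\phi,\vec g)=\mathcal{E}_B(\vec u,\tilde\phi,\vec g)$. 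Hence $\mathcal{E}_\Omega(\vec z,\phi,\vec g)=\mathcal{E}_\Omega(\vec u,\tilde\phi-\phi,\vec g)$, where $\tilde\phi-\phi$ is extended by zero outside $B$; this extension has vanishing trace on $\partial B$ because $\vecg\rho|_{\partial B}=\vec{id}$ implies $\tilde\phi=\phi$ there.

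To close, I invoke Lemma~\ref{lemzero} applied to $\vec u$ with test function $\tilde\phi-\phi$, which requires that this extension lie in $W^{1}_0\mathrm{L}^{\frac{np}{n-p+1}}\mathrm{Log}^{\beta}\mathrm{L}(\Omega)\cap L^\infty(\Omega)$. Boundedness is immediate; from the pointwise bound $\|D\tilde\phi(\vec y)\|\le\|D\phi\|_\infty\|\cof D\vecg\rho(\vec x)\|/\det D\vecg\rho(\vec x)$ with $\vec x=\vecg\rho^{-1}(\vec y)$, the Lipschitz bound on $\cof D\vecg\rho$, and a change of variables back to $\vec x$, the modular of $D\tilde\phi$ is controlled by a constant multiple of
\[
 \int_B \frac{\log^\beta\bigl(e+(\det D\vecg\rho(\vec x))^{-1}\bigr)}{(\det D\vecg\rho(\vec x))^{\frac{np}{n-p+1}-1}}\,\dd\vec x,
\]
which is finite by the exponent identity $\frac{np}{n-p+1}-1=\frac{(p-1)(n+1)}{n-p+1}$ and hypothesis \eqref{conddet}. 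The cofactor integrability of $\vec z$ is obtained by the same mechanism applied to $\cof D\vec z=(\cof D\vec u\circ\vecg\rho)\cof D\vecg\rho$, using the uniform bound on $\cof D\vecg\rho$ and a change of variables, and then splitting the resulting weighted integrand by Young's inequality so that one factor is absorbed by $\vec u\in\mathcal{A}_{\beta}(\Omega)$ and the other by \eqref{conddet}. The main obstacle is the rigorous justification of these change-of-variables manipulations at the Lipschitz/Orlicz-Sobolev regularity level --- especially the a.e.\ bijectivity of $\vecg\rho:B\to B$, secured by the degree computation and Lusin's $N^{-1}$ condition, and the pointwise validity of the chain rule $D(\vec u\circ\vecg\rho)=(D\vec u\circ\vecg\rho)D\vecg\rho$ a.e.\ on $B$, which is already built into the hypotheses of the lemma.
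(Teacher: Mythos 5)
Your treatment of the energy condition is essentially the paper's own proof: both arguments reduce $\mathcal{E}_\Omega(\vec z,\phi,\vec g)=0$ to $\mathcal{E}_\Omega(\vec u,\tilde\phi,\vec g)=0$ for a test function built from $\phi\circ\vecg\rho^{-1}$, invoke Lemma~\ref{lemzero}, and justify the admissibility of that test function through the modular estimate
\[
\int_B\tilde A\bigl(\|D(\phi\circ\vecg\rho^{-1})\|\bigr)\,\dd\vec y\;\le\;c\int_B\tilde A\bigl((\det D\vecg\rho)^{-1}\bigr)\det D\vecg\rho\,\dd\vec x,
\qquad \tilde A(t)=t^{\frac{np}{n-p+1}}\log^\beta(e+t),
\]
where your exponent identity $\frac{np}{n-p+1}-1=\frac{(p-1)(n+1)}{n-p+1}$ is exactly how \eqref{conddet} enters in the paper. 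The degree argument for the a.e.\ bijectivity of $\vecg\rho$, the Piola-type identity, and the use of $\vecg\rho^{-1}\in W^{1,1}(B)$ are likewise the ingredients the paper relies on.

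The one step that does not close as you sketch it is the cofactor clause of $\mathcal{A}_\beta$ (on which, to be fair, the paper's proof is silent: it declares the test-function regularity to be ``the only thing to check''). From $\cof D\vec z=(\cof D\vec u\circ\vecg\rho)\cof D\vecg\rho$ and the boundedness of $\cof D\vecg\rho$, the change of variables leaves you with $\int_B\Phi\bigl(C\|\cof D\vec u(\vec y)\|\bigr)\,\bigl(\det D\vecg\rho(\vecg\rho^{-1}(\vec y))\bigr)^{-1}\dd\vec y$, with $\Phi(t)=t^{p/(n-1)}\log^{-\beta}(e+t)$. The hypothesis $\vec u\in\mathcal{A}_\beta(\Omega)$ gives only $\Phi(\|\cof D\vec u\|)\in L^1(B)$, while \eqref{conddet}, transported to the $\vec y$ variable, places the weight $(\det D\vecg\rho)^{-1}\circ\vecg\rho^{-1}$ in $L^{\frac{np}{n-p+1}}{\rm Log}^\beta{\rm L}(B)$ but not in $L^\infty(B)$. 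Any Young or H\"older splitting of this product therefore requires the first factor to lie in the conjugate (exponential-type) class, i.e., strictly more integrability of $\cof D\vec u$ than is assumed, so the ``absorption'' you describe does not go through without an essential lower bound on $\det D\vecg\rho$ or some additional hypothesis. You should either restrict to the situation where $\det D\vecg\rho$ is bounded away from zero on the relevant set, or acknowledge this as an open point shared with the original argument.
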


\proof
We can perform a similar argument as for \cite[Lemma~3.10]{MCOl}, exploiting the change of variable formula, the chain rule and the fact that $\vecg\rho^{-1}\in W^{1,1}(B)$ (\cite[Theorem~8]{Sverak88}). We should only check that if $\phi\in C^1_c(\Omega)$, then {$\phi\circ\vecg\rho^{-1}\in W^{1}{\rm L}^{\frac{np}{n-p+1}} {\rm Log}^{\beta}{\rm L}(B)$}. Indeed, {setting $\tilde{A}(t):=t^{\frac{np}{n-p+1}}\log^\beta(e+t)$}, we have

\begin{equation}
\begin{split}
&\int_B{\tilde{A}}(\|D(\phi\circ\vecg\rho^{-1})(\vec y)\|)\,\dd \vec y = \int_B{\tilde{A}}(\|D\phi\circ\vecg\rho^{-1}(\vec y)\|\|D\vecg\rho^{-1}(\vec y)\|)\,\dd \vec y\\
&\leq  c_{\phi,\rho}\int_B{\tilde{A}}(\|D\vecg\rho^{-1}(\vec y)\|)\,\dd \vec y = c_{\phi,\rho}\int_B{\tilde{A}}(\|D\vecg\rho^{-1}(\vecg\rho(\vec x))\|)\det D\vecg\rho(\vec x)\,\dd \vec x\\
& = c_{\phi,\rho}\int_B{\tilde{A}}(\|\cof D\vecg\rho(\vec x)\|(\det D\vecg\rho(\vec x))^{-1})\det D\vecg\rho(\vec x)\,\dd \vec x\\
& \leq c'_{\phi,\rho}\int_B{\tilde{A}}((\det D\vecg\rho(\vec x))^{-1})\det D\vecg\rho(\vec x)\,\dd \vec x <+\infty,
\end{split}
\label{stimata}
\end{equation}
whence {$D(\phi\circ\vecg\rho^{-1})\in{\rm L}^{\frac{np}{n-p+1}} {\rm Log}^{\beta}{\rm L}(B,\R^n)$} under assumption \eqref{conddet}. In \eqref{stimata}, the constants $c_{\phi,\rho}, c'_{\phi,\rho}$ depend on ${\tilde{A}}$, $\|D\phi\circ\vecg\rho^{-1}\|_\infty$ and $\|\cof D\vecg\rho\|_\infty$. Now, setting $\tilde{\phi}=\phi\circ\vecg\rho^{-1}$ on $B$, $\tilde{\phi}=\phi$ on $\Omega\backslash B$, since $\vecg\rho_{|_{\partial B}}={\vec {id}}_{|_{\partial B}}$ we have that {$\tilde{\phi}\in W_0^{1}{\rm L}^{\frac{np}{n-p+1}} {\rm Log}^{\beta}{\rm L}(\Omega)$} and, in view of Lemma~\ref{lemzero}, $\mathcal{E}_\Omega(\vec z,{\phi},\vec g)=\mathcal{E}_\Omega(\vec u,\tilde{\phi},\vec g)=0$. Thus, $\vec z\in\A_{{\beta}}(\Omega)$.

\endproof


\subsection{Polyconvexity, quasiconvexity and tangential quasiconvexity}\label{sec:polyconv}

In this section, we recall the definitions of polyconvexity, quasiconvexity and tangential quasiconvexity (see, e.g., \cite{Ball1, BaMu84, Daco, DaFoMaTr99}) adapted to our setting.

Let $\tau$ be the number of minors of an $n \times n$ matrix, and denote by $\vec M(\vec F) \in \R^{\tau}$ the collection of all the minors of an $\vec F \in \Rnn$ in a given order such that its last component is $\det \vec F$. 
\begin{defn}
A Borel function $W: \Rnn \to \R \cup \{\infty\}$ is polyconvex if there exists a convex function $\Phi : \R^{\tau} \to \R \cup \{\infty\}$ such that $W(\vec F) = \Phi (\vec M(\vec F))$ for all $\vec F \in \Rnn$.
\end{defn}

We recall the classical concept of \emph{quasiconvexity}, adapted to the case of functions that can take infinite values (see, e.g., \cite{BaMu84}).

\begin{defn}\label{de:quasiconvex}
A Borel function $W: \Rnn \to \R \cup \{\infty\}$ is quasiconvex if for all $\vec F \in \Rnn$ and all $\vecg\varphi \in W^{1, \infty} (B_1, \R^n)$ with $\vecg\varphi (\vec x) = \vec F\vec x$ on $\partial B_1$ in the sense of traces, we have
\[
 W(\vec F) \leq \fint_{B_1} W(D\vecg\varphi(\vec x)) \, \dd \vec x .
\]
\end{defn}

As for the definition of \emph{quasiconvexification}, the natural one corresponding to Definition \ref{de:quasiconvex} is given in \cite[Definition~2.3]{CoDo15} and reads as follows.

\begin{defn}\label{de:quasiconvexification}
The quasiconvexification $W^{qc}: \Rnn \to \R \cup \{\infty\}$ of a Borel function $W : \Rnn \to \R \cup \{\infty\}$
is defined as
\[
 W^{qc} (\vec F) := \inf \left\{\fint_{B_1} W(D\vecg\varphi(\vec x)) \, \dd \vec x: \, \vecg\varphi\in W^{1,\infty}(B_1,\R^{n}), \, \vecg\varphi(\vec x)=\vec F\vec x \text{ on } \partial B_1\right\} .
\] 
\end{defn}

For functions $W=W(\cdot,\vec n): \Rnn \times \mS^{n-1} \to \R \cup \{\infty\}$, the definitions of polyconvexity, quasiconvexity and of quasiconvexification $W^{qc}$ always refer to the first variable. It can be proved that if $W: \Rnn \times \mS^{n-1} \to \R \cup \{\infty\}$ is quasiconvex, then it is continuous (see \cite[Proposition~2.4]{MCOl} for details).
\begin{prop}\label{pr:Wqccont}
Assume that $W: \R^{n\times n}_{+} \times \mS^{n-1} \to [0,\infty)$ is continuous and there exists an $\omega:[0,2] \to [0,\infty)$ with $\lim_{t\to 0} \omega(t)=0$ such that for all $\vec F \in \Rnn_+$ and $\vn,\vec m \in \mS^{n-1}$,
\begin{equation}\label{eq:modcontW}
 \left| W(\vec F,\vn)-W(\vec F,\vec m) \right|\leq \omega \left( \|\vn-\vec m\| \right) W(\vec F,\vn)  .
\end{equation}
Consider the extension of $W$ by infinity outside $\R^{n\times n}_{+} \times \mS^{n-1}$.
Then $W^{qc}|_{\R^{n\times n}_{+} \times \mS^{n-1}}$ is continuous.
\end{prop}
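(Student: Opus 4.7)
The plan is to decompose the continuity statement into a continuity in the director variable $\vn$ (transferred from hypothesis \eqref{eq:modcontW} to $W^{qc}$) and a continuity in the matrix variable $\vec F$ on the open set $\Rnn_+$ (from classical regularity of quasiconvex envelopes), and then glue the two by a standard triangle inequality.

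For the first part, I would transfer the modulus of continuity in $\vn$ from $W$ to $W^{qc}$. Fix $\vec F \in \Rnn_+$ and $\vn, \vec m \in \mS^{n-1}$, and consider any $\vecg\varphi \in W^{1,\infty}(B_1, \R^n)$ admissible in the definition of $W^{qc}(\vec F, \cdot)$. Test maps with $\det D\vecg\varphi \le 0$ on a set of positive measure contribute $+\infty$ to the integral and can be discarded, so we may assume $D\vecg\varphi(\vec x) \in \Rnn_+$ a.e.; applying \eqref{eq:modcontW} pointwise yields
\[
W(D\vecg\varphi(\vec x),\vec m) \leq (1+\omega(\|\vn-\vec m\|))\,W(D\vecg\varphi(\vec x),\vn) \quad \text{a.e.\ in } B_1.
\]
Averaging over $B_1$ and taking the infimum over admissible $\vecg\varphi$ gives $W^{qc}(\vec F,\vec m) \leq (1+\omega(\|\vn-\vec m\|))\,W^{qc}(\vec F,\vn)$; swapping $\vn$ and $\vec m$ (the hypothesis holds for any ordered pair) produces the symmetric bound, hence
\[
|W^{qc}(\vec F,\vn)-W^{qc}(\vec F,\vec m)| \leq \omega(\|\vn-\vec m\|)\,\min\{W^{qc}(\vec F,\vn),\,W^{qc}(\vec F,\vec m)\}.
\]

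For the second part, I would argue continuity of $\vec F \mapsto W^{qc}(\vec F,\vn)$ on $\Rnn_+$ for each fixed $\vn$. The envelope $W^{qc}(\cdot,\vn)$ is quasiconvex on $\Rnn$ (with values in $[0,\infty]$), hence rank-one convex. Testing with the affine map $\vecg\varphi(\vec x)=\vec F\vec x$ gives $W^{qc}(\vec F,\vn) \leq W(\vec F,\vn)$, and the continuity of $W$ on $\Rnn_+ \times \mS^{n-1}$ implies that $W^{qc}(\cdot,\vn)$ is locally bounded on the open set $\Rnn_+$. By the standard regularity result of Dacorogna (see, e.g., \cite[Theorem~2.31]{Daco}), a locally bounded rank-one convex function on an open subset of $\Rnn$ is locally Lipschitz, hence continuous.

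To conclude, for any sequence $(\vec F_k,\vn_k) \to (\vec F,\vn)$ with $\vec F \in \Rnn_+$, I would apply the triangle inequality
\[
|W^{qc}(\vec F_k,\vn_k)-W^{qc}(\vec F,\vn)| \leq |W^{qc}(\vec F_k,\vn_k)-W^{qc}(\vec F_k,\vn)| + |W^{qc}(\vec F_k,\vn)-W^{qc}(\vec F,\vn)|.
\]
The first term is bounded by $\omega(\|\vn_k-\vn\|)\,W^{qc}(\vec F_k,\vn)$, which vanishes since $W^{qc}(\vec F_k,\vn)$ converges to the finite value $W^{qc}(\vec F,\vn)$ by the second step and $\omega(\|\vn_k-\vn\|)\to 0$; the second term vanishes directly by the second step. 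The main delicate point in this program is the rank-one convexity of $W^{qc}(\cdot,\vn)$ in view of the fact that $W$ is extended by $\infty$ outside $\Rnn_+ \times \mS^{n-1}$; justifying that the infimum formula of Definition~\ref{de:quasiconvexification} yields an actual quasiconvex function requires invoking the classical equivalence between the envelope formula and the supremum of all quasiconvex minorants, which relies on nonnegativity of $W$ and on the local boundedness supplied by the trivial upper bound $W(\vec F,\vn)$.
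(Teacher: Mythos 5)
Your decomposition is the natural one, and your first step is correct: discarding test maps whose gradient leaves $\Rnn_+$ on a set of positive measure, applying \eqref{eq:modcontW} pointwise, averaging and passing to infima does transfer the modulus of continuity to the envelope, yielding $|W^{qc}(\vec F,\vn)-W^{qc}(\vec F,\vec m)|\leq \omega(\|\vn-\vec m\|)\min\{W^{qc}(\vec F,\vn),W^{qc}(\vec F,\vec m)\}$; combined with the finiteness $W^{qc}(\vec F,\vn)\leq W(\vec F,\vn)$ and continuity in $\vec F$, the final triangle-inequality gluing is sound. This part matches the argument behind \cite[Proposition~2.4]{MCOl}, which the paper cites in lieu of a proof.

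The genuine gap is your second step, exactly the point you flag at the end and then dismiss. Since $W$ is extended by $+\infty$ on $\{\det\vec F\leq 0\}$, it is \emph{not known} that the infimum formula of Definition~\ref{de:quasiconvexification} defines a quasiconvex (or even rank-one convex) function: the classical identification of that formula with the supremum of all quasiconvex minorants is established for finite-valued, locally bounded integrands, and its proof (patching test maps via Vitali coverings, or the lamination construction for rank-one convexity) breaks down here because modifying a test map on an arbitrarily small set can send the integral to $+\infty$ once the perturbed gradients leave $\Rnn_+$. This is precisely why Theorem~\ref{relaxation} must \emph{assume} that $W^{qc}$ is polyconvex rather than deduce quasiconvexity from the definition, and why \cite{CoDo15} treat the quasiconvexity of $W^{qc}$ as an open issue. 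Hence the chain ``quasiconvex $\Rightarrow$ rank-one convex $\Rightarrow$ locally Lipschitz'' is not available, and your proposed fix invokes a theorem that does not cover extended-real-valued $W$. The continuity of $\vec F\mapsto W^{qc}(\vec F,\vn)$ on $\Rnn_+$ must instead be proved directly, e.g.\ by perturbing near-optimal test maps multiplicatively, $\vecg\varphi\mapsto \vec F\vec F_0^{-1}\vecg\varphi$, which preserves both the affine boundary datum and the constraint $\det D\vecg\varphi>0$ a.e.; controlling $\fint_{B_1} W(\vec F\vec F_0^{-1}D\vecg\varphi)\,\dd\vec x$ in terms of $\fint_{B_1} W(D\vecg\varphi)\,\dd\vec x$ is where structural conditions of the type \eqref{condh}--\eqref{condW} (in particular the submultiplicativity of $h$) enter in \cite{CoDo15,MCOl}. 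As written, your argument does not establish continuity in the matrix variable.
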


To conclude this section, we introduce the concept of \emph{tangential quasiconvexity} and \emph{tangential quasiconvexification}, where the term ``tangential'' is referred to the manifold $\mS^{n-1}$.

For each $\vec z \in \mS^{n-1}$ we denote the tangent space of $\mS^{n-1}$ at $\vec z$ by $T_{\vec z} \mS^{n-1}$.
Given an Orlicz-Sobolev function $\vn$ defined in an open set $U \subset \R^n$ such that $\vn (\vec y) \in \mS^{n-1}$ for a.e.\ $\vec y \in U$, it holds that $D \vn (\vec y) \in (T_{\vn(\vec y)} \mS^{n-1})^n$ for a.e.\ $\vec y \in U$.
Therefore, a function $V=V(\vn(\vec y),D\vn(\vec y))$ 
need only be defined in
\[
 T^n \mS^{n-1} := \left\{ (\vec z, \vecg\xi) : \,\vec z \in \mS^{n-1}, \, \vecg\xi \in (T_{\vec z} \mS^{n-1})^n \right\} .
\]
Thus, we consider a Borel function $V : T^n \mS^{n-1} \to [0, \infty)$.
The following definition (see \cite[Def. 2.6]{MCOl}) extends the one given in \cite{DaFoMaTr99} when $V$ does not depend on the first variable.
\begin{defn}\label{defn:tangq}
Let $V : T^n \mS^{n-1} \to [0, \infty)$ be a Borel function.
\begin{enumerate}
\item[(a)]
$V$ is tangentially quasiconvex if for all $(\vec z, \vecg\xi) \in T^n \mS^{n-1}$ and all $\vecg\varphi \in W^{1,\infty} (B_1, T_{\vec z}\mS^{n-1})$ with $\vecg\varphi (\vec y) = \vecg\xi \vec y$ on $\partial B_1$ in the sense of traces we have
\[
 V (\vec z,\vecg\xi) \leq \fint_{B_1} V(\vec z, D\vecg\varphi(\vec y))\, \dd \vec y .
\]
\item[(b)]
The tangential quasiconvexification $V^{tqc} : T^n \mS^{n-1} \to [0, \infty)$ of $V$ is
\begin{align*}
 & V^{tqc} (\vec z,\vecg\xi) \\
 & := \inf \left\{ \fint_{B_1} \! V(\vec z, D\vecg\varphi(\vec y))\, \dd \vec y : \, \vecg\varphi\in W^{1,\infty} (B_1, T_{\vec z}\mS^{n-1}) , \, \vecg\varphi (\vec y) = \vecg \xi \vec y \text{ on } \partial B_1 \right\}.
\end{align*}
\end{enumerate}
\end{defn}

Note that the fact $\vecg\varphi \in W^{1,\infty} (B_1, T_{\vec z}\mS^{n-1})$ implies $D\vecg\varphi (\vec y) \in (T_{\vec z}\mS^{n-1})^n$ for a.e.\ $\vec y\in B_1$.
From the definitions, it is immediate to check that $V^{tqc}$ is tangentially quasiconvex and that $V$ is tangentially quasiconvex if and only if $V = V^{tqc}$.\\

The next theorem extends to the Orlicz-Sobolev setting the main results of \cite{DaFoMaTr99}, obtained under standard $p$-growth assumptions; again, the formulation is adapted to cover a dependence of $V$ on the first variable as well.

First, we note that an explicit formula for tangential quasiconvexification in the case of the unit sphere $\mS^{n-1}$ has been provided. Indeed, defining 
\begin{equation}
 \bar{V} (\vec z, \vecg\xi) := V \left( \vec z, (\vec I - \vec z \otimes \vec z) \vecg\xi \right),\quad (\vec z, \vecg\xi)\in\mS^{n-1}\times \Rnn, 
\label{prop:tqc}
\end{equation}
and $\bar{V}^{qc}$ to be the quasiconvexification of $\bar{V}$ with respect to the second variable, then $V^{tqc} = \bar{V}^{qc}|_{T^n \mS^{n-1}}$ (see \cite[Example~2.4]{DaFoMaTr99}).

With the following theorem, we obtain (a) the (sequential) weakly lower semicontinuity result in $W^{1,A}$ for the integral functional
\begin{equation*}
J(\vec n):= \int_{\Omega} V (\vn (\vec y), D \vn (\vec y)) \, \dd \vec y,
\end{equation*}
and then (b) the integral representation of the relaxed energy $J^*(\vec n)$.

\begin{thm}\label{th:tqc}
Let $\Omega \subset \R^n$ be open and bounded. Let $V : T^n \mS^{n-1}\to [0, \infty)$ be continuous and satisfy the growth condition
\begin{equation}
 V (\vec z, \vecg\xi) \leq C \left( 1 + A(\|\vecg\xi\|)\right) ,\qquad (\vec z, \vecg\xi) \in T^n \mS^{n-1},
\label{growcon}
\end{equation}
for some $C > 0$.
Let $\vn \in W^{1,A} (\Omega, \mS^{n-1})$.
The following hold:

\begin{enumerate}
\item[\emph{(a)}] 
If $V$ is tangentially quasiconvex then, for any sequence $\{ \vn_j \}_{j \in \N} \subset W^{1,A} (\Omega, \mS^{n-1})$ such that $\vn_j\weakc \vn$ in $W^{1,A} (\Omega, \mS^{n-1})$, we have
\[
 \int_{\Omega} V (\vn (\vec y), D \vn (\vec y)) \, \dd \vec y \leq \liminf_{j \to \infty} \int_{\Omega} V (\vn_j (\vec y), D \vn_j (\vec y)) \, \dd \vec y .
\]

\item[\emph{(b)}] 
\begin{equation}
\begin{split}
&\displaystyle\int_{\Omega} V^{tqc} (\vn (\vec y), D \vn (\vec y)) \, \dd \vec y\\
& =\inf_{\{\vn_j\}} \left\{ \liminf_{j \to \infty} \int_{\Omega} V (\vn_j (\vec y), D \vn_j (\vec y)) \, \dd \vec y :\quad \vn_j \weakc \vn \text{ in } W^{1,A} (\Omega, \mS^{n-1}) \right\}\,. 
\end{split}
\label{stepb}
\end{equation}
\end{enumerate}
\end{thm}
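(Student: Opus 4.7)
\emph{Part (a).} The strategy is to freeze the first argument of $V$ and then apply a classical Orlicz--Sobolev weak lower semicontinuity result for quasiconvex integrands. Using the identity $V^{tqc} = \bar V^{qc}|_{T^n\mS^{n-1}}$ from \eqref{prop:tqc} together with the observation that $D\vn_j(\vec y) \in (T_{\vn_j(\vec y)}\mS^{n-1})^n$ a.e.\ (as $\vn_j$ is sphere-valued, differentiating $|\vn_j|^2 = 1$), I have
\[
V(\vn_j, D\vn_j) \;=\; \bar V^{qc}(\vn_j, D\vn_j) \quad \text{a.e.},
\]
where now $\bar V^{qc}(\vec z,\cdot)$ is \emph{genuinely} quasiconvex on $\Rnn$ and inherits the Orlicz growth $\bar V^{qc}(\vec z,\vecg\xi) \leq C(1 + A(\|\vecg\xi\|))$. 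By the compact embedding $W^{1,A}(\Omega) \hookrightarrow \hookrightarrow L^1(\Omega)$ the limit map $\vn$ admits an a.e.\ pointwise approximation by $\vn_j$ (up to a subsequence), so a Scorza-Dragoni/Lusin step produces a closed set $K_\varepsilon \subset \Omega$ with $|\Omega \setminus K_\varepsilon| < \varepsilon$ on which $\vn$ is continuous, and a finite covering of $K_\varepsilon$ by balls $\{B_\ell\}$ on which $\vn$ oscillates less than $\delta$. Freezing $\vec z_\ell := \vn(\vec x_\ell)$, the equi-integrability of $\{A(\|D\vn_j\|)\}$ provided by Theorem~\ref{lavpous} (as $A\in\Delta_2$) together with an adaptation of Proposition~\ref{pr:Wqccont} (uniform continuity of $\bar V^{qc}$ in its first argument on sublevel sets of the second) absorbs the error in replacing $\bar V^{qc}(\vn_j, D\vn_j)$ by $\bar V^{qc}(\vec z_\ell, D\vn_j)$ on each ball. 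The standard weak lower semicontinuity of quasiconvex integrands with $A$-growth in $W^{1,A}$ (the Orlicz-Sobolev analogue of Acerbi--Fusco under $\Delta_2$) then concludes on each ball, and letting $\delta,\varepsilon \to 0$ yields (a).

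\emph{Part (b), inequality $\geq$.} Apply (a) to $V^{tqc}$, which is itself tangentially quasiconvex and inherits the growth bound. For any admissible sequence $\vn_j \weakc \vn$ in $W^{1,A}(\Omega,\mS^{n-1})$,
\[
\int_\Omega V^{tqc}(\vn, D\vn)\,\dd\vec y \;\leq\; \liminf_{j\to\infty} \int_\Omega V^{tqc}(\vn_j, D\vn_j)\,\dd\vec y \;\leq\; \liminf_{j\to\infty} \int_\Omega V(\vn_j, D\vn_j)\,\dd\vec y,
\]
since $V^{tqc} \leq V$. Infimizing over $\{\vn_j\}$ yields one direction of \eqref{stepb}.

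\emph{Part (b), inequality $\leq$ (the main technical point).} I build a recovery sequence as follows. First reduce to $\vn$ Lipschitz with values in $\mS^{n-1}$ via an Orlicz-Lusin truncation. Then subdivide $\Omega$ into a fine disjoint family of small balls $B_\ell = B(\vec y_\ell, r_\ell)$ on which $\vn \approx \vec z_\ell := \vn(\vec y_\ell)$ and $D\vn \approx \vecg\xi_\ell \in (T_{\vec z_\ell}\mS^{n-1})^n$. Using Definition~\ref{defn:tangq}, on each $B_\ell$ pick an almost-optimal tangent test map $\vecg\varphi_\ell \in W^{1,\infty}(B_1, T_{\vec z_\ell}\mS^{n-1})$ with $\vecg\varphi_\ell(\vec y) = \vecg\xi_\ell \vec y$ on $\partial B_1$ and $\fint_{B_1} V(\vec z_\ell, D\vecg\varphi_\ell) \leq V^{tqc}(\vec z_\ell, \vecg\xi_\ell) + \varepsilon$. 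Extend $\vecg\varphi_\ell - \vecg\xi_\ell\vec y$ periodically, rescale to oscillation scale $r_\ell / j$, and add it to $\vn$ as a tangent-space perturbation at $\vec z_\ell$. Finally project the resulting map onto $\mS^{n-1}$ via the smooth retraction $\pi$ of a tubular neighbourhood of the sphere: for $\delta$ small the values stay within a compact region where $\pi$ is $C^\infty$ and nearly isometric, so the $\Delta_2$-condition on $A$ together with Theorem~\ref{lavpous} control the energy cost of the projection and of the boundary mismatch between adjacent balls. A diagonal extraction in $\varepsilon,\delta$ and the oscillation index $j$ yields a sequence $\vn_j \weakc \vn$ in $W^{1,A}(\Omega,\mS^{n-1})$ with $\limsup_j \int_\Omega V(\vn_j, D\vn_j) \leq \int_\Omega V^{tqc}(\vn, D\vn)$, establishing the reverse inequality.

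The main obstacle is the recovery sequence in (b): the interaction between the sphere constraint (which forces the use of a tubular retraction, well-behaved only for small perturbations of the base point) and the Orlicz growth (which is sensitive to concentrations of $\|D\vecg\varphi_\ell\|$) requires the $\Delta_2$ property and the equi-integrability machinery of Section~\ref{orlicz} to propagate uniform estimates as the oscillation scale $r_\ell/j \to 0$.
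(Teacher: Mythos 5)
Your part (a) follows essentially the same route as the paper, which simply defers to \cite[Proposition~2.5]{DaFoMaTr99} with the $p$-growth lower semicontinuity theorem replaced by its Orlicz analogue \cite[Theorem~3.2]{FocMas}: the identification $V=V^{tqc}=\bar V^{qc}|_{T^n\mS^{n-1}}$, the freezing of the first variable on small balls via equi-integrability of $\{A(\|D\vn_j\|)\}$, and the quasiconvex lower semicontinuity of $\bar V^{qc}(\vec z_\ell,\cdot)$ are exactly the ingredients of that argument. Likewise your ``$\geq$'' direction in (b) (apply (a) to $V^{tqc}$ and use $V^{tqc}\le V$) is the paper's. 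Where you genuinely diverge is the ``$\leq$'' direction of (b): the paper follows \cite[Theorem~3.1]{DaFoMaTr99} and introduces the localized functional $\mathcal F_\infty(\vn,\cdot)$, proves it is the trace of a Radon measure absolutely continuous with respect to $\mathcal L^n$, and then only has to bound its Radon--Nikodym derivative at a single Lebesgue point $\vec x_0$ by $V^{tqc}(\vn(\vec x_0),D\vn(\vec x_0))$, using one almost-optimal tangent test map and the local sphere projection. You instead attempt a single global construction: Lipschitz reduction, a fine covering, periodic rescaled tangent test maps on every ball, and a global projection onto $\mS^{n-1}$. The blow-up route buys you the luxury of working at one point at a time with the affine tangent map as reference; your route, if completed, gives a more explicit recovery sequence but must control all the matching and projection errors simultaneously.

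The one step I would not accept as stated is the opening reduction ``to $\vn$ Lipschitz with values in $\mS^{n-1}$ via an Orlicz--Lusin truncation.'' The standard Lipschitz truncation does not preserve the sphere constraint: on the exceptional set the truncated map is an arbitrary Lipschitz extension into $\R^n$ and may leave any tubular neighbourhood of $\mS^{n-1}$, so one cannot simply compose with the nearest-point retraction. A Lusin-type approximation of $W^{1,A}(\Omega,\mS^{n-1})$ by Lipschitz \emph{sphere-valued} maps is true (Haj\l asz-type arguments for compact targets, which should carry over under the $\Delta_2$-condition), but it is a nontrivial ingredient that you neither prove nor cite, and you would additionally have to verify that both sides of \eqref{stepb} pass to the limit along that approximation (the left-hand side via equi-integrability of $\{A(\|D\vn_k\|)\}$ and the growth \eqref{growcon}, the right-hand side via a diagonal argument, which needs metrizability of the weak topology on bounded sets of the reflexive space $W^{1,A}$). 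This is precisely the technical overhead that the paper's measure-theoretic localization is designed to avoid: at a Lebesgue point no global manifold-valued Lipschitz approximation is needed. If you either supply the sphere-valued Lusin approximation in the Orlicz setting or switch to the blow-up formulation for the derivative bound, the rest of your construction (periodic extension of $\vecg\varphi_\ell-\vecg\xi_\ell\vec y$, smallness of the perturbation in $L^\infty$ at scale $r_\ell/j$, control of $D\pi$ near the base point via $\Delta_2$ and equi-integrability) is sound and matches the local mechanism the paper sketches.
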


\proof
(a) The proof is \emph{verbatim} the same as in \cite[Proposition~2.5]{DaFoMaTr99}. The only difference is that, in the final estimate therein, the classical results of lower semicontinuity for quasi-convex integrands with standard $p$-growth are replaced by the corresponding generalization to the Orlicz setting \cite[Theorem~3.2]{FocMas}.\\
(b) The argument of \cite[Theorem~3.1]{DaFoMaTr99}, concerning with the relaxation of the analogous integral functional in $W^{1,p}$, applies to our case with minor modifications. We then omit the details of the proof, just mentioning the main steps. First, denoting by $\bar{J}(\vn)$ and $\mathcal{F}(\vn)$
the left and right hand sides of \eqref{stepb}, respectively, with (a) we immediately deduce that 
\begin{equation}
\mathcal{F}(\vn)\geq\bar{J}(\vn). 
\end{equation}
To prove the reverse inequality, we need to introduce an auxiliary localized version of $\mathcal{F}(\vn)$; namely,
\begin{equation}
\begin{split}
\mathcal{F}_\infty(\vn, U)= \inf_{\{\vn_j\}} \biggl\{ \liminf_{j \to \infty} &\int_{U} V (\vn_j (\vec y), D \vn_j (\vec y)) \, \dd \vec y : \quad\vn_j \weakc \vn \text{ in } W^{1,A} (\Omega, \mS^{n-1}),\\
 & \vn_j\to\vn \mbox{ uniformly, there exists a compact subset }\\
& \mbox{ $K\subset\mS^{n-1}$ s.t. $\vn_j(\vec y)=\vn(\vec y)$ if $\vec y\not\in K$} \biggr\}
\end{split}
\end{equation}
for every open subset $U\subset\Omega$. Notice that $\mathcal{F}_\infty(\vn, \Omega)\geq \mathcal{F}(\vn)$. Then, we have to show that $\mathcal{F}_\infty(\vn, \cdot)$ is the trace, on the class of open subsets of $\Omega$, of a finite Radon measure, absolutely continuous with respect to the $n$-dimensional Lebesgue measure $\mathcal{L}^n$ on $\R^n$. 

As for the subadditivity property
\begin{equation*}
\mathcal{F}_\infty(\vn, U)\leq \mathcal{F}_\infty(\vn, U') + \mathcal{F}_\infty(\vn, \overline{U}\backslash U''),
\end{equation*}
where $U, U', U''$ are open subsets of $\Omega$ such that $U''\ssubset U'\ssubset U$, the proof is not affected (up to minor modifications) by the Orlicz growth of the gradients.
Moreover, using the growth assumption \eqref{growcon}, we have
$$\mathcal{F}_\infty(\vn, U)\leq C \left( 1 + A(\|D\vn\|)\right)\mathcal{L}^n\res\Omega(U).$$
To conclude, we need the bound
\begin{equation}
\frac{\dd\mathcal{F}_\infty(\vn, \cdot)}{\dd\mathcal{L}^n}(\vec x_0)\leq V^{tqc} (\vn (\vec x_0), D \vn (\vec x_0)), \quad \mbox{for $\mathcal{L}^n$ a.e. $\vec x_0\in\Omega$},
\label{stimata2}
\end{equation}
where $\frac{\dd\mathcal{F}_\infty(\vn, \cdot)}{\dd\mathcal{L}^n}$ is the Radon Nikodym derivative of $\mathcal{F}_\infty(\vn, \cdot)$ with respect to $\mathcal{L}^n$. Indeed, \eqref{stimata2} is equivalent to $\mathcal{F}_\infty(\vn, \Omega)\leq \bar{J}(\vn)$, whence $\mathcal{F}(\vn)\leq \bar{J}(\vn)$.\\
\indent
We briefly sketch the proof of \eqref{stimata2}, referring the interested reader to \cite[pp. 201--206]{DaFoMaTr99} for details. Let ${\vec x_0}\in \Omega$ be a Lebesgue point for $\vn, D\vn$ such that $\frac{\dd\mathcal{F}_\infty(\vn, \Omega)}{\dd\mathcal{L}^n}(\vec x_0)$ exists and is finite, and set $\vec z_0=\vn(\vec x_0)$. Then, with fixed $\eta>0$, from Definition~\ref{defn:tangq}(b) there exists a test function $\vecg\varphi_\eta\in W^{1,\infty} (B_1, T_{\vec z_0}\mS^{n-1})$ such that
\begin{equation}
V^{tqc} (\vec n(\vec x_0),D\vn(\vec x_0))+\eta\geq\fint_{B_1} \! V(\vn(\vec x_0), D\vecg\varphi_\eta(\vec y))\, \dd \vec y.
\label{boundpart}
\end{equation}
Furthermore, a local argument based on the smooth projection of a neighborhood of $\vec z_0$ onto the sphere $\mS^{n-1}$, involving $\vn$, $\vecg\varphi_\eta$ and the continuity properties of $V$, provides a sequence $\vn_j$ converging uniformly to $\vn$ on $\Omega$ and with equiabsolutely integrable $\{D\vn_j\}$. The conclusion then follows by several estimates exploiting \eqref{boundpart} and the properties of $\{\vn_j\}$ and $\{D\vn_j\}$.

\endproof

Since finite-valued quasiconvex functions are continuous (because they are rank-one convex), we infer that any tangentially quasiconvex $V : T^n \mS^{n-1} \to [0, \infty)$ is continuous in the second variable.

\section{Compactness, lower semicontinuity and existence of minimizers for functionals defined in the deformed configuration}\label{complow}

In this section we prove existence of minimizers of $I$ on a suitable set, under the assumptions that $W$ is polyconvex in the first variable and $V$ is tangentially quasiconvex. 

We first introduce the \emph{admissible set} $\mathcal{B}$.
\begin{defn}
Let $\Omega \subset \R^n$ be a bounded Lipschitz domain, representing the reference configuration of the sample.
Let $\Gamma$ be an $(n-1)$-rectifiable subset of $\partial \Omega$, and let $\vec u_0:\Gamma\to \R^{n}$ be a given function.
We define the admissible set $\B$ as the set of pairs $(\vec u,\vn) \in L^1 (\Omega, \R^n) \times L^1 (\R^n, \R^n)$ such that $\vec u\in \A_{{\beta}}(\Omega)$, $\vec u|_{\Gamma}=\vec u_0$ in the sense of traces, $D \vec u (\vec x) \in \Rnn_+$ for a.e.\ $\vec x \in \Omega$,
\[
 \vn|_{\imT(\vec u,\Omega)} \in W^{1,A} (\imT (\vec u, \Omega) ,\mS^{n-1}) \quad \text{and} \quad \vn|_{\R^n \setminus \imT(\vec u,\Omega)} = 0 .
\]
\end{defn}

From the physical point of view, $\vec u$ can be meant to be the elastic deformation of the sample, with a given boundary condition $\vec u_0$, and $\vn$ to be the nematic director field evaluated in the deformed configuration of the sample with respect to $\vec u$.\\

We define the energy functionals
\begin{equation}\label{eq:Idefinitions}
 I, \, I_{\mec}, \, I_{\nem}: L^1 (\Omega, \R^n) \times L^1 (\R^n, \R^n) \to [0,+\infty]
\end{equation}
describing the nematic elastomer as follows:
\[
 I_{\mec} (\vec u, \vn) = \begin{cases}
 \displaystyle \int_{\Omega} W(D\vec u(\vec x),\vn(\vec u(\vec x)))\, \dd \vec x , & \text{if } (\vec u, \vn) \in \B , \\
 +\infty , & \text{otherwise,}
 \end{cases}
\]
\[
 I_{\nem} (\vec u, \vn) = \begin{cases}
 \displaystyle \int_{\imT(\vec u,\Omega)} V(\vn(\vec y), D\vn(\vec y)) \, \dd \vec y  , & \text{if } (\vec u, \vn) \in \B , \\
 +\infty , & \text{otherwise,}
 \end{cases}
\]
and, finally, 
\begin{equation}
\begin{split}
I(\vec u, \vn):&= I_{\mec}(\vec u, \vn) + I_{\nem}(\vec u, \vn)\\
                   &=\displaystyle \int_{\Omega} W(D\vec u(\vec x),\vn(\vec u(\vec x)))\, \dd \vec x +  \displaystyle \int_{\imT(\vec u,\Omega)} V(\vn(\vec y), D\vn(\vec y)) \, \dd \vec y
\end{split}
\end{equation}
if $(\vec u, \vn) \in \B$, $I(\vec u, \vn)=+\infty$ elsewhere.\\

The following result establishes the compactness for sequences bounded in energy and the lower semicontinuity of $I$ in $\B$ with respect to strong $L^1$-topology. The proof can be derived by combining the arguments of \cite[Proposition~4.1]{MCOl}, \cite[Theorem~8.2, Propositions~7.1 and 7.8]{HBMC}, with the necessary adaptation to the Orlicz-Sobolev setting.

\begin{prop}\label{prop:lowersemicont}
Let $W:\R^{n\times n}_{+}\times \mS^{n-1}\to [0,\infty)$ be continuous, polyconvex and such that
\begin{equation}\label{eq:coercWtilde}
 W (\vec F, \vn) \geq c \, A(\|\vec F\|) + h (\det \vec F) , \qquad \vec F \in \Rnn_+ , \quad \vn \in \mS^{n-1}
\end{equation}
for a constant $c>0$ and a Borel function $h : (0, \infty) \to [0, \infty)$ with
\begin{equation}\label{eq:h1}
 \lim_{t \searrow 0} h (t) = \lim_{t \to \infty} \frac{h (t)}{t} = \infty.
\end{equation}
Let $V : T^n \mS^{n-1} \to [0, \infty)$ be continuous and tangentially quasiconvex such that
\begin{equation}\label{eq:Vsgrowth}
 c A(\| \vecg\xi \|) - \frac{1}{c} \leq V (\vec z, \vecg\xi) \leq \frac{1}{c} \left( 1 + A(\| \vecg\xi \|) \right) ,\qquad (\vec z, \vecg\xi) \in T^n \mS^{n-1} .
\end{equation}
Then
\begin{description}
\item[(i)] \emph{(compactness)} for every sequence $\{(\vec u_j, \vn_j)\}_{j\in\N} \subset L^1 (\Omega, \R^n) \times L^1 (\R^n, \R^n)$ such that $\sup_{j \in \N} I(\vec u_j, \vn_j) < \infty$, there exist a subsequence (not relabelled) and $(\vec u, \vn) \in \B$ such that
\begin{equation}\label{eq:ujnjun}
 (\vec u_j, \vn_j) \to (\vec u, \vn) \text{ in } L^1 (\Omega, \R^n) \times L^1 (\R^n, \R^n) \qquad \text{as } j\to\infty;
\end{equation}
\item[(ii)] \emph{(lower semicontinuity)} for every sequence $\{(\vec u_j, \vn_j)\}_{j\in\N}$ such that \eqref{eq:ujnjun} holds, we have
\begin{equation}\label{eq:Isemicont}
 I (\vec u, \vn) \leq \liminf_{j \to \infty} I (\vec u_j, \vn_j) .
\end{equation}
\end{description}
\end{prop}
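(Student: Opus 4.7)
The plan is to combine the coercivity from \eqref{eq:coercWtilde} and \eqref{eq:Vsgrowth}, the polyconvex/tangentially quasiconvex structure of $W$ and $V$, and the compactness, convergence and local invertibility results for admissible Orlicz-Sobolev deformations collected in Section~\ref{sec:fine} (most crucially Proposition~\ref{th:ujINV} and the characterization in Theorem~\ref{thm:StroffoHenao}). The argument would follow the scheme of \cite{HBMC} and \cite{MCOl} adapted to the present Orlicz-Sobolev framework.

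For compactness \textbf{(i)}, I would first extract a weak $W^{1,A}$-limit of $\vec u_j$. The bound $\sup_j I_{\mec}(\vec u_j,\vn_j)<\infty$ together with \eqref{eq:coercWtilde} yields $\sup_j \int_\Omega A(\|D\vec u_j\|)\,\dd\vec x<\infty$ and $\sup_j \int_\Omega h(\det D\vec u_j)\,\dd\vec x<\infty$. By Theorem~\ref{lavpous} the first bound gives equi-absolute continuity of $A(\|D\vec u_j\|)$ in $L^1$; combined with the trace condition $\vec u_j|_\Gamma=\vec u_0$ and the Orlicz Poincaré inequality (Proposition~\ref{poinc:2}), this produces a uniform bound in $W^{1,A}(\Omega,\R^n)$, while \eqref{eq:h1} prevents concentration or degeneration of $\det D\vec u_j$. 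Passing to a subsequence yields $\vec u_j\weakly\vec u$ in $W^{1,A}(\Omega,\R^n)$, $\vec u_j\to\vec u$ a.e.\ and in $L^1$, weak $L^1$-convergence of all minors of $D\vec u_j$ to those of $D\vec u$, and $\det D\vec u>0$ a.e. That $\vec u\in\A_\beta(\Omega)$—namely vanishing cavitation energy, the Jacobian identity $\Det D\vec u=\det D\vec u$, and the cofactor integrability specific to the subclass—would then be deduced from Theorem~\ref{thm:StroffoHenao} and the degree/cofactor stability in Proposition~\ref{th:ujINV}. For the director, \eqref{eq:Vsgrowth} gives $\sup_j \int_{\imT(\vec u_j,\Omega)} A(\|D\vn_j\|)\,\dd\vec y<\infty$; since $\chi_{\imT(\vec u_j,\Omega)}\to\chi_{\imT(\vec u,\Omega)}$ in $L^1(\R^n)$ by Proposition~\ref{th:ujINV}(iv), a further extraction produces $\vn\in L^1(\R^n,\R^n)$ with $\vn|_{\imT(\vec u,\Omega)}\in W^{1,A}(\imT(\vec u,\Omega),\mS^{n-1})$, $\vn\equiv\vec 0$ outside, and $\vn_j\to\vn$ in $L^1(\R^n,\R^n)$, yielding \eqref{eq:ujnjun} with $(\vec u,\vn)\in\B$.

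For the mechanical part of the lower semicontinuity \textbf{(ii)}, write $W(\cdot,\vn)=\Phi(\vec M(\cdot),\vn)$ with $\Phi$ convex in its first argument and continuous in $\vn$ by \eqref{eq:modcontW}. The delicate point is the inner composition $\vn_j(\vec u_j(\cdot))$: using the Lusin $N^{-1}$-type properties behind Proposition~\ref{pr:fine}(iii) for $\hat{\vec u}$, the a.e.\ convergence of $\vec u_j$ to $\vec u$, equi-integrability of $\det D\vec u_j$, and $L^1$-convergence of $\vn_j$ to $\vn$, one obtains $\vn_j\circ\vec u_j\to\vn\circ\vec u$ a.e.\ in $\Omega$ along a subsequence. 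Combined with the weak $L^1$-convergence of the minors of $D\vec u_j$, the standard semicontinuity for polyconvex integrands with continuous parametric dependence then gives $\liminf_j I_{\mec}(\vec u_j,\vn_j)\geq I_{\mec}(\vec u,\vn)$.

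The main obstacle is the nematic term, since its domain $\imT(\vec u_j,\Omega)$ varies with $j$. To overcome this I would invoke Proposition~\ref{th:ujINV}(ii) to produce a countable a.e.\ partition $\{B_k\}\subset \mathcal{U}^{N,\mathrm{in}}_{\vec u}\cap\bigcap_j\mathcal{U}^{N,\mathrm{in}}_{\vec u_j}$ of $\Omega$ with $\vec u_j\to\vec u$ uniformly on each $\partial B_k$. For fixed $k$ and any open $V\ssubset\imT(\vec u,B_k)$, Proposition~\ref{th:ujINV}(i) ensures $V\subset\imT(\vec u_j,B_k)$ for $j$ large; by Proposition~\ref{th:ujINV}(iii) the local inverses $(\vec u_j|_{B_k})^{-1}$ converge weakly-$\ast$ in $BV(V,\R^n)$ to $(\vec u|_{B_k})^{-1}$, with weak convergence of all minors, hence $\vn_j|_V\weakly\vn|_V$ in $W^{1,A}(V,\mS^{n-1})$. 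The tangential quasiconvexity lower semicontinuity Theorem~\ref{th:tqc}(a) then yields $\int_V V(\vn,D\vn)\,\dd\vec y\leq\liminf_j\int_V V(\vn_j,D\vn_j)\,\dd\vec y$. Exhausting each $\imT(\vec u,B_k)$ by an increasing sequence of such $V$'s, then summing over $k$ (using the disjointness of the images $\imT(\vec u,B_k)$ that follows from a.e.\ injectivity of $\vec u$ on each $B_k$) gives the desired lower bound for the nematic energy; together with the mechanical estimate this establishes \eqref{eq:Isemicont}.
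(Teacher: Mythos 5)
Your treatment of compactness and of the mechanical term matches the paper's own (sketched) proof: coercivity plus de la Vall\'ee Poussin gives boundedness of $\{D\vec u_j\}$ in $L^A$ and equiintegrability of $\{\det D\vec u_j\}$, Poincar\'e and the trace condition give the $W^{1,A}$ bound, and the identification of the limit as an element of $\A_\beta(\Omega)$ together with the convergence of $\chi_{\imT(\vec u_j,\Omega)}\vn_j$ and $\chi_{\imT(\vec u_j,\Omega)}D\vn_j$ is delegated to the machinery of \cite{HBMC} (Theorem~8.2 and Proposition~7.1 there), exactly as the paper does; the mechanical lower bound is likewise the content of \cite[Proposition~7.8]{HBMC}.

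Your handling of the nematic term, however, contains a step that would fail. You partition $\Omega$ into the balls $B_k$ of Proposition~\ref{th:ujINV}(ii) and at the end sum the local lower bounds over $k$, ``using the disjointness of the images $\imT(\vec u,B_k)$ that follows from a.e.\ injectivity of $\vec u$ on each $B_k$.'' A.e.\ injectivity of $\vec u$ \emph{on each $B_k$ separately} does not make the sets $\imT(\vec u,B_k)$ pairwise disjoint: maps in $\A_\beta(\Omega)$ are only locally invertible and need not be globally one-to-one, so distinct $B_k$ may have overlapping topological images. If they overlap, $\sum_k\int_{V_k}V(\vn_j,D\vn_j)$ overcounts and is no longer bounded by $\int_{\imT(\vec u_j,\Omega)}V(\vn_j,D\vn_j)$, which breaks your final chain of inequalities. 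A second, smaller issue: the weak convergence $\vn_j\weakc\vn$ in $W^{1,A}(V)$ does not follow from the $BV$-convergence of the local inverses in Proposition~\ref{th:ujINV}(iii); it is already supplied by the compactness step (the convergence $\chi_{\imT(\vec u_j,\Omega)}D\vn_j\weakc\chi_{\imT(\vec u,\Omega)}D\vn$ in $L^A$, restricted to a set on which both indicators eventually equal one). The whole detour through local inverses is unnecessary: the paper's argument simply fixes an arbitrary open $K\ssubset\imT(\vec u,\Omega)$, applies Proposition~\ref{th:ujINV}(i) to $\overline K$ to get $K\subset\imT(\vec u_j,\Omega)$ for $j$ large, invokes Theorem~\ref{th:tqc}(a) on $K$, bounds $\int_K V(\vn_j,D\vn_j)\le\int_{\imT(\vec u_j,\Omega)}V(\vn_j,D\vn_j)$ using $V\ge 0$, and concludes by exhausting $\imT(\vec u,\Omega)$ with such $K$. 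Replacing your decomposition-and-summation step by this exhaustion argument repairs the proof.
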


\begin{proof}[Sketch of the proof]
We can assume, eventually passing to a subsequence, that the ${\lim\inf}$ of the right-hand side of \eqref{eq:Isemicont} is a limit, and that it is finite. In particular, $(\vec u_j, \vec n_j)\subset \mathcal{B}$, so that ${\vec u_j}_{|\Gamma}=\vec u_0$ for every $j$. We first notice that, by assumptions \eqref{eq:coercWtilde}-\eqref{eq:Vsgrowth} and de la Vall\'ee Poussin's criterion (Theorem~\ref{lavpous}), the sequence $\{D\vec u_j\}$ is bounded in $L^A(\Omega,\Rnn)$ and $\{\det D\vec u_j\}$ is equiintegrable. By Poincar\`e inequality and the boundary condition, also $\{\vec u_j\}$ is bounded in $W^{1,A}(\Omega,\R^n)$. Therefore, as showed in the proof of \cite[Theorem~8.2]{HBMC}, there exists $\vec u\in\A_{{\beta}}(\Omega)$ with $\vec u_{|\Gamma}=\vec u_0$ such that $\vec u_j \weakc \vec u \text{ in } W^{1,A} (\Omega, \R^n)$ and $\det D \vec u_j \weakc \det D \vec u \text{ in } L^1 (\Omega)$. 

Now, by the area formula and the fact that $\imT (\vec u_j, \Omega)=\imG (\vec u_j, \Omega)$, the boundedness in $L^1$ of $\{\det D\vec u_j\}$ implies that of $\|\vn_j\|_{L^A(\imT (\vec u_j, \Omega),\R^n)}$. Therefore, \cite[Proposition~7.1]{HBMC} yields the existence of $\vn\in W^{1,A}(\imT (\vec u, \Omega),\mS^{n-1})$ such that $(\vec u, \vec n)\in \mathcal{B}$ and, along a subsequence, 
$\chi_{\imT (\vec u_j, \Omega)} \vn_j \to \chi_{\imT (\vec u, \Omega)}\vn$ in $L^A (\R^n, \R^n)$ and a.e.,  $\chi_{\imT (\vec u_j, \Omega)} D \vn_j \weakc \chi_{\imT (\vec u, \Omega)} D \vn$ in $L^A (\R^n, \Rnn)$ as  $j \to \infty$, where $\chi_{\imT (\vec u, \Omega)} D \vn$ denotes the extension of $D \vn$ by zero outside $\imT (\vec u, \Omega)$, and analogously for $\chi_{\imT (\vec u_j, \Omega)} D \vn_j$. Moreover, by \cite[Proposition~7.8]{HBMC}, we have
\[
\displaystyle \int_{\Omega} W(D\vec u(\vec x),\vn(\vec u(\vec x)))\, \dd \vec x \leq \liminf_{j \to \infty} \displaystyle \int_{\Omega} W(D\vec u_j(\vec x),\vn_j(\vec u_j(\vec x)))\, \dd \vec x.
\]

As for the lower semicontinunity of the nematic term of the energy, we follow the proof of \cite[Proposition~4.1]{MCOl}. Let $K \ssubset \imT (\vec u, \Omega)$ be open. Then, by Proposition~\ref{th:ujINV}(i) applied to the compact set $\bar{K}$, there exists $j_K \in \N$ such that for all $j \geq j_K$ we have $K\subset \imT (\vec u_j, \Omega)$.
Therefore, $\vn_j \weakc \vn$ in $W^{1,A} (K, \R^n)$ as $j \to \infty$ and by Theorem \ref{th:tqc} we get
\[
\begin{split}
 \int_{K} V (\vn (\vec y), D \vn (\vec y)) \, \dd \vec y &\leq \liminf_{j \to \infty} \int_{K} V (\vn_j (\vec y), D \vn_j (\vec y)) \, \dd \vec y\\ &\leq \liminf_{j \to \infty } \int_{\imT (\vec u_j, \Omega)} V (\vn_j (\vec y), D \vn_j (\vec y)) \, \dd \vec y.
\end{split}
\]
The result then follows by the arbitrariness of $K$.
\end{proof}

Once the compactness and lower semicontinuity properties have been proved with Proposition \ref{prop:lowersemicont}, the direct method of the calculus of variations (if necessary, see, e.g.,~\cite{Daco}) yields the following result on the existence of minimizers of $I$ on the admissible set $\mathcal{B}$ (compare with \cite[Thm.~5.1]{HS}).

\begin{thm}\label{th:existence}
Let $W:\R^{n\times n}_{+}\times \mS^{n-1}\to [0,\infty)$ be continuous, polyconvex and such that \eqref{eq:coercWtilde}-\eqref{eq:h1} hold for a constant $c>0$ and a Borel function $h : (0, \infty) \to [0, \infty)$.
Let $V : T^n \mS^{n-1} \to [0, \infty)$ be continuous and tangentially quasiconvex such that \eqref{eq:Vsgrowth} holds.
If $\B\neq \varnothing$ and $I$ is not identically infinity, then $I$ attains its minimum in $\B$.
\end{thm}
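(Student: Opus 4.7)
The plan is to apply the direct method of the calculus of variations, which in this setting reduces almost entirely to invoking Proposition~\ref{prop:lowersemicont}. First, since $\B\neq\varnothing$ and $I$ is not identically $+\infty$ by assumption, the infimum $m:=\inf_{\B} I$ is finite. I would then select a minimizing sequence $\{(\vec u_j,\vn_j)\}\subset\B$ with $I(\vec u_j,\vn_j)\to m$, so that in particular $\sup_j I(\vec u_j,\vn_j)<+\infty$.

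Next, I would invoke the compactness statement Proposition~\ref{prop:lowersemicont}(i): passing to a (not relabelled) subsequence, there exists a candidate $(\vec u,\vn)\in\B$ with
\[
(\vec u_j,\vn_j)\to(\vec u,\vn)\quad\text{in }L^1(\Omega,\R^n)\times L^1(\R^n,\R^n).
\]
The crucial output is precisely that the limit pair sits in $\B$, which packages everything needed to make $I(\vec u,\vn)$ meaningful: $\vec u\in\A_{\beta}(\Omega)$, $\vec u|_{\Gamma}=\vec u_0$, $\det D\vec u>0$ a.e., and $\vn|_{\imT(\vec u,\Omega)}\in W^{1,A}(\imT(\vec u,\Omega),\mS^{n-1})$ with $\vn\equiv 0$ outside.

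Finally, I would apply the lower semicontinuity statement Proposition~\ref{prop:lowersemicont}(ii) along this subsequence to obtain
\[
I(\vec u,\vn)\leq\liminf_{j\to\infty} I(\vec u_j,\vn_j)=m,
\]
and since $(\vec u,\vn)\in\B$ we also have $I(\vec u,\vn)\geq m$, so $(\vec u,\vn)$ attains the minimum. There is no real obstacle at this stage: all the delicate analytic work---the weak closedness of $\A_{\beta}(\Omega)$ under weak $W^{1,A}$ convergence, the convergence of the moving deformed domains $\imT(\vec u_j,\Omega)$ via Proposition~\ref{th:ujINV}, the coercivity built into \eqref{eq:coercWtilde}--\eqref{eq:Vsgrowth}, and the Orlicz tangential quasiconvexity lower semicontinuity from Theorem~\ref{th:tqc}---has already been absorbed into Proposition~\ref{prop:lowersemicont}. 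The hard part of the argument lies entirely upstream in that proposition.
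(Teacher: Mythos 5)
Your argument is correct and is exactly the paper's approach: the theorem is stated as an immediate consequence of Proposition~\ref{prop:lowersemicont} via the direct method, with the minimizing sequence, compactness in $\B$, and lower semicontinuity applied precisely as you describe. Nothing is missing.
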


\section{Construction of a recovery sequence and relaxation}\label{sec:recoveryrelax}

The main aim of this section is the construction of a recovery sequence providing the upper bound inequality that, combined with the compactness and lower semicontinuity results obtained in the previous section, will allow us to obtain the relaxation theorem. 

In order to do that, we list below the coercivity, growth and continuity assumptions on the energy functions $W$ and $V$.\\
\\
{\bf (a) Assumptions on $W$:}
\\
\begin{description}
\item[($W_1$)] $W: \R^{n\times n}_{+} \times \mS^{n-1} \to [0,\infty)$ is continuous;\\
\item[($W_2$)] there exist a convex $h:(0,\infty)\to [0,\infty)$ and constants $c_h,c_W>0$ such that 
\begin{align}
 & h(st) \leq c_h \left( 1 + h(s) \right) \left( 1+h(t) \right) , \qquad s, t > 0 ,\label{condh} \\
 & \lim_{t\to\infty}\frac{h(t)}{t}=\infty , \qquad \liminf_{t\to 0}\frac{h(t)}{{\log^\beta(e+\frac{1}{t})t^{-\frac{(p-1)(n+1)}{n-p+1}}}}>0,\label{2condh}
 \end{align}
 and for all $\vec F \in \Rnn_+$ and $\vec n\in \mS^{n-1}$,
\begin{equation}
 \frac{1}{c_W} \left( A(\| \vec F \|) + h(\det \vec F) \right) -c_W \leq W(\vec F, \vec n) \leq c_W \left(A(\| \vec F \|) + h(\det \vec F) + 1 \right).
\label{condW} 
\end{equation}
\item[($W_3$)] there exists a bounded Borel $\omega:[0,2] \to [0,\infty)$, with $\lim_{t\to0}\omega(t)=0$, such that
\begin{equation}
\left| W(\vec F,\vec n)-W(\vec F,\vec m) \right|\leq \omega \left( \|\vec n-\vec m\| \right) W(\vec F,\vec n)
\end{equation}
for all $\vec F \in \Rnn_+$ and $\vec n,\vec m\in \mS^{n-1}$.
\end{description}

The function $W$ is extended by infinity to $(\Rnn \setminus \R^{n\times n}_{+}) \times \mS^{n-1}$.
Observe that if $(\vec u, \vec n) \in \B$ satisfies $I_{\mec} (\vec u, \vec n) < +\infty$ then $\vec u \in \A_{{\beta}}(\Omega)$.\\
\\
{\bf (b) Assumptions on $V$:}\\
\\
{\bf ($V_1$)} $V : T^n \mS^{n-1} \to [0, \infty)$ is continuous and there exists $c_V>0$ such that
\begin{equation}
\frac{1}{c_V}A(\|\vecg\xi\|)-c_V\leq V(\vec z, \vecg\xi)\leq c_V (A(\|\vecg\xi\|)+1) , \qquad (\vec z, \vecg\xi) \in T^n \mS^{n-1}.
\end{equation}

We define the admissible set $\mathcal{B}$ as in Section~\ref{complow} and the functionals $ I, \, I_{\mec}, \, I_{\nem}$ as in \eqref{eq:Idefinitions}. Correspondingly, we introduce the relaxed functionals $ I^{*}, \, I^{*}_{\nem} , \, I^{*}_{\mec}$, defined on $L^1 (\Omega, \R^n) \times L^1 (\R^n, \R^n)$ as 
\[
 I^{*}_{\mec} (\vec u, \vn) = \begin{cases}
 \displaystyle \int_{\Omega} W^{qc}(D\vec u(\vec x),\vn(\vec u(\vec x)))\, \dd \vec x , & \text{if } (\vec u, \vn) \in \B , \\
 +\infty , & \text{otherwise,}
 \end{cases}
\]
\[
 I^{*}_{\nem} (\vec u, \vn) = \begin{cases}
 \displaystyle \int_{\imT(\vec u,\Omega)} V^{tqc}(\vn(\vec y), D\vn(\vec y)) \, \dd \vec y  , & \text{if } (\vec u, \vn) \in \B , \\
 +\infty , & \text{otherwise,}
 \end{cases}
\]
and $I^*:= I^*_{\mec} + I^*_{\nem}$, where $W^{qc}$ is the quasiconvexification of $W$ with respect to the first variable and $V^{tqc}$ is the tangential quasiconvexification of $V$.\\

The main result of this section is the following theorem.

\begin{thm}\label{recseq}
Let $\Omega \subset \R^{n}$ be open, bounded and with Lipschitz boundary. Assume that $W$ and $V$ comply with $(W_1)-(W_3)$ and $(V_1)$, respectively. Then, for any $(\vec u, \vn) \in L^1 (\Omega, \R^n) \times L^1 (\R^n, \R^n)$ there is a sequence $\{({\vec u}_j, \vn_j)\}_{j\in\N}\subset L^1 (\Omega, \R^n) \times L^1 (\R^n, \R^n)$ such that
\[
 ({\vec u}_j, \vn_j) \to (\vec u, \vn) \text{ in } L^1 (\Omega, \R^n) \times L^1 (\R^n, \R^n) \qquad \text{as } j\to\infty
\]
and
\begin{equation}
 \limsup_{j\to\infty} I ({\vec u}_j, \vn_j) \leq I^* (\vec u, \vn) .
\label{eqn:limsup}
\end{equation}
\end{thm}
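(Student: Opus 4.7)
The plan is to follow the Conti--Dolzmann / Mora Corral--Oliva strategy, adapted to the Orlicz--Sobolev setting provided by Section~\ref{orlicz} and to the subclass $\mathcal{A}_{\beta}(\Omega)$ of Section~\ref{sec:cut}. First, if $(\vec u,\vn)\notin \B$ the right-hand side of \eqref{eqn:limsup} is $+\infty$ and there is nothing to prove, so we reduce to the case $(\vec u,\vn)\in\B$ and $I^*(\vec u,\vn)<+\infty$. In particular, $\vec u\in\A_{\beta}(\Omega)$, $\det D\vec u>0$ a.e., and $\vn|_{\imT(\vec u,\Omega)}\in W^{1,A}(\imT(\vec u,\Omega),\mS^{n-1})$.

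The first main step is the mechanical recovery sequence. Via a standard piecewise-affine (Vitali-type) approximation of $\vec u$ on a fine grid over $\Omega$, one reduces to the local problem of approximating, inside each small ball $B$, an affine map with gradient $\vec F\in\Rnn_+$ by a map whose gradient samples a gradient Young measure realizing $W^{qc}(\vec F,\vn(\vec u))$ in the sense of Definition~\ref{de:quasiconvexification}. This is the content of the construction anticipated by Lemma~\ref{approxubyz} and Proposition~\ref{prop:limsup}: on each such ball one plugs in a test $\vecg\varphi$ almost achieving the infimum defining $W^{qc}$, and composes through Lemma~\ref{lem3.10} so that the glued map $\vec u_j$ stays in $\A_{\beta}(\Omega)$. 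This is where assumption $(W_2)$ on $h$, specifically \eqref{condh}--\eqref{2condh}, and the cofactor integrability built into $\A_{\beta}$ are essential: the lower bound on $h(t)$ as $t\searrow 0$ is tailored precisely so that the change-of-variables condition \eqref{conddet} in Lemma~\ref{lem3.10} is satisfied whenever the energy is finite, letting us stay inside $\A_{\beta}(\Omega)$. Using the Orlicz $\Delta_2$ property and Vall\'ee--Poussin (Theorem~\ref{lavpous}) to handle equi-absolute continuity of $\{A(\|D\vec u_j\|)\}$ and $\{h(\det D\vec u_j)\}$, one obtains $\vec u_j\to\vec u$ in $L^1(\Omega,\R^n)$ and
\[
\limsup_{j\to\infty}\int_{\Omega}W(D\vec u_j,\vn\circ\vec u_j)\,\dd\vec x\leq\int_{\Omega}W^{qc}(D\vec u,\vn\circ\vec u)\,\dd\vec x,
\]
exploiting ($W_3$) to control the dependence on the director by the modulus of continuity $\omega$ and the uniform convergence of $\vn\circ\vec u_j$ on the good subsets provided by Proposition~\ref{th:ujINV}.

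The second main step is the nematic recovery, built \emph{on top of} the mechanical one. Given the mechanical sequence $\vec u_j$, one exploits Proposition~\ref{th:ujINV}(i),(iv): any compact $K\ssubset\imT(\vec u,\Omega)$ lies in $\imT(\vec u_j,\Omega)$ for $j$ large, and $\chi_{\imT(\vec u_j,\Omega)}\to\chi_{\imT(\vec u,\Omega)}$ in $L^1(\R^n)$. On $\imT(\vec u,\Omega)$, Theorem~\ref{th:tqc}(b) yields a sequence $\tilde{\vn}_k\weakc \vn$ in $W^{1,A}(\imT(\vec u,\Omega),\mS^{n-1})$ realizing the relaxation of $\int V$; one truncates $\tilde{\vn}_k$ so that it equals $\vn$ outside a compactly contained set, then extends by zero to $\R^n$ and (via the standard tangential projection onto $\mS^{n-1}$ used in the proof of Theorem~\ref{th:tqc}) transplants it to the slightly different configurations $\imT(\vec u_j,\Omega)$. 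A diagonal extraction then produces $\vn_j\to\vn$ in $L^1(\R^n,\R^n)$ with
\[
\limsup_{j\to\infty}\int_{\imT(\vec u_j,\Omega)}V(\vn_j,D\vn_j)\,\dd\vec y\leq\int_{\imT(\vec u,\Omega)}V^{tqc}(\vn,D\vn)\,\dd\vec y.
\]
Adding the two limsup inequalities gives \eqref{eqn:limsup}.

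The main obstacle I expect is the interaction of three independent technical layers. (i) The Orlicz scale: the standard $p$-growth arguments in \cite{CoDo15,MCOl} rely repeatedly on $L^p$ H\"older duality, on equi-integrability arguments, and on $L^p$ truncation of test fields near the boundary of balls; every one of these has to be revisited in $L^A$ with $A(t)\sim t^{n-1}\log^\alpha(e+t)$, using the inclusions of Theorem~\ref{inclusion}, the dual pair $\mathrm{L}\mathrm{Log}^\beta\mathrm{L}\leftrightarrow \mathrm{Exp}_{1/\beta}$ and Lemma~\ref{lem:inclusions} to move test functions across the regularity scales. (ii) Preserving admissibility: the composition lemma \ref{lem3.10} requires \eqref{conddet}, and so one must carefully design the inner maps $\vecg\rho$ used to glue in the optimizers for $W^{qc}$ so that their Jacobians have just enough integrability; the lower bound on $h$ in \eqref{2condh} is precisely what makes this feasible, and the budget is tight. (iii) The compatibility of the mechanical and nematic approximations: the deformed configuration $\imT(\vec u_j,\Omega)$ changes with $j$, so the nematic recovery must be built to be insensitive to the exact shape of the image, which is exactly what the uniform-on-compacts statement of Proposition~\ref{th:ujINV}(i) permits, combined with a careful diagonal argument in $j$ and $k$.
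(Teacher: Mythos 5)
Your outline of the mechanical step is essentially the paper's: Lemma~\ref{approxubyz} plus the iterated Vitali covering of Lebesgue points in Proposition~\ref{prop:limsup}, with admissibility preserved via Lemma~\ref{lem3.10} and with \eqref{2condh} feeding the integrability condition \eqref{conddet}. One correction of emphasis, though: there is no ``piecewise-affine approximation of $\vec u$'' anywhere. Such an approximation would in general destroy membership in $\A_{\beta}(\Omega)$ and the boundary data. The actual mechanism is an \emph{inner} composition $\vec z=\vec u\circ\tilde{\vecg\rho}$ with $\tilde{\vecg\rho}=\vec F^{-1}\vecg\varphi_\eta(\cdot-\vec a_0)+\vec a_0$, so that $D\vec z\approx D\vecg\varphi_\eta$ near a Lebesgue point where $D\vec u\approx\vec F$; the outer map $\vec u$ is never replaced, only precomposed.

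The genuine problem is in your nematic step and in the ``obstacle (iii)'' you identify. You treat the variation of $\imT(\vec u_j,\Omega)$ with $j$ as a difficulty to be overcome by truncation, extension, and ``transplanting'' $\tilde{\vn}_k$ onto $\imT(\vec u_j,\Omega)$ via a tangential projection. That transplantation is not justified: moving a sphere-valued $W^{1,A}$ map between nearby but different open sets while controlling $\int V(\vn,D\vn)$ on the symmetric difference would require equi-integrability of $A(\|D\vn_j\|)$ up to the boundary of the image, which you do not establish, and a compact-exhaustion argument only bounds the energy on $K$, not on $\imT(\vec u_j,\Omega)\setminus K$ where the limsup inequality could fail. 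The paper's construction eliminates this issue by design: since $\vecg\rho|_{\partial B}=\vec{id}|_{\partial B}$, the modified map agrees with $\vec u$ near the boundary of every ball of the covering, and the degree depends only on boundary values, so $\imT(\vec u_j,\Omega)=\imT(\vec u,\Omega)$ \emph{exactly} for every $j$. The nematic recovery sequence is then taken directly from Theorem~\ref{th:tqc}(b) on the fixed domain $\imT(\vec u,\Omega)$, and the only coupling with the mechanical sequence is the choice of $k_j$ in the Claim of Proposition~\ref{prop:limsup}, handled by Lusin's $N^{-1}$ condition for $\vec u_j$, hypothesis $(W_3)$ and dominated convergence. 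You should replace your transplantation step by this observation; as written, that step is a gap.
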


The proof of this result relies on the following Lemma~\ref{approxubyz}. As showed by Lemma~\ref{lem3.10}, the local composition of the limiting deformation $\vec u$ with a Lipschitz map $\vecg\rho$, satisfying the correct integrability condition, is still an admissible deformation. What we need to know is that the unrelaxed mechanical energy of such composition is near the relaxed mechanical energy of $\vec u$.

We will exploit the following technical result \cite[Lemma 3.1]{CoDo15} dealing with the integrability of the product of suitable translations of $L^1$ functions.
\begin{lem}\label{lem composition L1}
Let $\vec x_0\in\R^n$ and $r>0$.
Let $\vecg\psi\in W^{1,\infty}(B(\vec 0,r),\overline{B(\vec 0,r)})$, $g\in L^{1}(B(\vec 0,r))$ and $f\in L^{1}(B(\vec x_0,2r))$.
Then, there exists a measurable set $E\subset B(\vec x_0,r)$ of positive measure such that for any $\vec a_0\in E$, the function
\[ \tilde{f} (\vec x) :=f(\vec a_0 + \vecg\psi(\vec x - \vec a_0)) \, g(\vec x - \vec a_0) , \qquad \vec x \in B(\vec a_0,r) \]
belongs to $L^{1}(B(\vec a_0,r))$ and 
\[\| \tilde{f}\|_{L^{1}(B(\vec a_0,r))}\le \frac{1}{|B(\vec 0,r)|}\|f\|_{L^{1}(B(\vec x_0,2r))}\|g\|_{L^{1}(B(\vec 0,r))}.\]
\end{lem}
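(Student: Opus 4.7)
The plan is a Fubini/averaging argument. I would first consider the double integral
\[
 J := \int_{B(\vec x_0,r)} \left( \int_{B(\vec a_0,r)} |f(\vec a_0 + \vecg\psi(\vec x - \vec a_0))|\, |g(\vec x - \vec a_0)| \, \dd \vec x \right) \dd \vec a_0,
\]
and show that it is bounded by $\|f\|_{L^1(B(\vec x_0,2r))}\|g\|_{L^1(B(\vec 0,r))}$. The inner integrand is measurable in $(\vec a_0,\vec x)$ since $\vecg\psi$ is Lipschitz (hence Borel), $f,g$ are measurable, and the map $(\vec a_0,\vec x)\mapsto (\vec a_0+\vecg\psi(\vec x-\vec a_0),\vec x-\vec a_0)$ is Borel. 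So Fubini--Tonelli is applicable.

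The crucial step is the change of variables $\vec y := \vec x - \vec a_0$ in the inner integral; after swapping the order of integration via Tonelli this gives
\[
 J = \int_{B(\vec 0,r)} |g(\vec y)| \left(\int_{B(\vec x_0,r)} |f(\vec a_0 + \vecg\psi(\vec y))|\, \dd \vec a_0\right) \dd \vec y.
\]
For each fixed $\vec y\in B(\vec 0,r)$, the map $\vec a_0\mapsto \vec a_0+\vecg\psi(\vec y)$ is a translation, and since $\vecg\psi(\vec y)\in \overline{B(\vec 0,r)}$, its image $\vec a_0+\vecg\psi(\vec y)$ as $\vec a_0$ varies in $B(\vec x_0,r)$ lies entirely inside $B(\vec x_0,2r)$. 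A translation-invariance change of variable then bounds the inner integral by $\|f\|_{L^1(B(\vec x_0,2r))}$, uniformly in $\vec y$. Pulling this constant out yields $J\le \|f\|_{L^1(B(\vec x_0,2r))}\|g\|_{L^1(B(\vec 0,r))}$.

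To extract the pointwise conclusion, set
\[
 \Phi(\vec a_0) := \int_{B(\vec a_0,r)} |f(\vec a_0 + \vecg\psi(\vec x - \vec a_0))|\, |g(\vec x - \vec a_0)| \, \dd \vec x,
\]
a non-negative measurable function on $B(\vec x_0,r)$ with $\int_{B(\vec x_0,r)}\Phi \le \|f\|_{L^1(B(\vec x_0,2r))}\|g\|_{L^1(B(\vec 0,r))}$. Since $|B(\vec x_0,r)|=|B(\vec 0,r)|$, dividing by $|B(\vec x_0,r)|$ gives that the average of $\Phi$ over $B(\vec x_0,r)$ is at most $|B(\vec 0,r)|^{-1}\|f\|_{L^1(B(\vec x_0,2r))}\|g\|_{L^1(B(\vec 0,r))}$. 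Hence the set
\[
 E := \Bigl\{\vec a_0 \in B(\vec x_0,r):\, \Phi(\vec a_0) \le |B(\vec 0,r)|^{-1}\|f\|_{L^1(B(\vec x_0,2r))}\|g\|_{L^1(B(\vec 0,r))}\Bigr\}
\]
must have positive Lebesgue measure (otherwise $\Phi$ would exceed its average a.e.\ on $B(\vec x_0,r)$, contradicting the integral bound). For any $\vec a_0\in E$, finiteness of $\Phi(\vec a_0)$ means exactly that $\tilde f\in L^1(B(\vec a_0,r))$ with the required estimate.

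The only non-routine point to verify is joint measurability of $(\vec a_0,\vec x)\mapsto f(\vec a_0+\vecg\psi(\vec x-\vec a_0))$, but that reduces to the observation that this composition involves the Lipschitz map $\vecg\psi$ and a translation, both of which preserve Borel sets; since $f$ is defined up to a Lebesgue null set, one can take a Borel representative of $f$ to apply Fubini safely. No deep tool is needed; the lemma is essentially the standard probabilistic averaging trick.
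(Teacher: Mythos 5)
Your proof is correct: the Fubini--Tonelli computation, the translation bound $\vec a_0+\vecg\psi(\vec y)\in B(\vec x_0,2r)$, and the averaging/Chebyshev selection of $E$ all go through, and you rightly flag the only delicate point (joint measurability, handled via a Borel representative of $f$). The paper does not prove this lemma but simply quotes it as \cite[Lemma 3.1]{CoDo15}, and your argument is exactly the standard averaging proof given there, so there is nothing further to compare.
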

We are now in position to state and prove the first key result of this section.

\begin{lem}\label{approxubyz} Assume that $W: \R_{+}^{n\times n}\longrightarrow[0,\infty)$ satisfies $(W_1)-(W_3)$ and let $\vec F\in\R_{+}^{n\times n}$, $\vec m\in\mS^{n-1}$ and $\eta\in (0,1)$ be fixed. Then there exists $\delta>0$ such that, for any ball $B(\vec x_0,r)$, any $\vn\in L^{\infty} (\imT(\vec u,B(\vec x_0,r)),\mS^{n-1})$ and any $\vec u\in\mathcal{A}_{{\beta}}(\Omega)$ complying with
\begin{equation}\label{eq: approxledelta}
\fint_{B(\vec x_0,r)} \bigl(A(\|D\vec u - \vec F\|)+|h(\det D\vec u)-h(\det \vec F)|+A(\|\vec n\circ \vec u-\vec m\|)\bigr) \dd \vec x\leq \delta,
\end{equation}
there exist $\vec a_0\in B\left(\vec x_0,\frac{r}{2}\right)$ and $\vec z\in\A_{{\beta}}(B(\vec x_0,r))$ with $\vec z = \vec u$ in $B(\vec x_0,r)\setminus B\left(\vec a_0,\frac{r}{2}\right)$, $\imT (\vec z, \Omega) = \imT (\vec u, \Omega)$,
\begin{equation}\label{eq: lemconv Wqc}
\int_{B\left(\vec a_0,\frac{r}{2}\right)}W(D\vec z, \vn\circ \vec z)\, \dd \vec x\leq \int_{B\left(\vec a_0,\frac{r}{2}\right)}(W^{qc}(D\vec u, \vn\circ \vec u)+C_1\eta)\, \dd \vec x
\end{equation}
and
\begin{equation}\label{eq: lemconvlpbound}
\int_{B(\vec x_0,r)}A(\|\vec u - \vec z\|) \, \dd \vec x \leq C_2\int_{B(\vec x_0,r)}(W^{qc}(D\vec u,\vn\circ \vec u)+1)\, \dd \vec x
\end{equation}
for some positive constants $C_1=C_1(W)$ and $C_2=C_2(r,A,W)$.
\end{lem}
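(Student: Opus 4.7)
I will build $\vec z$ as a composition $\vec u\circ\vecg\rho$ on an inner ball $B(\vec a_0, r/2)$, with $\vecg\rho$ constructed from a near-optimal quasiconvexification test function, following the Conti--Dolzmann/Mora-Corral--Oliva strategy. First, using Definition~\ref{de:quasiconvexification} together with the continuity of $W^{qc}$ on $\R^{n\times n}_+\times\mS^{n-1}$ (Proposition~\ref{pr:Wqccont}), I pick a Lipschitz test function $\vecg\varphi_\eta\in W^{1,\infty}(B_1,\R^n)$ with $\vecg\varphi_\eta(\vec y)=\vec F\vec y$ on $\partial B_1$, $\det D\vecg\varphi_\eta$ bounded above and bounded away from zero a.e., and
\[
\fint_{B_1} W(D\vecg\varphi_\eta,\vec m)\,\dd\vec y\leq W^{qc}(\vec F,\vec m)+\eta/2.
\]
(Two-sided bounds on $\det D\vecg\varphi_\eta$ are arranged by a smoothing step, paying an additional $\eta$-error thanks to the continuity of $W$ and $W^{qc}$.) Setting $\vecg\psi:=\vec F^{-1}\vecg\varphi_\eta$, I have $\vecg\psi|_{\partial B_1}=\id$ and, for each candidate center $\vec a\in B(\vec x_0,r/2)$, the rescaled map
\[
\vecg\rho_{\vec a}(\vec x):=\vec a+\tfrac{r}{2}\vecg\psi\!\left(\tfrac{2(\vec x-\vec a)}{r}\right),\qquad \vec x\in B(\vec a, r/2),
\]
is Lipschitz, sends $B(\vec a, r/2)$ into $\overline{B(\vec a, r/2)}$, is the identity on $\partial B(\vec a, r/2)$, has $\det D\vecg\rho_{\vec a}>0$ a.e., and satisfies $\vec F\,D\vecg\rho_{\vec a}(\vec x)=D\vecg\varphi_\eta(\tfrac{2(\vec x-\vec a)}{r})$. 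Extending by the identity outside $B(\vec a, r/2)$ and defining $\vec z:=\vec u\circ\vecg\rho_{\vec a_0}$, Lemma~\ref{lem3.10} will yield $\vec z\in\A_\beta(B(\vec x_0,r))$: condition~\eqref{conddet} reduces to an integrability requirement on $(\det D\vecg\psi)^{-1}$, which is finite by the smoothing.

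\textbf{Selection of $\vec a_0$ and the key energy estimate.} To identify the correct center I apply Lemma~\ref{lem composition L1} to the $L^1$ function $f(\vec y):=A(\|D\vec u(\vec y)-\vec F\|)+|h(\det D\vec u(\vec y))-h(\det\vec F)|+A(\|\vn(\vec u(\vec y))-\vec m\|)$ and the Lipschitz map $\vec y\mapsto\tfrac{r}{2}\vecg\psi(\tfrac{2\vec y}{r})$. This yields a positive-measure set $E\subset B(\vec x_0,r/2)$ such that for any $\vec a_0\in E$,
\[
\int_{B(\vec a_0,r/2)} f(\vecg\rho_{\vec a_0}(\vec x))\,\dd\vec x \le C\int_{B(\vec x_0,r)} f(\vec y)\,\dd\vec y \le C\delta\,r^n.
\]
By the chain rule $D\vec z(\vec x)=D\vec u(\vecg\rho_{\vec a_0}(\vec x))\,D\vecg\rho_{\vec a_0}(\vec x)$, hence
\[
\|D\vec z(\vec x)-D\vecg\varphi_\eta(\tfrac{2(\vec x-\vec a_0)}{r})\|\le \|D\vecg\rho_{\vec a_0}\|_\infty\,\|D\vec u(\vecg\rho_{\vec a_0}(\vec x))-\vec F\|,
\]
and an analogous control holds for the determinant via (\ref{condh}) and for the director via ($W_3$). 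Combining the uniform continuity of $W$ on compact subsets of $\R^{n\times n}_+\times\mS^{n-1}$ with the selection of $\vec a_0$ and the change of variables $\vec y=\tfrac{2(\vec x-\vec a_0)}{r}$, for $\delta=\delta(\eta)$ sufficiently small I obtain
\[
\int_{B(\vec a_0, r/2)} W(D\vec z, \vn\circ\vec z)\,\dd\vec x \leq \bigl(\tfrac{r}{2}\bigr)^n\!\int_{B_1}\! W(D\vecg\varphi_\eta,\vec m)\,\dd\vec y + C\eta\,|B(\vec a_0, r/2)|.
\]
Invoking the choice of $\vecg\varphi_\eta$ and using continuity of $W^{qc}$ together with (\ref{eq: approxledelta}) to pass from $|B(\vec a_0, r/2)|\,W^{qc}(\vec F,\vec m)$ to $\int_{B(\vec a_0, r/2)}W^{qc}(D\vec u,\vn\circ\vec u)$ gives (\ref{eq: lemconv Wqc}).

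\textbf{Remaining claims and main obstacle.} For $\imT(\vec z,\Omega)=\imT(\vec u,\Omega)$, the key observation is that $\vecg\rho_{\vec a_0}$ maps $B(\vec a_0,r/2)$ into itself with identity boundary values; consequently $\imG(\vec z, U)=\imG(\vec u, U)$ for every good open set $U\in\mathcal U^N_{\vec u}$, and Proposition~\ref{th:char1}(iii) together with Definition~\ref{de:imTomega} yields the equality of topological images. For the $L^A$ bound (\ref{eq: lemconvlpbound}), I write $\vec z(\vec x)-\vec u(\vec x)=\vec u(\vecg\rho_{\vec a_0}(\vec x))-\vec u(\vec x)$ with $|\vecg\rho_{\vec a_0}(\vec x)-\vec x|\leq r$, so that a standard Orlicz-translation estimate combined with the coercivity $A(\|D\vec u\|)\le c_W W(D\vec u,\vn\circ\vec u)+c_W^2$ from ($W_2$) (and the pointwise bound $W^{qc}\le W$) delivers the claim. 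The principal obstacle is the energy estimate above: turning averaged $L^A$-smallness of $\|D\vec u-\vec F\|$ and $\|\vn\circ\vec u-\vec m\|$ into quantitative control of the $W$-difference after composition requires a careful synthesis of Lemma~\ref{lem composition L1} (transfer of $L^1$ bounds through composition), the $\Delta_2$-property of $A$, the modulus $\omega$ of ($W_3$), and the submultiplicative bound (\ref{condh}) for $h$; a closely related subtlety is that preservation of the subclass $\A_\beta$ under composition forces the smoothing of $\vecg\varphi_\eta$ precisely to validate (\ref{conddet}).
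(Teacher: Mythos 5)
Your overall strategy coincides with the paper's (and with the Conti--Dolzmann/Mora-Corral--Oliva scheme it adapts): compose $\vec u$ with $\vecg\rho=\vec F^{-1}\vecg\varphi_\eta$ on an inner ball, select the centre $\vec a_0$ via Lemma~\ref{lem composition L1}, and use \eqref{2condh} to verify \eqref{conddet} so that Lemma~\ref{lem3.10} applies. However, there are two genuine gaps. The first is your claim that ``two-sided bounds on $\det D\vecg\varphi_\eta$ are arranged by a smoothing step, paying an additional $\eta$-error.'' There is no such step available: a near-optimal test function for $W^{qc}(\vec F,\vec m)$ only satisfies $\det D\vecg\varphi_\eta>0$ a.e.\ with $h(\det D\vecg\varphi_\eta)\in L^1$, and mollification neither preserves the affine boundary datum and near-optimality nor produces a uniform lower bound on the determinant (it can even destroy positivity). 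This is precisely the obstruction the actual proof is built around: it introduces the set $U_\gamma=\{\det D\widetilde{\vecg\varphi}_\eta\geq\gamma\}$ and uses the absolute continuity of the integral (estimates \eqref{eq: approxqcdet<gam1}--\eqref{eq: approxqcdet<gam2}) to make the contribution of the complementary ``degenerate'' set of order $\eta$, rather than assuming it away.

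The second gap is that the central estimate \eqref{eq: lemconv Wqc} is asserted, not proved — you yourself flag it as ``the principal obstacle.'' Uniform continuity of $W$ only helps where $D\vec u\circ\rhotilde$ is $\varepsilon$-close to $\vec F$, $\vn\circ\vec u$ is close to $\vec m$, \emph{and} $\det D\widetilde{\vecg\varphi}_\eta\geq\gamma$; the whole content of the lemma is controlling the three exceptional sets $U''_\epsilon$, $U'_\epsilon$ and $B'\setminus U_\gamma$. That requires the decomposition into the four integrals $J_1,\dots,J_4$, Chebyshev's inequality \eqref{cheb} combined with the $\Delta_2$-property of $A$ to bound the measures of $U'_\epsilon$ and $U''_\epsilon$ by $\delta$, and — crucially — applying Lemma~\ref{lem composition L1} with the \emph{weight} $g=1+h(\det D\vecg\rho)$ (which you omit), since on $U''_\epsilon$ one must bound $h(\det D\vec z)=h\bigl((\det D\vec u)\circ\rhotilde\,\det D\rhotilde\bigr)$ via the submultiplicativity \eqref{condh}, and the resulting product $(1+h(\det D\rhotilde))\cdot f\circ\rhotilde$ is exactly what the weighted transfer controls. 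A smaller issue: for \eqref{eq: lemconvlpbound} your ``Orlicz-translation estimate'' is not the right tool (composition with $\vecg\rho_{\vec a_0}$ is not a translation); the correct and simpler route is that $\vec u-\vec z\in W^{1,A}_0(B')$, so Poincar\'e's inequality \eqref{poinc} applies, and one then uses \eqref{condW} together with the already-established \eqref{eq: lemconv Wqc} to bound $\int A(\|D\vec z\|)$ by $\int(W^{qc}(D\vec u,\vn\circ\vec u)+1)$.
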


\proof
The proof is an adaptation to the Orlicz-Sobolev setting of the arguments developed in \cite[Lemma~6.2]{MCOl} and \cite[Lemma~3.2]{CoDo15} for the Sobolev spaces $W^{1,p}$.

The estimate \eqref{eq: lemconvlpbound} follows from Poincar\'{e}'s inequality \eqref{poinc}, the growth condition $(W_2)$, the convexity, monotonicity and $\Delta_2$ property of $A$ \eqref{delta2} with constant $\mu$ and \eqref{eq: lemconv Wqc}. Indeed, we have
\begin{align*}
 & \int_{B(\vec x_0,r)}A(\|\vec u - \vec z\|) \, \dd \vec x = \int_{B (\vec a_0, \frac{r}{2})} A(\|\vec u - \vec z\|) \, \dd \vec x \leq C(r,\mu)\int_{B (\vec a_0, \frac{r}{2})} A(\|D\vec u - D\vec z\|) \, \dd \vec x \\
 & \leq C(r,\mu) \int_{B (\vec a_0, \frac{r}{2})} A(\|D\vec u\| + \|D\vec z\|) \dd \vec x \leq C(r,\mu) \frac{\mu}{2}\int_{B (\vec a_0, \frac{r}{2})} A(\|D\vec u\|) + A(\|D\vec z\|) \dd \vec x\\
&\leq C'(r,\mu,W) \int_{B (\vec a_0, \frac{r}{2})} \left( W^{qc} (D\vec u, \vn \circ \vec u) + W (D\vec z, \vn \circ \vec z) + 1 \right) \dd \vec x \\
 & \leq C_2 \int_{B (\vec a_0, \frac{r}{2})} \left( W^{qc} (D\vec u, \vn \circ \vec u) + 1 \right) \dd \vec x \leq C_2 \int_{B(\vec x_0,r)} \left( W^{qc} (D\vec u, \vn \circ \vec u) + 1 \right) \dd \vec x ,
\end{align*}
where $C_2>0$ is a constant depending on $r,A$ (through $\mu$) and $W$.

We then prove \eqref{eq: lemconv Wqc}. With fixed $\vec m\in \mS^{n-1}$, by Definition \ref{de:quasiconvexification} of the quasiconvexification of $W$, corresponding to $\eta$ there exists $\vecg\varphi_{\eta}\in W^{1,\infty} (B_{\frac{r}{2}}, \R^n )$ such that $\vecg\varphi_{\eta}(\vec x)=\vec F \vec x$ on $\partial B_{\frac{r}{2}}$, $\det D\vecg\varphi_{\eta}>0$ a.e.\ and
\begin{equation}\label{eq: approxqcvp}
\fint_{B_{\frac{r}{2}}} W(D\vecg\varphi_{\eta}(\vec x),\vec m)\, \dd x\leq W^{qc}(\vec F,\vec m)+\eta.
\end{equation}

The function $\vecg\rho:=\vec F^{-1}\vecg\varphi_\eta$ is Lipschitz and $\vecg\rho=\vec {id}$ on $\partial B_{\frac{r}{2}}$, so that by degree theory $\vecg\rho(B_{\frac{r}{2}})\subset \overline{B_{\frac{r}{2}}}$.
 Moreover, it is well known (see, e.g. \cite[Th.\ 8]{Sverak88}) that $\vecg\rho$ is invertible and that $\vecg\rho^{-1}\in W^{1,1}(B_{\frac{r}{2}})$. For the sake of brevity, we set $B:=B(\vec x_0,r)$.

For $\vec a_0\in B (\vec x_0,\frac{r}{2} )$ that will be suitably chosen later, we consider $B':=B (\vec a_0,\frac{r}{2})$, set $\rhotilde(\vec x)=\vecg\rho(\vec x-\vec a_0)+\vec a_0$ and 
\[\vec z(\vec x):=\begin{cases} \vec u \circ \tilde{\vecg\rho}(\vec x)= \vec u(\vec F^{-1}\vecg\varphi_\eta(\vec x-\vec a_0)+\vec a_0) &\text{in } B' , \\
\vec u(\vec x)&\text{in } B\setminus B' .
\end{cases}\]
We then have $\vec z = \vec u$ in $B\setminus B'$, $\imT(\tilde{\vecg\rho},B')=B'$ and $\tilde{\vecg\rho}^{-1}\in W^{1,1}(B',\R^{n})$. 

Since $\vec u\in W^{1,1}_{\mbox{loc}}(\Omega,\R^n)$, by \cite[Lemma~5.2]{MCOl} and \cite[Lemma A.2]{CoDo15}, there exists a $\mathcal{L}^n$-null set $N$ such that for all $\vec a_0\in B(\vec x_0, \frac{r}{2}) \setminus N$ we have that $\vec z\in W^{1,1}(B',\R^{n})$, $\det D\vec z\in L^{1}(B)$. Moreover, since $\tilde{\vecg\rho}|_{\partial B'}=\vec {id}|_{\partial B'}$  we have $\vec u\circ \tilde{\vecg\rho}|_{\partial B'}=\vec u|_{\partial B'}$ and then $\vec z\in W^{1,1}(B,\R^{n})$. Choose $E$ and $\vec a_0\in E\setminus N$ using Lemma~\ref{lem composition L1} applied to $B'$ with $\vecg\psi=\vec F^{-1}\vecg\varphi_\eta$, $f=A(\|D\vec u - \vec F\|)+|h(\det D\vec u)-h(\det \vec F)|$ and $g=1+h(\det(\vec F^{-1}D\vecg\varphi_\eta))$. Then, by \eqref{eq: approxledelta}, we obtain
\begin{equation}\label{eq: approxqcledelta}
\begin{split}
 &\fint_{B'}(1+h(\det D\rhotilde))\bigl(A(\|D\vec u - \vec F\|)+|h(\det D\vec u)-h(\det \vec F)|\bigr)\circ \rhotilde\, \dd \vec x\\
&\leq \frac{|B(\vec x_0,r)|}{|B'|}\fint_{B(\vec x_0,r)}\bigl(A(\|D\vec u - \vec F\|)+|h(\det D\vec u)-h(\det \vec F)|\bigr)\, \dd \vec x\fint_{B_{\frac{r}{2}}}(1+h(\det D\vecg\rho))\, \dd \vec x\\
&\le 2^n\delta\fint_{B_{\frac{r}{2}}}(1+h(\det D\vecg\rho))\, \dd \vec x=:c_\eta\delta,
\end{split}
\end{equation}
with $c_\eta$ depending on $\eta$ and $\vec F$.

With \eqref{eq: approxqcvp}, \eqref{condW} implies $h(\det D\vecg\varphi_\eta)\in L^{1} (B_{\frac{r}{2}})$ whence, in view of \eqref{2condh}, we also obtain ${\log^\beta\left(e+\frac{1}{\det D\vecg\varphi_\eta}\right)(\det D\vecg\varphi_\eta)^{-\frac{(p-1)(n+1)}{n-p+1}}\in L^{1} (B_{\frac{r}{2}})}$. This ensures that $\rhotilde$ complies with the integrability assumption \eqref{conddet} of Lemma~\ref{lem3.10} on $B'$. Furthermore, from \eqref{condh} we deduce that $h(\det D\vecg\rho)\in L^{1} (B_{\frac{r}{2}})$. 
Therefore, since $\det D\vecg\varphi_\eta>0$ a.e., from the absolute continuity of the integral there exists a constant $\gamma=\gamma(\vec F, \vec m, \eta)>0$ such that  
\begin{equation}\label{eq: approxqcdet<gam1}
\int_{B_{\frac{r}{2}}\cap \{\det D\vecg\varphi_\eta<\gamma\}}\bigl(1+h(\det D\vecg\rho)\bigr)\, \dd \vec x\leq \frac{|B_{\frac{r}{2}}|\eta}{\left(3+\frac{\mu}{2}c_{\eta,\mu}\right)\left(A(1)+A(\|\vec F\|)+h(\det \vec F)\right)},
\end{equation}
where $c_{\eta,\mu}$ is the smallest constant such that $A(\|D\vecg\rho\|_{L^\infty}t)\leq c_{\eta,\mu}A(t)$, and
\begin{equation}\label{eq: approxqcdet<gam2}
\int_{B_{\frac{r}{2}}\cap \{\det D\vecg\varphi_\eta<\gamma\}}\left(1+A(\|D\vecg\varphi_\eta\|)+h(\det D\vecg\varphi_\eta)\right)\, \dd \vec x\le \frac{1}{c_W}|B_{\frac{r}{2}}|\eta,
\end{equation}
where $c_W$ is the constant of \eqref{condW}.

Let  $R_\eta=\|D\rhotilde\|_{L^{\infty}}$ and $M_\eta=\|D\vecg\varphi_\eta\|_{L^{\infty}}$.
 Since by assumption $(W_1)$, $W$ is continuous in $\R^{n\times n}_{+}\times\mS^{n-1}$, there exists $\varepsilon>0$ not depending on $\vec u$, $\vn$ or $\delta$ with $\varepsilon R_\eta \leq 1$ and $\varepsilon \leq 1$ such that
\begin{equation}\label{eq: approxconvqcWcont}
|W(\vecg\sigma,\vecg{\ell})-W(\vecg\zeta,\vec{k})|\leq \eta
\end{equation}
for all $(\vecg\sigma,\vecg \ell)$, $(\vecg\zeta, \vec{k})\in \R^{n\times n}_{+}\times \mS^{n-1}$ complying with $\|\vecg\zeta\|\le M_\eta$, $\det \vecg\zeta\geq\gamma$ and $\|\vecg\sigma-\vecg\zeta\|+\|\vecg{\ell}-\vec{k}\|\leq R_{\eta}$.
In addition, by Proposition \ref{pr:Wqccont} and the continuity of $h$, the number $\varepsilon$ can be chosen in order to obtain
\begin{equation}\label{eq: approxconvqcWqccont}
|W^{qc}(\vecg\zeta,\vecg{\ell})-W^{qc}(\vec F,\vec m)|+|h(\det \vecg\zeta)-h(\det \vec F)|\leq \eta
\end{equation}
for all $\vecg\zeta \in \Rnn_+$ and $\vecg{\ell} \in \mS^{n-1}$ satisfying $\|\vecg \zeta - \vec F\|+\|\vecg{\ell}-\vec m\|\leq \varepsilon$.

We set $\widetilde{\vecg\varphi}_\eta(\vec x)=\vecg\varphi_\eta(\vec x - \vec a_0)$, and write
\[\int_{B'}\left(W(D\vec z,\vn\circ \vec z)-W^{qc}(D\vec u,\vn\circ \vec u)\right) \dd \vec x \] 
as the sum of the four integrals 
\begin{align*}
 J_1&=\int_{B'}\left(W(D\vec z,\vn\circ \vec z)-W(D\hvpe,\vn\circ \vec z)\right) \dd \vec x,\\
J_2&=\int_{B'}\left(W(D\hvpe,\vn\circ \vec z)-W(D\hvpe,\vec m)\right) \dd \vec x, \\
 J_3&=\int_{B'}\left(W(D\hvpe,\vec m)-W^{qc}(\vec F,\vec m)\right) \dd \vec x, \\
J_4&=\int_{B'}\left(W^{qc}(\vec F,\vec m)-W^{qc}(D\vec u,\vn\circ \vec u)\right) \dd \vec x,
\end{align*}
which will be estimated separately. From \eqref{eq: approxqcvp} we immediately deduce that 
\begin{equation}
J_3=\int_{B'}\left(W(D\hvpe(\vec x),\vec m)-W^{qc}(\vec F,\vec m)\right) \dd \vec x \leq \eta|B'|.
\label{stima3}
\end{equation}
To estimate $J_4$, we first define the set
\begin{equation*}
S_\epsilon:=\left\{\vec x\in B':\, \|D\vec u(\vec x)-\vec F\|+\|\vec m-\vn\circ \vec u(\vec x)\|\leq \varepsilon\right\}
\end{equation*}
and then we use \eqref{eq: approxconvqcWqccont} to get 
\begin{equation}
W^{qc}(\vec F,\vec m)\leq W^{qc}(D\vec u(\vec x),\vn\circ \vec u(\vec x))+\eta\quad\text{ for every $\vec x\in S_\epsilon$.}
\label{stimSeps}
\end{equation}
In $B'\backslash S_\epsilon$, instead, we use the Chebyshev's inequality \eqref{cheb}, the $\Delta_2$-condition \eqref{delta2} and \eqref{eq: approxledelta} to obtain, with \eqref{stimSeps},
\begin{align*}
J_4&\leq \eta \, | B'| + W^{qc}(\vec F,\vec m) |B'\backslash S_\epsilon|\\
&\le \eta \, |B'| + W^{qc}(\vec F,\vec m)\frac{\mu}{2A(\epsilon)}\int_{B'} \left(A(\|D\vec u(\vec x) - \vec F\|)+A(\|\vec n\circ \vec u(\vec x)-\vec m\|)\right) \dd \vec x\\
&\le \eta \, |B'| + W^{qc}(\vec F,\vec m)\frac{\mu}{2A(\epsilon)}|B|\delta.
\end{align*}
Thus,
\begin{equation}
J_4=\int_{B'}\left(W^{qc}(\vec F,\vec m)-W^{qc}(D\vec u(\vec x),\vn\circ \vec u(\vec x))\right) \dd \vec x \le 
(\eta+c_4\delta)|B'|,
\label{stima4}
\end{equation}
where $c_4=c_4(n,A,W,\vec F, \vn):=W^{qc}(\vec F,\vec m)\frac{2^{n-1}\mu}{A(\epsilon)}$. 

For what concerns the estimate of $J_2$, we consider the sets 
\[U'_\epsilon=\{\vec x\in B': \|\vn\circ \vec u(\vec x)-\vec m\| \geq \varepsilon R_\eta \} \quad \text{and} \quad U_\gamma=\{\vec x\in B': \det D\hvpe (\vec x)\geq \gamma\},\]
where $\varepsilon$ and $\gamma$ are the same as of \eqref{eq: approxconvqcWcont}.
With the change of variables $\vec x=\rhotilde^{-1}(\vec x')$, we can rewrite $J_2$ as
\begin{align*}
J_2=\int_{B'}\left(W((D\hvpe)\circ \rhotilde^{-1}(\vec x'),\vn\circ \vec u(\vec x'))-W((D\hvpe)\circ \rhotilde^{-1}(\vec x'),\vec m)\right)\det D \rhotilde^{-1}(\vec x')\, \dd \vec x',
\end{align*}
and it holds that
\begin{equation}
\int_{B'}\det D\rhotilde^{-1}(\vec x')\, \dd \vec x'= |B'|.
\label{intdet}
\end{equation}
We use \eqref{eq: approxconvqcWcont} and \eqref{intdet} on the set $\rhotilde(U_\gamma)\setminus U'_\epsilon$, and \eqref{condW} on $B'\setminus  (\rhotilde(U_\gamma)\setminus U'_\epsilon)$ to get
\begin{align*}
J_2&\le \int_{\vec \rhotilde(U_\gamma)\setminus U'_\epsilon}\eta \det D\rhotilde^{-1}(\vec x')\, \dd \vec x'+\int_{B'\setminus  (\rhotilde(U_\gamma)\setminus U'_\epsilon)}W((D\hvpe)\circ \rhotilde^{-1}(\vec x'),\vn\circ \vec u(\vec x'))\det D\rhotilde^{-1}(\vec x')\, \dd \vec x'\\
&\leq \eta \, |B'|+c_W\int_{B'\setminus (\rhotilde(U_\gamma)\setminus U'_\epsilon)}\left(1+ A(\|(D\hvpe)\circ \rhotilde^{-1}(\vec x')\|)+h(\det D\hvpe)\circ \rhotilde^{-1}(\vec x')\right)\det D\rhotilde^{-1}(\vec x')\, \dd \vec x'.
\end{align*}
Now, the change of variables $\vec x = \rhotilde^{-1}(\vec x')$ and \eqref{eq: approxqcdet<gam2} give
\begin{align*}
&c_W\int_{B'\setminus \rhotilde(U_\gamma)}\left(1+ A(\|(D\hvpe)\circ \rhotilde^{-1}(\vec x')\|)+h(\det D\hvpe)\circ \rhotilde^{-1}(\vec x')\right)\det D\rhotilde^{-1}(\vec x')\, \dd \vec x'\\
&= c_W\int_{B'\setminus U_\gamma}\left(1+ A(\|D\hvpe(\vec x)\|)+h(\det D\hvpe(\vec x))\right) \dd \vec x\leq \eta \, |B'|.
\end{align*}
 On the other hand, since for every $\vec x\in U_\gamma$ it holds that $\det D\rhotilde(\vec x)\geq \gamma\det \vec F^{-1}$, we infer that $\det D\rhotilde^{-1}\in L^{\infty}(\rhotilde(U_\gamma))$. Then, as a consequence of \eqref{eq: approxledelta}, $h(\det D\hvpe)\in L^{\infty}(U_\gamma)$ and the Chebyshev's inequality we get
\begin{align*}
&c_W\int_{U'_\epsilon\cap \rhotilde(U_\gamma)}\left(1+ A(\|(D\hvpe)\circ \rhotilde^{-1}(\vec x')\|)+h(\det (D\hvpe))\circ \rhotilde^{-1}(\vec x')\right)\det D \rhotilde^{-1}(\vec x')\, \dd \vec x'\\
&\leq \tilde{c} |U'_\epsilon|\leq \tilde{c}\frac{1}{A(\varepsilon)R_\eta} \, \delta \, |B'|,
\end{align*} 
with the constant $\tilde{c}$ depends only on $W$, $\gamma$ and $\eta$.

Therefore, we conclude that there exists a constant $\tilde{c}$ depending on $\eta$ and $W$ but not on $\delta$ such that
\begin{equation}
\begin{split}
J_2=\int_{B'}\left(W(D\hvpe,\vn\circ \vec z)-W(D\hvpe,\vec m)\right) \dd \vec x&\le \left(2\eta+\tilde{c}\frac{1}{A(\varepsilon)R_\eta}\delta\right) \, |B'|\\
&=: (2\eta + c_2\delta)|B'|,
\end{split}
\label{stima2}
\end{equation}
where $c_2=c_2(\eta,A,W)$. 

Now, we are left to estimate the integral $J_1$. For this, we introduce the set
\[U''_\epsilon=\{\vec x\in B': \|D\vec u(\vec x)-\vec F\|\circ \rhotilde\geq \varepsilon \}.\]
We note that, for every $\vec x\in B'$,
\[D\vec z(\vec x)=(D\vec u\circ \rhotilde(\vec x))D\rhotilde(\vec x)=\left[(D\vec u-\vec F)\circ \rhotilde(\vec x)\right]D\rhotilde(\vec x)+D\hvpe(\vec x)\] 
and that in $U_\gamma\setminus U''_\epsilon$ we have $\det D\hvpe\geq \gamma$ and $ \|D\vec u(\vec x)-\vec F\|\circ \rhotilde \leq \varepsilon$. We then get
\[\|D\vec z-D\hvpe\|\leq \left[\|D\vec u-\vec F\|\circ \rhotilde\right]\|D\rhotilde\|\leq \varepsilon R_{\eta} ,\]
whence, combined with \eqref{eq: approxconvqcWcont}, we infer that
\[\int_{U_\gamma\setminus U''_\epsilon}\left(W(D\vec z,\vn\circ \vec z)-W(D\hvpe,\vn\circ \vec z)\right) \dd \vec x\leq \eta|B'|.\]
Using the growth estimate \eqref{condW} we obtain
\[W(D\vec z,\vn\circ \vec z)\leq c_W\left(1+A([\|D\vec u\|\circ \rhotilde]\|D\rhotilde\|)+h((\det D\vec u)\circ \rhotilde\det D\rhotilde)\right).\]
Now, with $\|D\rhotilde\|\leq R_\eta$, the monotonicity of $A$ and \eqref{condh} we get that, in $B'$,
\begin{equation}\label{eq: approxboundWdz}
W(D\vec z,\vn\circ \vec z)\leq c_W\bigl[1+A(R_\eta \|D\vec u\|)\circ \rhotilde+(1+h(\det D\vec u)\circ \rhotilde)\bigr](1+h(\det D\rhotilde)).
\end{equation}
To estimate $J_1$ in $U''_\epsilon$, we note that from $\|D\vec u-\vec F\|\circ \rhotilde\geq \varepsilon $, the triangle inequality and the properties of $A$ we deduce that
\begin{equation}
\begin{split}
A(R_\eta\|D\vec u\|)\circ \rhotilde+2&\leq \frac{\mu}{2}\left(A(R_\eta\|D\vec u-\vec F\|)\circ \rhotilde+A(R_\eta\|\vec F\|)+\frac{4}{\mu}\right)\\
&= \frac{\mu}{2}\left(\frac{\mu A(R_\eta\|\vec F\|)+4}{\mu A(R_\eta\|D\vec u-\vec F\|)}+1\right)A(R_\eta\|D\vec u-\vec F\|)\circ \rhotilde\\
&\leq\frac{\mu}{2}\left(\frac{\mu A(R_\eta\|\vec F\|)+4}{\mu A(R_\eta\varepsilon)}+1\right)A(R_\eta\|D\vec u-\vec F\|)\circ\rhotilde\\
&\leq \frac{\mu}{2}c_{\eta,\mu}\left(\frac{\mu c_{\eta,\mu}A(\|\vec F\|)+4}{\mu A(R_\eta\varepsilon)}+1\right)A(\|D\vec u-\vec F\|)\circ\rhotilde
\end{split}
\label{stim1}
\end{equation}
and
\begin{equation}
h(\det D\vec u)\circ \rhotilde\leq |h(\det D\vec u)\circ \rhotilde-h(\det \vec F)|+\frac{h(\det \vec F)}{A(\varepsilon)} \, A(\|D\vec u-\vec F\|)\circ \rhotilde.
\label{stim2}
\end{equation}
Therefore, setting
\begin{equation*}
c':=c_W\cdot \max\left \{1\, , \frac{\mu}{2}c_{\eta,\mu}\left(\frac{\mu c_{\eta,\mu}A(\|\vec F\|)+4}{\mu A(R_\eta\varepsilon)}+1\right)+\frac{h(\det \vec F)}{A(\varepsilon)}\right\},
\end{equation*}
from \eqref{eq: approxboundWdz}, \eqref{stim1}-\eqref{stim2} and \eqref{eq: approxqcledelta} we obtain
\begin{align*}
\int_{U'}W(D\vec z,\vn\circ \vec z)&\leq c_W\int_{U'}(1+h(\det D\rhotilde(\vec x)))\left(2+ A(R_\eta\|D\vec u\|)+h(\det D\vec u)\right)\circ \rhotilde(\vec x)\, \dd \vec x\\
&\leq c'\int_{U'}(1+h(\det D\rhotilde(\vec x)))\left(A(\|D\vec u-\vec F\|)+|h(\det D\vec u)-h(\det \vec F)|\right)\circ \rhotilde(\vec x)\, \dd \vec x\\
&\leq c_\eta' \, \delta \, |B'|,
\end{align*}
where $c_\eta':=c'c_\eta$ depends on $W$, $\eta$, $A$ and $\vec F$ but not on $\delta$.

 In $B'\setminus (U_\gamma\cup U''_\epsilon)$ we have $\|D\vec u-\vec F\|\circ \rhotilde\leq \varepsilon  \leq 1$ and $\det D\hvpe < \gamma$.
  Then we deduce that $\|D\vec u\|\circ \rhotilde\leq \|\vec F\|+1$ and, by virtue of the continuity estimate \eqref{eq: approxconvqcWqccont}, we also obtain $h(\det D\vec u)\circ \rhotilde\leq h(\det \vec F)+1$.
  Now, from \eqref{eq: approxboundWdz} again, we infer that
\begin{align*}
W(D\vec z,\vn\circ \vec z)&\le c\left(3+A(R_\eta(1+\|\vec F\|))+h(\det \vec F)\right)(1+h(\det D\rhotilde))\\
& \leq c\left(3+\frac{\mu}{2}c_{\eta,\mu}\right)\left(A(1)+A(\|\vec F\|)+h(\det \vec F)\right)(1+h (\det D\rhotilde)),
\end{align*}
with $c$ depending only on $W$. Finally, by virtue of \eqref{eq: approxqcdet<gam1} we get
\[\int_{B'\setminus (U''_\epsilon\cup U_\gamma)}W(D\vec z,\vn\circ \vec z)\, \dd \vec x\leq c \, \eta \, |B'|,\]
and, consequently,
\begin{equation}
J_1=\int_{B'}\left(W(D\vec z,\vn\circ \vec z)-W(D\hvpe,\vn\circ \vec z)\right) \dd \vec x\leq (\eta+c_\eta'\delta+c\eta) \, |B'|.
\label{stima1}
\end{equation}

Adding term by term the estimates \eqref{stima1}, \eqref{stima2}, \eqref{stima3} and \eqref{stima4} we find
\begin{align*}
\int_{B'}&\left(W(D\vec z,\vn\circ \vec z)-W^{qc}(D\vec u,\vn\circ \vec u)\right) \dd \vec x\\
&\leq [(5+c)\eta + (c_2+c_4+c'_\eta)\delta]|B'|.
\end{align*}
Since all the constants in the estimate above 
do not depend on $\delta$, we may choose any $\delta$ complying with
\begin{equation*}
\delta<\frac{\eta}{c_2+c_4+c'_\eta}
\end{equation*}
to deduce \eqref{eq: lemconv Wqc} from \eqref{eq: approxledelta}.

With the growth condition \eqref{condW}, we find that $D\vec z\in L^{A}(B)$ and, consequently, $\vec z\in W^{1,A}(B)$. Furthermore, since $\vec a_0$ was chosen so that $\det D\vec z\in L^{1}(B)$, Lemma~\ref{lem3.10} gives $\vec z\in \A_{{\beta}}(B)$ and the proof is concluded.

\endproof

With the next proposition, we obtain the limsup inequality \eqref{eqn:limsup}. The strategy of the proof, based on the argument of \cite[Lemma~6.3]{MCOl} (see also \cite[Lemma~3.3]{CoDo15} for the case with only the mechanical term), is to apply Lemma~\ref{approxubyz} around each Lebesgue point of $D\vec u$ and $\vn\circ\vec u$. We exhibit the construction of a mutual recovery sequence $\{(\vec u_j, \vn_j)\}$ providing a limsup inequality for both the mechanical term \eqref{dis1} and the nematic term \eqref{dis2} separately. The desired inequality \eqref{eqn:limsup} then will follow immediately from the subadditivity of the limsup.

\begin{prop}\label{prop:limsup}
Let $\Omega\subset\R^{n}$ be open, bounded and with Lipschitz boundary, and assume that $W$ satisfies $(W_1)$-$(W_3)$. Then for any $\vec u\in\A_{{\beta}}(\Omega)$ and any $\vn \in W^{1,A}(\imT(\vec u,\Omega),\mS^{n-1})$, there exist two sequences $\vec u_j\in\A_{\textcolor{red}{\beta}}(\Omega)$ and $\vn_j \in W^{1,A}(\imT(\vec u,\Omega),\mS^{n-1})$ such that $\vec u_j\rightharpoonup \vec u$ in $W^{1,A} (\Omega, \R^n)$, $\vec u_j = \vec u$ on $\partial \Omega$, $\imT(\vec u_j,\Omega) = \imT(\vec u,\Omega)$ for all $j \in \N$, $\vn_j \weakc \vn$ in $W^{1,A} (\imT(\vec u,\Omega), \mS^{n-1})$,
\begin{equation}\limsup_{j\to\infty}\int_{\Omega}W(D\vec u_{j},\vn_j\circ \vec u_j)\, \dd \vec x\leq \int_{\Omega}W^{qc}(D\vec u,\vn \circ \vec u)\, \dd \vec x,
\label{dis1}
\end{equation}
and 
\begin{equation}
\limsup_{j\to\infty}\int_{\imT(\vec u_j,\Omega)}V(\vn_j(\vec y), D\vn_j(\vec y)) \, \dd \vec y\leq \int_{\imT(\vec u,\Omega)} V^{tqc}(\vn(\vec y),D\vn(\vec y)) \, \dd \vec y. 
\label{dis2}
\end{equation}
\end{prop}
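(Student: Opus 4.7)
The plan is to construct the mechanical recovery sequence $\{\vec u_j\}$ and the nematic recovery sequence $\{\vn_j\}$ essentially independently, using Lemma~\ref{approxubyz} for the former and Theorem~\ref{th:tqc}(b) for the latter, and then to couple them via the continuity condition $(W_3)$. I may assume $I^*(\vec u,\vn)<\infty$, as otherwise \eqref{eqn:limsup} is trivial. The polyconvex lower bound appearing in \eqref{condW} is itself quasiconvex, hence inherited by $W^{qc}$, and so the finiteness of $I^*_{\mec}$ forces $A(\|D\vec u\|),\,h(\det D\vec u)\in L^1(\Omega)$; combined with the upper bound in \eqref{condW} and with $(V_1)$, this gives $W(D\vec u,\vn\circ\vec u)\in L^1(\Omega)$ and $V(\vn,D\vn)\in L^1(\imT(\vec u,\Omega))$.

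First I would build $\vec u_j$. Fix $\eta>0$ and let $\delta=\delta(\eta)$ be the constant of Lemma~\ref{approxubyz}. At $\mathcal{L}^n$-a.e.\ $\vec x_0\in\Omega$, which is a Lebesgue point of $D\vec u$, $\vn\circ\vec u$, $h(\det D\vec u)$ and $W^{qc}(D\vec u,\vn\circ\vec u)$, the average in \eqref{eq: approxledelta} with $\vec F=D\vec u(\vec x_0)$, $\vec m=\vn\circ\vec u(\vec x_0)$ is smaller than $\delta$ for balls of sufficiently small radius. A Vitali selection produces a countable disjoint family $\{B(\vec x_i,r_i)\}\ssubset\Omega$ of such balls covering $\Omega$ up to a null set; Lemma~\ref{approxubyz} yields $\vec a_{0,i}\in B(\vec x_i,r_i/2)$ and $\vec z_i\in\A_\beta(B(\vec x_i,r_i))$ with $\vec z_i=\vec u$ outside $B(\vec a_{0,i},r_i/2)$, $\imT(\vec z_i,\Omega)=\imT(\vec u,\Omega)$, and the estimates \eqref{eq: lemconv Wqc}, \eqref{eq: lemconvlpbound}. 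Since each modified sub-ball carries only a $2^{-n}$ fraction of its ambient Vitali ball, the same Vitali step has to be iterated on the unmodified residual set; after countably many iterations this residual has zero measure. The gluing lemma of Section~\ref{sec:cut}, applied inductively (each $\vec z_i$ equals the current ambient function in a neighbourhood of $\partial B(\vec a_{0,i},r_i/2)$), delivers a single $\vec u^\eta\in\A_\beta(\Omega)$ with $\vec u^\eta=\vec u$ on $\partial\Omega$ and $\imT(\vec u^\eta,\Omega)=\imT(\vec u,\Omega)$. Summing \eqref{eq: lemconv Wqc} over all modifications gives
\[
\int_\Omega W(D\vec u^\eta,\vn\circ\vec u^\eta)\,\dd\vec x \leq \int_\Omega W^{qc}(D\vec u,\vn\circ\vec u)\,\dd\vec x + C\eta,
\]
while summing \eqref{eq: lemconvlpbound} with the radii chosen sufficiently small shows $\vec u^\eta\to\vec u$ in $L^A(\Omega,\R^n)$; together with the uniform $L^A$-bound on $D\vec u^\eta$ coming from $(W_2)$ and the above display, this produces $\vec u^\eta\weakc\vec u$ in $W^{1,A}(\Omega,\R^n)$. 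A diagonal extraction with $\eta_j\searrow 0$ yields $\vec u_j:=\vec u^{\eta_j}$.

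Next, I take $\vn_j$ from the recovery sequence provided by Theorem~\ref{th:tqc}(b) on the fixed open set $\Omega^*:=\imT(\vec u,\Omega)=\imT(\vec u_j,\Omega)$; passing to a subsequence realizing the infimum in \eqref{stepb} yields $\vn_j\weakc\vn$ in $W^{1,A}(\Omega^*,\mS^{n-1})$ with $\lim_j\int_{\Omega^*}V(\vn_j,D\vn_j)\,\dd\vec y=\int_{\Omega^*}V^{tqc}(\vn,D\vn)\,\dd\vec y$, which proves \eqref{dis2}. Moreover, the localized auxiliary functional $\mathcal{F}_\infty$ used in the proof of Theorem~\ref{th:tqc}(b) restricts the recovery class to sequences that converge \emph{uniformly} on $\Omega^*$, so I may assume $\|\vn_j-\vn\|_{L^\infty(\Omega^*)}\to 0$. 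To recover \eqref{dis1} with the true coupling $\vn_j\circ\vec u_j$, I apply $(W_3)$ pointwise with $\vec n=\vn\circ\vec u_j(\vec x)$ and $\vec m=\vn_j\circ\vec u_j(\vec x)$:
\[
W(D\vec u_j,\vn_j\circ\vec u_j)\leq \bigl(1+\omega(\|\vn_j-\vn\|_{L^\infty(\Omega^*)})\bigr)\,W(D\vec u_j,\vn\circ\vec u_j).
\]
Since $\omega(\|\vn_j-\vn\|_{L^\infty(\Omega^*)})\to 0$ and the integrals on the right-hand side satisfy the limsup proved in the previous paragraph, a diagonal choice $j=j(\eta)$ establishes \eqref{dis1}. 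The $L^1$-convergence $(\vec u_j,\vn_j)\to(\vec u,\vn)$ then follows from the strong $L^A$-convergences already obtained.

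The main obstacle I foresee is the iterated Vitali/gluing step: one has to check at every stage that the glued function still lies in $\A_\beta(\Omega)$ (handled by Lemma~\ref{lem3.10} together with the cut-and-paste result of Section~\ref{sec:cut}), that $\imT(\vec u,\Omega)$ is preserved exactly (built into Lemma~\ref{approxubyz}), and that the residual unmodified strip -- where the ambient $\vec u$ is retained with energy density $W(D\vec u,\vn\circ\vec u)$ rather than $W^{qc}$ -- contributes negligibly to the limsup. This last point relies on $W(D\vec u,\vn\circ\vec u)\in L^1(\Omega)$ and the absolute continuity of its integral, hence on the initial reduction $I^*(\vec u,\vn)<\infty$; a related subtlety is that the nematic recovery sequence must be built so as to converge uniformly, a property that must be read off the construction behind Theorem~\ref{th:tqc}(b).
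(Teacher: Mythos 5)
Your proposal follows the same overall architecture as the paper's proof: Lemma~\ref{approxubyz} applied around Lebesgue points of $D\vec u$, $\vn\circ\vec u$ and $h(\det D\vec u)$, a Vitali-type disjoint covering, gluing via Lemma~\ref{lem3.10} and the cut-and-paste lemma, Theorem~\ref{th:tqc}(b) for the nematic term, and $(W_3)$ to pass from $\vn\circ\vec u_j$ to $\vn_j\circ\vec u_j$. Two execution points differ, and the first is a genuine gap as written. You propose, for fixed $\eta$, to iterate the Vitali selection \emph{countably} many times until the unmodified residual is null, and only then diagonalize over $\eta_j\searrow0$. But the cut-and-paste lemma and Lemma~\ref{lem3.10} only cover finitely many gluings; after countably many modifications you would have to pass to a limit and re-verify membership in $\A_{\beta}(\Omega)$ (in particular $\mathcal{E}_\Omega=0$, the cofactor integrability, and the preservation of $\imT(\cdot,\Omega)$), which is not automatic and is nowhere addressed. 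The paper avoids this by interleaving the two indices: at stage $j$ it extracts a \emph{finite} disjoint family of balls of radius less than $\frac{1}{C_1}\eta^j$ covering at least half of the current residual $\Omega_j$, applies Lemma~\ref{approxubyz} only there, and keeps the previous function on $\Omega_{j+1}$, so that $|\Omega_j|\le(1-2^{-n-1})^j|\Omega|\to0$ and the residual's contribution $\int_{\Omega_j}W(D\vec u,\vn\circ\vec u)\,\dd\vec x$ vanishes by absolute continuity of the integral of the (integrable) unrelaxed density. Your closing paragraph invokes precisely this absolute-continuity mechanism, which is only relevant when the residual has positive measure, so the repair is simply to adopt the finite-stage interleaved scheme rather than a full countable exhaustion.

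The second difference concerns the coupling of $\vn_j$ with $\vec u_j$. You extract uniform convergence $\vn_j\to\vn$ from the \emph{proof} of Theorem~\ref{th:tqc}(b) (via the auxiliary functional $\mathcal{F}_\infty$) and apply $(W_3)$ with the $L^\infty$ modulus. This can be made to work, but $\omega$ in $(W_3)$ is only a bounded Borel function, not monotone, so you must replace it by $\tilde\omega(s):=\sup_{0<t\le s}\omega(t)$ before bounding $\omega(\|\vn_j\circ\vec u_j-\vn\circ\vec u_j\|)$ in terms of $\|\vn_j-\vn\|_{L^\infty}$, and you are relying on a property of the recovery sequence that is not part of the statement of Theorem~\ref{th:tqc}(b). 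The paper's route is lighter and only uses the statement: $\vn_k\to\vn$ a.e.\ in $\imT(\vec u,\Omega)$, Lusin's $N^{-1}$ condition for $\vec u_j$ (a consequence of $\det D\vec u_j>0$ a.e.) gives $\vn_k\circ\vec u_j\to\vn\circ\vec u_j$ a.e.\ in $\Omega$ for each fixed $j$, and dominated convergence applied to $\bigl(\omega(\|\vn_k\circ\vec u_j-\vn\circ\vec u_j\|)+1\bigr)W(D\vec u_j,\vn\circ\vec u_j)$ allows a diagonal choice $k_j$. With these two adjustments your argument matches the paper's.
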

\proof

The proof of \eqref{dis2} is immediate. Indeed, by virtue of Theorem~\ref{th:tqc} there exists a sequence $\{\vn_k\}_{k \in \N}\subset W^{1,A}(\imT(\vec u,\Omega),\mS^{n-1})$ such that $\vn_k\weakly\vn$ in $W^{1,A}(\imT(\vec u,\Omega),\mS^{n-1})$ and 
\[\lim_{k\to\infty} \int_{\imT(\vec u,\Omega)}V(\vn_k(\vec y), D\vn_k(\vec y)) \, \dd \vec y =  \int_{\imT(\vec u,\Omega)} V^{tqc}(\vn(\vec y),D\vn(\vec y)) \, \dd \vec y. \]
\\
As for \eqref{dis1}, we note that if $\int_{\Omega}W^{qc}(D\vec u,\vn\circ \vec u)\, \dd \vec x=\infty$, we can choose the constant sequence $\vec u_j=\vec u$. Thus, from now on we will assume that $W^{qc}(D\vec u,\vn\circ \vec u)$ is integrable on $\Omega$. We preliminarly prove the following Claim.\\

{\bf Claim:} Let $\{\eta_j\}_{j\in\N}\subset (0,1)$ be a nonincreasing sequence of numbers such that $\eta_j\searrow0$ as $j\to+\infty$. Assume that, for every $j\in\N$, there exists $\vec u_j\in\A_{{\beta}}(\Omega)$ such that 
\begin{equation}
\|\vec u_j - \vec u\|_{L^A(\Omega)}\leq \eta_j, 
\label{near}
\end{equation}
$\vec u_j = \vec u$ on $\partial\Omega$, $\imT(\vec u_j,\Omega)=\imT(\vec u,\Omega)$ and 
\begin{equation}\label{eq:convujiwlewqcseq}
\int_{\Omega}W(D\vec u_j,\vn\circ \vec u_j)\, \dd \vec x\leq \int_{\Omega}W^{qc}(D\vec u,\vn\circ \vec u)\, \dd \vec x+\eta_j.
\end{equation}
Let $\{\vec n_k\}$ be the sequence defined above. Then, up to a subsequence, \eqref{dis1} holds.\\
\emph{Proof of Claim:} First, \eqref{near} implies that $\vec u_j\to \vec u$ in $L^A(\Omega)$ and, with \eqref{eq:convujiwlewqcseq} and \eqref{condW}, we can deduce the uniform bound $\sup_{j \in \N} \|\vec u_j\|_{W^{1,A}} < \infty$. Thus, up to a subsequence (not relabeled), $\vec u_j\weakly \vec u$ in $W^{1,A} (\Omega, \R^n)$. 
On the other hand, since $\vn_k\to \vn$ a.e.\ in $\imT(\vec u,\Omega)$ and $\vec u_j$ satisfies Lusin's $N^{-1}$ condition of Definition~\ref{Ncond} (this is a consequence of the fact that $\det D \vec u_j > 0$ a.e. in $\Omega$; see, e.g., \cite[Lemma~2.8(c)]{HBMC}), for every $j\in\N$ we have $\vn_k\circ \vec u_j\to \vn\circ \vec u_j$ a.e.\ in $\Omega$ as $k\to\infty$. Therefore, using ($W_3$) we obtain
\begin{equation}
\int_{\Omega}W(D\vec u_j,\vn_k\circ \vec u_j)\, \dd \vec x\le \int_{\Omega} \left( \omega(\|\vn_k\circ \vec u_j-\vn\circ \vec u_j\|)+1 \right) W(D\vec u_j,\vn\circ \vec u_j)\, \dd \vec x.
\label{boundd}
\end{equation}
Now, an application of the Theorem of dominated convergence to \eqref{boundd} gives
\[
 \limsup_{k \to \infty} \int_{\Omega} W(D\vec u_j,\vn_k\circ \vec u_j)\, \dd \vec x\leq \int_{\Omega} W(D\vec u_j,\vn\circ \vec u_j)\, \dd \vec x ,
\]
so that for each $j\in\N$ we can take $k_j \in \N$ large enough to have, with \eqref{eq:convujiwlewqcseq}, 
\[\int_{\Omega}W(D\vec u_j,\vn_{k_{j}}\circ \vec u_j)\, \dd \vec x\leq \int_{\Omega}W^{qc}(D\vec u,\vn\circ \vec u)\, \dd \vec x+2\eta_j.\]
Therefore, relabelling the sequence $\{\vn_j\}_{j\in\N}$ we have
\[\limsup_{j\to\infty}\int_{\Omega}W(D\vec u_{j},\vn_j\circ \vec u_j)\, \dd \vec x\leq \int_{\Omega}W^{qc}(D\vec u,\vn \circ \vec u)\, \dd \vec x,\]
and the proof of Claim is complete.\\

From \eqref{condW}, the convexity of $A$ and $h$ and the definition of $W^{qc}$ we have
\[\frac{1}{c_W}A(\|D\vec u\|)+\frac{1}{c_W}h(\det D\vec u)-c_W\leq W^{qc}(D\vec u,\vec m)\text{ for all }
\vec m\in \mS^{n-1} ,\]
since the left-hand side of the inequality above is polyconvex and then quasiconvex. With $W^{qc}(D\vec u,\vn\circ \vec u)\in L^1(\Omega)$, we deduce, in particular, that $A(\|D\vec u\|)$ and $h(\det D\vec u)$ are integrable. On the other hand, we have $\vn\circ \vec u\in L^{\infty}(\Omega, \mS^{n-1})$, since $\vn \circ \vec u$ is measurable in view of \cite[Lemma~7.7]{HBMC}. 

Now we focus on the construction of the sequence $\{\vec u_j\}$, whose existence was assumed in the statement of the Claim.
Denoting by $E$ the intersection of the sets of Lebesgue points of $D\vec u$, $\vn\circ \vec u$ and $h(\det D\vec u)$,  we apply Lemma~\ref{approxubyz} to every point in $E$. For this, given $\vec x\in E$, we fix $\vec F_{\vec x}:=D\vec u(\vec x)$ and $\vec m_{\vec x}:=\vn\circ \vec u(\vec x)$,  and choose $\delta_{\vec x}$ as in Lemma~\ref{approxubyz} for these $\vec F_{\vec x}$, $\vec m_{\vec x}$ and $\eta\in(0,1)$.

Setting $\vec u_0:=\vec u$ and $\Omega_0:=\Omega$, we will construct by induction a sequence $\{(\vec u_j,\Omega_j)\}_{j\in\N}$ such that, for every $j\in\N$,\\
(i) $\vec u_j\in \A_{{\beta}}(\Omega)$;\\
(ii) $\Omega_j\subset\Omega$, $\Omega_{j}\subset\Omega_{j-1}$;\\ 
(iii) $\vec u_j=\vec u$ on $\Omega_j$;\\
(iv) $\imT(\vec u_j,\Omega)=\imT(\vec u,\Omega)$.\\
Assume that the sequence $(\vec u_j,\Omega_j)$ has been constructed until some $j\geq1$. Then $(\vec u_{j+1},\Omega_{j+1})$ is defined from $(\vec u_j,\Omega_j)$ as follows.

   For all $\vec x\in E\cap\Omega_j$ we choose $r_j(\vec x)\in (0,\frac{1}{C_1}\eta^j)$ such that $B(\vec x, r_j(\vec x))\subset\Omega_j$, $\vec u^* \in W^{1,A} (\partial B(\vec x, r_j(\vec x)), \R^n)$ defined by Proposition~\ref{pr:fine}(ii) and 
\[\fint_{B(\vec x,r)}\left(A(\|D\vec u_j(\vec x')-\vec F_{\vec x}\|)+|h(\det D\vec u_j(\vec x'))-h(\det \vec F_{\vec x})|+A(\|\vn\circ \vec u_j(\vec x')-\vec m_{\vec x}\|)\right) \dd \vec x'\leq \delta_{\vec x}\]
for all $r<r_j(\vec x)$. The union of this collection of balls $B(\vec x,r_j(\vec x))$ covers $\Omega_j$ up to a null set. From this covering, we extract a finite disjoint family $\{B(\vec x_k,r_k)\}_{k=0}^{M}$ such that 
\[\left|\bigcup_{k=0}^{M}B(\vec x_k,r_k) \right|\ge \frac{1}{2}|\Omega_j|.\]
We define $\vec u_{j+1}=\vec u_j$ on $\Omega\setminus \bigcup_{k=0}^{M}B(\vec x_k,r_k)$ and as the function $\vec z$ of Lemma \ref{approxubyz} in each of the balls $B(\vec x_k,r_k)$. Then $\vec u_{j+1}=\vec u_j=\vec u$ on $\partial\Omega $ and by virtue of Lemma~\ref{lem3.10}, we get $\vec u_{j+1}\in \A_{{\beta}}(\Omega)$.
Now, let $B(\vec x_k',\frac{r_k}{2} )\subset B(\vec x_k,r_k)$ be the ball given by Lemma \ref{approxubyz} and choose an increasing sequence $\{U_i\}_{i\in\N}$ of open sets such that $U_i\ssubset\Omega$, $\bigcup_{i\in\N} U_{i}=\Omega$, $\bigcup_{k=0}^{M}B(\vec x_k,r_k) \subset U_1$ and $\vec u^*_j \in W^{1,A} (\partial U_i, \R^n)$ for all $i \in \N$.
Then, $\vec u_j$ and $\vec u_{j+1}$ coincide in a neighbourhood of each $\partial U_i$, so that $\vec u^*_{j+1} \in W^{1,A} (\partial U_i, \R^n)$ and $\imT(\vec u_j,U_i) = \imT(\vec u_{j+1},U_i)$, since the degree only depends on the boundary values.

Therefore, $\imT(\vec u_j,\Omega) = \imT(\vec u_{j+1},\Omega)$, and, by applying (iv) iteratively, $\imT(\vec u_{j+1},\Omega)=\imT(\vec u,\Omega)$.
 
Moreover, by Lemma \ref{approxubyz} applied with $\frac{1}{C_1}\eta^{j+1}$ in place of $\eta$, we obtain
 \begin{equation}\label{eq:sucesuj WleWqc}
 \int_{B\left(\vec x_k',\frac{r_k}{2}\right)}W(D\vec u_{j+1},\vn\circ \vec u_{j+1})\, \dd \vec x\le \int_{B\left(\vec x_k',\frac{r_k}{2}\right)}\left(W^{qc}(D\vec u,\vn\circ \vec u)+\eta^{j+1}\right) \dd \vec x
 \end{equation}
 and 
  \begin{equation}\label{eq:sucesuj LpleWqc}
 \int_{B\left(\vec x_k,r_k\right)}A(\|\vec u_{j+1}-\vec u\|)\, \dd \vec x\leq C\int_{B(\vec x_k,r_k)}\left(W^{qc}(D\vec u,\vn\circ \vec u)+1\right) \dd \vec x.
 \end{equation}
We set $\Omega_{j+1}=\Omega_{j}\setminus \bigcup_{k=0}^{M}\overline{{B}\left(\vec x_k',\frac{r_k}{2}\right)}$.
Clearly, $\vec u_{j+1}=\vec u_j=\vec u$ on $\Omega_{j+1}$, $\Omega_{j+1}\subset\Omega_j$ and it holds that $|\Omega_{j+1}|\le (1-2^{-n-1})|\Omega_j|$. In particular,
\begin{equation}
|\Omega_j|\le (1-2^{-n-1})^{j}|\Omega|\to 0.
\label{eqn:measzero}
\end{equation}
Thus, $(\vec u_{j+1},\Omega_{j+1})$ complies with (i)-(iv) above and the construction is complete.

Now, we are left to show that for $j$ large enough, $\vec u_j$ satisfies \eqref{eq:convujiwlewqcseq} and that $\|\vec u_j-\vec u\|_{L^{A}(\Omega)}$ is uniformly small.
As for the latter, from \eqref{eq:sucesuj LpleWqc} we have 
\[\int_{\Omega}A(\|\vec u_{j}-\vec u\|)\, \dd \vec x\leq C \int_{\Omega}\left(W^{qc}(D\vec u,\vn\circ \vec u)+1\right) \dd \vec x , \]
so $\vec u_j$ is close to $\vec u$ in $L^A(\Omega)$, independently of $j$. 
On the other hand, from \eqref{eq:sucesuj WleWqc} we obtain
\[ \int_{\Omega\setminus\Omega_j}W(D\vec u_{j+1},\vn\circ \vec u_{j+1})\, \dd \vec x\leq \int_{\Omega\setminus\Omega_j}\left(W^{qc}(D\vec u,\vn\circ \vec u)+\eta^{j+1}\right) \dd \vec x,\]
which implies
 \[ \int_{\Omega}W(D\vec u_{j+1},\vn\circ \vec u_{j+1})\, \dd \vec x\leq \int_{\Omega\setminus \Omega_j}\left(W^{qc}(D\vec u,\vn\circ \vec u)+\eta^{j+1}\right) \dd \vec x+\int_{\Omega_j}W(D\vec u,\vn\circ \vec u)\, \dd \vec x.\]
 Using \eqref{eqn:measzero} and the fact that, from \eqref{condW}, $W(D\vec u,\vn\circ \vec u)$ is integrable (since $A(\|D\vec u\|)$ and $h(\det D\vec u)$ are integrable), for $j$ large enough we get 
 \begin{equation*}
 \int_{\Omega}W(D\vec u_{j+1},\vn\circ \vec u_{j+1})\, \dd \vec x\leq \int_{\Omega}\left(W^{qc}(D\vec u,\vn\circ \vec u)+2\eta^{j+1}\right) \dd \vec x
 \end{equation*}
and the proof is concluded.

\endproof

\subsection{Relaxation}

The following general abstract result (see, e.g., \cite[Proposition~11.1.1, Theorem~11.1.1]{AtBuMi06}) will allow us to identify $I^*$ with the lower semicontinuous envelope of the energy $I$ with respect to the $L^1 (\Omega, \R) \times L^1 (\R^n, \R^n)$ topology.

\begin{prop}\label{prop11.1.1}
The function defined, for every $(\vec u,\vn) \in L^1 (\Omega, \R) \times L^1 (\R^n, \R^n)$, as
\begin{equation}
 \displaystyle \bar{I}(\vec u,\vn)= \inf \left\{ \liminf_{j\to\infty} I(\vec u_j,\vn_j) : (\vec u_j, \vn_j) \to (\vec u, \vn) \text{ as } j \to \infty \text{ in } L^1 (\Omega, \R) \times L^1 (\R^n, \R^n) \right\}
\label{envelop}
\end{equation}
is the lower semicontinuous envelope of $I$ with respect to the $L^1 (\Omega, \R) \times L^1 (\R^n, \R^n)$ topology; i.e., the greatest lower semicontinuous function less than $I$. Moreover, the function $\bar{I}$ is characterized by the following assertions:
\\
\begin{description}
\item[(i)]  for every $(\vec u_j, \vn_j) \to (\vec u, \vn)$,\quad $\bar{I} (\vec u, \vn) \leq \displaystyle\liminf_{j \to \infty} I (\vec u_j, \vn_j)$;\\
\item[(ii)] there exists a sequence $(\bar{\vec u}_j, \bar{\vn}_j) \to (\vec u, \vn)$ such that $\displaystyle\limsup_{j\to\infty} I (\bar{\vec u}_j, \bar{\vn}_j) \leq \bar{I}(\vec u, \vn)$.
\end{description}
\end{prop}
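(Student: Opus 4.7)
The plan is to establish the two characterizing properties (i)--(ii) and then identify $\bar I$ with the lower semicontinuous envelope by checking that $\bar I \le I$, that $\bar I$ is lower semicontinuous, and that it is maximal among lsc functions dominated by $I$. I would rely on the metrizability of $L^1(\Omega,\R^n)\times L^1(\R^n,\R^n)$, which makes standard sequential diagonal extractions legitimate; denote the product metric by $d$.

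Property (i) is immediate from the definition: the infimum in \eqref{envelop} is taken over all converging sequences, so for any particular sequence $(\vec u_j,\vn_j)\to(\vec u,\vn)$ one has $\bar I(\vec u,\vn)\le \liminf_{j\to\infty} I(\vec u_j,\vn_j)$. The bound $\bar I\le I$ is obtained by plugging the constant sequence $(\vec u_j,\vn_j)\equiv(\vec u,\vn)$ into the infimum.

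For property (ii), I argue by diagonalization. For each $k\in\N$, by definition of the infimum I can pick a sequence $\{(\vec u_j^k,\vn_j^k)\}_j$ converging to $(\vec u,\vn)$ in $L^1\times L^1$ and satisfying $\liminf_j I(\vec u_j^k,\vn_j^k) \le \bar I(\vec u,\vn) + 1/k$ (or diverging to $-\infty$ in the degenerate case, which can be handled analogously). Choose $j_k$ large enough that simultaneously $d((\vec u_{j_k}^k,\vn_{j_k}^k),(\vec u,\vn))<1/k$ and $I(\vec u_{j_k}^k,\vn_{j_k}^k)\le \bar I(\vec u,\vn)+2/k$. Then $(\bar{\vec u}_k,\bar{\vn}_k):=(\vec u_{j_k}^k,\vn_{j_k}^k)$ converges to $(\vec u,\vn)$ and $\limsup_k I(\bar{\vec u}_k,\bar{\vn}_k)\le \bar I(\vec u,\vn)$, as required.

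It remains to check that $\bar I$ is lower semicontinuous and maximal. For lower semicontinuity, I pick $(\vec u_k,\vn_k)\to(\vec u,\vn)$, assume without loss of generality that $\liminf_k \bar I(\vec u_k,\vn_k)$ is a finite limit, and apply (ii) at each $(\vec u_k,\vn_k)$ to obtain sequences $\{(\bar{\vec u}_j^k,\bar{\vn}_j^k)\}_j\to(\vec u_k,\vn_k)$ with $\limsup_j I(\bar{\vec u}_j^k,\bar{\vn}_j^k)\le \bar I(\vec u_k,\vn_k)$. A standard diagonal extraction yields a sequence converging to $(\vec u,\vn)$ whose $\liminf$ of $I$-values does not exceed $\liminf_k \bar I(\vec u_k,\vn_k)$; invoking (i) at $(\vec u,\vn)$ gives $\bar I(\vec u,\vn)\le\liminf_k \bar I(\vec u_k,\vn_k)$. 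For maximality, if $G$ is lsc and $G\le I$, then for any sequence $(\vec u_j,\vn_j)\to(\vec u,\vn)$ one has $G(\vec u,\vn)\le \liminf_j G(\vec u_j,\vn_j)\le \liminf_j I(\vec u_j,\vn_j)$; taking the infimum over sequences yields $G\le \bar I$.

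The main obstacle is the lower semicontinuity of $\bar I$: unlike (i) and the maximality step, which are nearly tautological, this requires the diagonal extraction to be compatible with the metric topology. This is why metrizability of $L^1\times L^1$ is the essential structural hypothesis; once it is in place the diagonal lemma (e.g.\ Attouch's lemma) supplies the conclusion.
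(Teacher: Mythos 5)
Your proof is correct. Note that the paper does not prove this proposition at all: it is quoted as a general abstract result with a reference to Attouch--Buttazzo--Michaille, and your argument is essentially the standard proof found there --- the tautological verification of (i) and of $\bar I\le I$, the diagonal selection for (ii), the diagonal argument for lower semicontinuity of $\bar I$, and the one-line maximality step, all resting on metrizability (more precisely, first countability) of $L^1\times L^1$. Two harmless simplifications you could make explicit: since $I$ takes values in $[0,+\infty]$, the degenerate $-\infty$ case in your construction of (ii) never occurs; and the claim that (i)--(ii) \emph{characterize} $\bar I$ follows immediately from what you prove, since any $J$ satisfying (i) is $\le\bar I$ by taking the infimum over sequences, while any $J$ satisfying (ii) dominates $\bar I$ by applying the definition of $\bar I$ to the sequence furnished by (ii).
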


Since, by definition, $I^*(\vec u, \vn)\leq I(\vec u, \vn)$,  Proposition~\ref{prop11.1.1} with Proposition~\ref{prop:lowersemicont}(ii) and Theorem~\ref{recseq} imply that $I^*(\vec u, \vn)=\bar{I}(\vec u, \vn)$. This result, combined with the compactness theorem (Proposition~\ref{prop:lowersemicont}(i)) and the general relaxation theorem in countable topological spaces (see, e.g., \cite[Theorem~11.1.2]{AtBuMi06}) gives the following final relaxation theorem.

\begin{thm}\label{relaxation}
Let $W$ satisfy $(W_1)$-$(W_3)$ and let $V$ satisfy $(V_1)$. Assume $W^{qc}$ is polyconvex.
Then $I^*$ is the lower semicontinuous envelope of $I$ with respect to the $L^1 (\Omega, \R) \times L^1 (\R^n, \R^n)$ topology and coincides with \eqref{envelop}. If, in addition, $I\not\equiv +\infty$, then 
\\
\begin{description}
\item[(a)] $I^*$ admits a minimizer;

\item[(b)] For every minimizer $(\bar{\vec u}, \bar{\vn})$ of $I^*$, there exists $(\vec u_j,\vn_j)$ a minimizing sequence for $I$ such that $(\vec u_j, \vn_j) \to (\bar{\vec u}, \bar{\vn})$ in $L^1 (\Omega, \R) \times L^1 (\R^n, \R^n)$;

\item[(c)] Every minimizing sequence $(\vec u_j,\vn_j)$ of $I$ converges, up to a subsequence in $L^1 (\Omega, \R) \times L^1 (\R^n, \R^n)$, to a minimizer $(\bar{\vec u}, \bar{\vn})$ of $I^*$.
\end{description}
\end{thm}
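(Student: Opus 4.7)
The plan is to identify $I^*$ with the lower semicontinuous envelope $\bar I$ of $I$ via Proposition~\ref{prop11.1.1}, after which the minimization statements (a)--(c) follow by combining the compactness assertion of Proposition~\ref{prop:lowersemicont}(i) with the general abstract relaxation theorem \cite[Theorem~11.1.2]{AtBuMi06}.

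To establish $I^*=\bar I$, I would verify the two characterizing conditions of Proposition~\ref{prop11.1.1}. The limsup condition (ii) is provided directly by Theorem~\ref{recseq}, which exhibits a sequence $(\vec u_j,\vn_j)\to(\vec u,\vn)$ in $L^1\times L^1$ satisfying $\limsup_j I(\vec u_j,\vn_j)\leq I^*(\vec u,\vn)$. The liminf condition (i) is obtained by applying Proposition~\ref{prop:lowersemicont}(ii) to $I^*$ in place of $I$: under the added assumption that $W^{qc}$ is polyconvex, the pair $(W^{qc},V^{tqc})$ fits the hypotheses of that proposition, since $V^{tqc}$ is tangentially quasiconvex by construction, its $A$-growth upper bound is inherited from $(V_1)$, and the coercive lower bound in $(W_2)$ passes to $W^{qc}$ because $c_W^{-1}(A(\|\vec F\|)+h(\det\vec F))-c_W$ is already polyconvex, hence a quasiconvex minorant of $W$, and therefore a minorant of $W^{qc}$. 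Combined with the pointwise inequality $I^*\leq I$ this yields
\begin{equation*}
I^*(\vec u,\vn)\leq\liminf_j I^*(\vec u_j,\vn_j)\leq\liminf_j I(\vec u_j,\vn_j),
\end{equation*}
which is (i). Proposition~\ref{prop11.1.1} then gives $I^*=\bar I$, so $I^*$ coincides with the envelope \eqref{envelop}.

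For (a), since $I\not\equiv+\infty$ and $I^*\leq I$, any minimizing sequence $\{(\vec v_j,\vec w_j)\}$ for $I^*$ satisfies $\sup_j I^*(\vec v_j,\vec w_j)<+\infty$; applying Proposition~\ref{prop:lowersemicont}(i) to $I^*$ (whose hypotheses have just been checked) and then its lower semicontinuity produces a minimizer $(\bar{\vec u},\bar{\vn})\in\B$. For (b) and (c) I would first show that $\min I^*=\inf I$: the inequality $\min I^*\leq\inf I$ is immediate from $I^*\leq I$, while for any minimizer $(\bar{\vec u},\bar{\vn})$ of $I^*$ the recovery sequence of Theorem~\ref{recseq} yields $\inf I\leq\limsup_j I(\vec u_j,\vn_j)\leq I^*(\bar{\vec u},\bar{\vn})=\min I^*$. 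Consequently that recovery sequence is itself a minimizing sequence for $I$ converging in $L^1\times L^1$ to $(\bar{\vec u},\bar{\vn})$, proving (b). For (c), any minimizing sequence $\{(\vec u_j,\vn_j)\}$ for $I$ has bounded $I$-energy, so Proposition~\ref{prop:lowersemicont}(i) extracts an $L^1\times L^1$-convergent subsequence with limit $(\bar{\vec u},\bar{\vn})\in\B$, and the lower semicontinuity of $I^*=\bar I$ gives $I^*(\bar{\vec u},\bar{\vn})\leq\liminf_j I(\vec u_j,\vn_j)=\min I^*$, showing that $(\bar{\vec u},\bar{\vn})$ minimizes $I^*$.

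The main obstacle is not the assembly above, which is the standard \emph{soft} part of a relaxation argument, but lies in the preceding sections: the recovery-sequence construction in Proposition~\ref{prop:limsup} built on the composition Lemma~\ref{approxubyz}, together with the Orlicz--Sobolev lower-semicontinuity and compactness theory behind Proposition~\ref{prop:lowersemicont}, are where the real technical work takes place. At the present stage the only genuinely new task is the transfer of the growth and coercivity conditions from $W,V$ to $W^{qc},V^{tqc}$, which is routine given the assumed polyconvexity of $W^{qc}$ and the built-in quasiconvexity of the bounding terms.
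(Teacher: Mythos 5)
Your proposal is correct and follows essentially the same route as the paper: identify $I^*$ with $\bar I$ via Proposition~\ref{prop11.1.1}, using $I^*\leq I$ together with Proposition~\ref{prop:lowersemicont}(ii) applied to $(W^{qc},V^{tqc})$ for the liminf inequality and Theorem~\ref{recseq} for the limsup inequality, then deduce (a)--(c) from compactness and the abstract relaxation theorem. Your explicit verification that the growth and coercivity hypotheses transfer to $W^{qc}$ and $V^{tqc}$ fills in details the paper leaves implicit, but the argument is the same.
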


\section*{Acknowledgements}

{The authors are members of Gruppo Nazionale per l’Analisi Matematica, la Probabilità e le loro Applicazioni (GNAMPA) of INdAM.} The research of B. S.\ was supported by University of Naples Project VAriational TECHniques in Advanced MATErials (VATEXMATE) {and by PRIN Project 2017TEXA3H.} The project was carried out during the visit of B. S. to the Isaac Newton Institute for Mathematical Sciences. She would like to thank the Isaac Newton Institute for Mathematical Sciences for support and hospitality during the programme \lq\lq ~The mathematical design of new materials \rq \rq  when work on this paper was undertaken. This work was supported by: EPSRC grant number EP/R014604/1". G. S. was supported by the Italian Ministry of Education, University and Research through the Project “Variational methods for stationary and evolution problems with singularities and interfaces” (PRIN 2017). The authors gratefully acknowledge the anonymous referee for a careful reading of the paper and for the interesting remarks leading to improvements of the manuscript.

\Addresses

\end{document}